\documentclass[11pt]{article}
\usepackage{amsmath,amssymb,amsthm}
\usepackage[utf8]{inputenc}
\usepackage{mathtools}
\usepackage{graphicx}

\newtheorem{thm}{Theorem}
\newtheorem{lem}{Lemma}
\newtheorem{cor}{Corollary}
\newtheorem{prop}{Proposition}
\newtheorem{rem}{Remark}
\newtheorem{defi}{Definition}
\newtheorem*{defi*}{Definitions}
\newtheorem{question}{Question}

\newcommand{\ds}{\displaystyle}
\newcommand{\eps}{\varepsilon}
\newcommand{\ep}{\epsilon}
\newcommand{\R}{\mathbb{R}}

\newcommand{\N}{\mathbb{N}}
\newcommand{\Z}{\mathbb{Z}}
\newcommand{\T}{\mathbb{T}}
\newcommand{\mb}{\mathversion{bold}}
\newcommand{\mn}{\mathversion{normal}}
\def\rest{\hskip 1pt{\hbox to 10.8pt{\hfill
\vrule height 7pt width 0.4pt depth 0pt\hbox{\vrule height 0.4pt
width 7.6pt depth 0pt}\hfill}}}
\def\evalu{\hskip 1pt{\hbox to 2pt{\hfill \vrule height -6pt width 0.4pt depth
0pt}}}
\def\barint{\mathop{\vrule width 6pt height 3 pt depth -2.5pt \kern -8.8pt
\intop}}

\setlength{\textwidth}{15.5cm}
\setlength{\oddsidemargin}{.5cm}
\setlength{\evensidemargin}{0.5cm}
\setlength{\textheight}{22.5cm}
\setlength{\topmargin}{-1cm}
\setlength{\footskip}{1.5cm}

\title{On the motion of a curve by its binormal curvature}
\author{
{\sc Robert L. Jerrard}
 \ \& \ {\sc Didier Smets}
}

\date{}
\begin{document}

\maketitle

\begin{flushright}
``{\it I confess, I am skeptical about the stability of many\\
of the motions which you appear to contemplate.}''

\smallskip

Stokes, letter to Kelvin, 1873. 
\end{flushright}

\begin{abstract} We propose a weak formulation for the binormal curvature flow of curves in $\R^3.$  This formulation
 is sufficiently broad  to consider integral currents as initial data, and sufficiently strong for the 
weak-strong uniqueness property to hold, as long as self-intersections do not occur. We also prove a global 
existence theorem in that framework.
\end{abstract}

\section{Introduction}                                              %
The binormal curvature flow equation for a smooth family $(\gamma_t)_{t\in I}$ of curves in $\R^3$ 
is traditionally written in terms of an arc-length parametrization $\gamma\: : \: I \times \R \to \R^3$ 
by 
\begin{equation}\label{eq:strongbfbis}
\partial_t \gamma = \partial_s \gamma \times \partial_{ss}\gamma 
\end{equation}
where $t\in I$ is the time variable, $s\in \R$ is the arc-length parameter, and $\times$ denotes the 
vector product in $\R^3.$ The arc-length parametrization condition 
\begin{equation}\label{eq:arc}
|\partial_s \gamma(t,s)|^2 = 1
\end{equation}
is indeed compatible with equation \eqref{eq:strongbfbis}, since 
$$
\partial_t \bigl( |\partial_s \gamma|^2\bigr) = 2 \partial_s\gamma \cdot
\partial_{st}\gamma = 2 \partial_s\gamma \cdot \bigl( \partial_s\gamma \times
\partial_{sss} \gamma\bigr) =0
$$  
whenever \eqref{eq:strongbfbis} is satisfied, at least for sufficiently smooth solutions. In particular,
 closed curves evolved by the binormal curvature flow equation \eqref{eq:strongbfbis} all have constant 
length. In more geometric terms, equation \eqref{eq:strongbfbis} takes its name from its equivalent form   
$$
\partial_t \gamma = \kappa b
$$
where $\kappa$ and $b$ are the curvature function and the binormal
vector field along $\gamma_t$ respectively.  

\medskip

It seems that equation \eqref{eq:strongbfbis} first appeared in the 1906 Ph.D. thesis of L.S. Da Rios 
\cite{DaR},  whose work was promoted in a series of lectures in 1931 in Paris by its advisor 
T. Levi-Civita \cite{LeC}. The problem considered by Da Rios and Levi-Civita goes back to the celebrated
1858 paper of H. Helmholtz \cite{Hel1} on the motion of a three dimensional incompressible fluid in
rotation.  Special attention was paid in the second part of \cite{Hel1} to configurations called 
``unendlich kleine Querschnitts'', and translated in \cite{Hel2} by vortex-filaments of indefinitely small
cross-section: in such configurations, 
 the vorticity field $\omega := {\rm
curl}(v) $ associated to the velocity field $v$ of the fluid at a given time $t$ is concentrated along a 
closed oriented curve $\gamma_t$, parallel to it and vanishing rapidly away from it, so that 
$$
\int_{\R^3} X(x)\cdot \omega(x,t)\, dx \simeq \int_{\gamma_t} X \cdot \tau_{\gamma_t} d\mathcal{H}^1
$$
in some appropriate sense for any vector field $X\in \mathcal{D}(\R^3,\R^3).$ Helmholtz, like everybody since, 
failed to rigorously answer the question of the persistence in time of such vortex-filaments under the Euler 
flow
$$
\partial_t \omega + v\cdot \nabla \omega = \omega\cdot \nabla \omega.
$$
Nevertheless, he obtained a number of important contributions in that direction, as well as suggestive evidences, 
which conducted him to study the question of the corresponding asymptotic motion law for the 
underlying curves $\gamma_t$ in case of positive answer to the previous question. Because of mathematical 
obstacles related to the singularity of the Biot-Savart kernel involved in the reconstruction of $v$ 
from $\omega$ when considering such vorticity measures, Helmholtz essentially restricted his mathematical 
study to the case of straight or circular vortex-filaments, or combinations of those. Pursuing Helmholtz work, 
Lord Kelvin announced in 1867 \cite{Kel1} and published in 1880 \cite{Kel2} the first result on linear 
stability of circular vortex-filaments.  The latter, also called vortex rings, correspond in the
asymptotic of infinitely small cross-section to the traveling wave solutions of equation \eqref{eq:strongbfbis} 
given by 
$$
\gamma(t,s) = \gamma_{r,\vec e}(s) + \frac{t}{r}\vec e, 
$$
where $\gamma_{r,\vec e}$ is an arc-length parametrization of a circle of radius $r$ in a plane perpendicular to
the unitary vector $\vec e \in \R^3.$ Kelvin carefully described the neutral modes involved in small perturbations of
such configurations, and which are referred today as Kelvin waves. J.J. Thomson 1883 treatise \cite{Tho} 
and H. Poincar\'e 1893 lectures notes \cite{Poi} are also important sources regarding the state of the art for 
vortex-filaments motion in incompressible fluids by the end of the nineteenth century. As already mentioned, it is
only in 1906 with a careful use of potential theory that Da Rios formally obtained the speculated general motion law
\eqref{eq:strongbfbis}.  

\medskip

Let aside the fact that it has never been rigorously derived from the Euler equations, 
and even though is is globally well-posed for initial data consisting of smooth closed curves, 
formulation \eqref{eq:strongbfbis} for binormal curvature flows has at least two limitations which we would like 
to address. 

\smallskip
First, by essence this formulation is tailored for parametrized curves. In particular, 
and since it involves derivatives with respect to the parameters only, it is necessarily insensitive to 
self-intersections\footnote{By self-intersection of $\gamma_t$ 
we mean failure of injectivity of the map $\gamma(t,\cdot).$} in the curves $\gamma_t.$ This property is surely 
unsatisfactory if one believes that such flows arise as limits from three dimensional fluid dynamics. Instead, 
it would be desirable for a formulation to be able to detect such self-intersections, as well as possible
collisions between elements of disconnected vortex filaments and changes of topology. 
     
Second, there are presumably important configurations of curves which are too singular to be considered under
formulation \eqref{eq:strongbfbis}. Indeed, invoking distributional derivatives one can give a meaning to 
equation \eqref{eq:strongbfbis} in a variety of spaces, but those spaces just fail to include the case of 
curves which are barely Lipschitz. On the other hand, in numerical simulations of
the Euler equation or the Gross-Pitaevskii equation for quantum fluids, it is observed (see e.g.  \cite{KiTa} 
and \cite{KoLe}) that vortex-filaments often tend to recombine by exchanging strands in cases of collisions or 
self-intersections. Those recombinations, when the intersections are transverse, inevitably create 
discontinuities of the tangent vector (see Figure \ref{fig:1} below).   

\begin{figure}[!h]
\begin{center}
\includegraphics{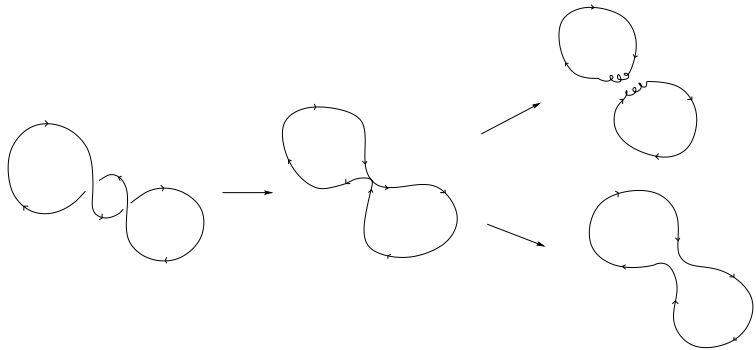}
\caption{Non unique evolution through strands recombination and singularity formations.}
\label{fig:1}
\end{center}
\end{figure}

Our starting point in trying to address these two important limitations is the following identity for smooth
solutions of \eqref{eq:strongbfbis}, which was remarked by the first author in \cite{Je1} in a more general 
context.  

\begin{lem}[\cite{Je1}]\label{lem:weakform}
If $\gamma$ is a smooth solution of \eqref{eq:strongbfbis} on $I\times \T^1$, where $I\subset \R$ is some open
interval and $\T^1 = \R/\ell\mathbb{Z}$ for some $\ell>0,$ then for every vector
field $X\in \mathcal{D}(\R^3,\R^3)$ and every $t\in I$
\begin{equation}
\label{eq:weakbf1}
\frac{d}{dt} \int_{\gamma_t} X \cdot \tau_t \, d\mathcal{H}^1 
= -\int_{\gamma_t} D\left( {\rm curl} X\right): \left( \tau_t \otimes
\tau_t\right)
\, d\mathcal{H}^1,
\end{equation} 
where $\gamma_t\equiv \gamma(t,\cdot)$ and $\tau_t$ is the oriented tangent vector along 
$\gamma_t$. 
\end{lem}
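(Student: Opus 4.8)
The plan is to compute the time derivative directly by parametrizing the integral and pushing the derivative under the integral sign. Since $\gamma$ is a smooth solution of \eqref{eq:strongbfbis} on $I\times\T^1$, condition \eqref{eq:arc} holds for all $t\in I$, so $d\mathcal{H}^1 = ds$ and the tangent vector is simply $\tau_t = \partial_s\gamma$. Thus I would write
\begin{equation*}
\frac{d}{dt}\int_{\gamma_t} X\cdot\tau_t\,d\mathcal{H}^1
= \frac{d}{dt}\int_{\T^1} X(\gamma)\cdot\partial_s\gamma\,ds
= \int_{\T^1}\Bigl( DX(\gamma)\,\partial_t\gamma\cdot\partial_s\gamma + X(\gamma)\cdot\partial_{st}\gamma\Bigr)ds.
\end{equation*}
For the second term I would integrate by parts in $s$ (legitimate since $\T^1$ is closed, so there are no boundary terms) to turn $\partial_{st}\gamma = \partial_s(\partial_t\gamma)$ into $-\partial_s\bigl(X(\gamma)\bigr)\cdot\partial_t\gamma = -(DX(\gamma)\,\partial_s\gamma)\cdot\partial_t\gamma$. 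At this stage both terms carry the factor $\partial_t\gamma = \partial_s\gamma\times\partial_{ss}\gamma$ coming from the equation.

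Substituting and combining, the integrand becomes
\begin{equation*}
\Bigl( DX(\gamma)\,\partial_t\gamma\Bigr)\cdot\partial_s\gamma - \Bigl(DX(\gamma)\,\partial_s\gamma\Bigr)\cdot\partial_t\gamma,
\end{equation*}
which is the contraction of the antisymmetric part of $DX$ against $\partial_t\gamma\otimes\partial_s\gamma$. The key algebraic step is then to recognize that contracting the antisymmetrization of $DX$ against the wedge of two vectors $a=\partial_t\gamma$ and $b=\partial_s\gamma$ produces $\mathrm{curl}(X)\cdot(a\times b)$, up to sign conventions. Here $a\times b = (\partial_s\gamma\times\partial_{ss}\gamma)\times\partial_s\gamma$, which by the vector triple product expansion $(u\times v)\times u = |u|^2 v - (u\cdot v)u$ together with $|\partial_s\gamma|^2=1$ and $\partial_s\gamma\cdot\partial_{ss}\gamma = \tfrac12\partial_s|\partial_s\gamma|^2 = 0$ simplifies to exactly $\partial_{ss}\gamma$. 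So the whole expression reduces to $\mathrm{curl}(X)(\gamma)\cdot\partial_{ss}\gamma$.

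The remaining task is to convert $\int_{\T^1}\mathrm{curl}(X)(\gamma)\cdot\partial_{ss}\gamma\,ds$ into the stated form $-\int\!D(\mathrm{curl}\,X):(\tau_t\otimes\tau_t)$. For this I would integrate by parts once more in $s$: since $\partial_{ss}\gamma = \partial_s(\partial_s\gamma)$, the integral equals $-\int_{\T^1}\partial_s\bigl(\mathrm{curl}(X)(\gamma)\bigr)\cdot\partial_s\gamma\,ds = -\int_{\T^1}\bigl(D(\mathrm{curl}\,X)(\gamma)\,\partial_s\gamma\bigr)\cdot\partial_s\gamma\,ds$, and the integrand is precisely $D(\mathrm{curl}\,X):(\tau_t\otimes\tau_t)$ evaluated along $\gamma_t$. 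Rewriting the arc-length integral back as $\int_{\gamma_t}(\cdots)\,d\mathcal{H}^1$ gives the claim. The main obstacle, such as it is, lies not in any analytic subtlety (everything is smooth and the domain is compact, so differentiation under the integral and all integrations by parts are justified) but in carefully tracking the index and sign conventions in the identity relating the antisymmetric part of $DX$, the curl of $X$, and the cross product, to ensure the final sign matches \eqref{eq:weakbf1}. I would verify this against a coordinate computation of $\mathrm{curl}(X)\cdot(a\times b)$ to fix the orientation convention before finalizing.
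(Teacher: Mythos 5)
Your proof is correct and follows essentially the same route as the paper's: differentiation under the integral, an integration by parts turning the $\partial_{st}\gamma$ term into the antisymmetric contraction of $DX$ against $\partial_t\gamma$ and $\partial_s\gamma$, the identification of that contraction with ${\rm curl}(X)\cdot(\partial_t\gamma\times\partial_s\gamma)$ (which the paper performs in coordinates via the Levi-Civita symbol), the reduction $\partial_t\gamma\times\partial_s\gamma=\partial_{ss}\gamma$ from the equation and the arc-length condition, and a second integration by parts to produce $-D({\rm curl}\,X):(\tau_t\otimes\tau_t)$. The only cosmetic difference is that you phrase the key algebraic step through vector identities while the paper uses index notation, and your sign convention for ${\rm curl}(X)\cdot(a\times b)=(DX\,a)\cdot b-(DX\,b)\cdot a$ indeed checks out, so the final sign matches \eqref{eq:weakbf1}.
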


Notice that for fixed time, both sides of \eqref{eq:weakbf1} involve, in terms of $\gamma$, 
only the tangent vectors $\tau_t,$ and therefore first order derivatives with respect to the
arc-length. This suggests to enlarge the definition of binormal curvature flows through 
an extension of formula \eqref{eq:weakbf1} to one dimensional objects that
have well defined tagent spaces, at least in a measure theoretic sense.
A tentative definition based entirely on integral currents of H. Federer and W. H. Fleming \cite{FeFl}
was first proposed in \cite{Je1}; an existence theory in that framework is still missing.    
The main difficulty in dealing with \eqref{eq:weakbf1} in the framework of currents is that the 
right-hand side doesn't have good continuity properties for the usual topologies associated
to currents, because of the presence of quadratic terms in the tangent vectors. Instead, 
such quantities seem more appropriate to be dealt with using the general framework of 
Young measures, and more specifically varifolds of F. J. Almgren \cite{Alm} and W. K. Allard \cite{All}.  
On the other hand, the left-hand side of \eqref{eq:weakbf1} is more appropriate to currents
than varifolds, in particular because the latter do not have an orientation. The strategy which we adopt
here below tries in a sense to reconcile these two features, building both on integral currents
and on a notion of oriented varifolds which can be viewed as the non-parametric version of what L.C. Young 
\cite{You} and E. J. McShane \cite{McS} called generalized curves.

 
\noindent
{\bf Integral currents.} H. Federer and W. H. Fleming introduced integral currents of arbitrary dimension 
in \cite{FeFl}. One dimensional currents have a simple characterization which we adopt as a definition (see
\cite{Fed} 4.2.25).

A simple closed oriented curve in $\R^3$ is a vector valued distribution 
$T\in \mathcal{D}'(\R^3,\R^3)$ such that there exists a Lipschitz one-to-one function 
$\gamma\::\: \T^1\to \R^3$ verifying
$$
T(X) = \int_{\T^1}X(\gamma(s))\cdot\gamma'(s)\, ds, \qquad \forall X \in \mathcal{D}(\R^3,\R^3).
$$ 
The length of a simple closed oriented curve $T$, denoted by $L(T)$, is given by
$
L(T) := \int_{\T^1} |\gamma'(s)|\, ds,
$
and we have the equality 
$
L(T) = \sup \{ T(X) \ : \ X \in
\mathcal{D}(\R^3,\R^3)\: , \: \|X\|_\infty\leq 1 \}, 
$
so that in particular $L(T)$ is independent of the choice of parametrization $\gamma.$

The set $\mathcal{T}$ of integral 1-currents in $\R^3$ without boundary is the set of vector valued distributions  
$T\in \mathcal{D}'(\R^3,\R^3)$
such that
$
T = \sum_{j\in \N} T_j
$
in $\mathcal{D}'(\R^3,\R^3)$ for a sequence $(T_j)_{j\in \N}$ of simple closed oriented curves in $\R^3$ such that
$
\sum_{j\in \N} L(T_j) < +\infty.
$ 
The mass of an integral 1-current in $\R^3$ without boundary $T\in \mathcal{T}$ is defined as
$
\|T\| := \sup \{ T(X) \ : \ X \in
\mathcal{D}(\R^3,\R^3)\: , \: \|X\|_\infty\leq 1 \}, 
$
and in we have 
$
\| T\| \leq \sum_{j\in \N} L(T_j)
$
whenever $T = \sum_{j\in \N} T_j$ for a sequence of simple closed oriented curves $(T_j)_{j\in \N}$ is
$\R^3.$

\noindent{\bf Oriented integral varifolds}. The set $\mathcal{V}$ of oriented integral 1-varifolds in $\R^3$ 
without boundary is defined\footnote{We emphasize that  ``integral" and ``without boundary" actually refer not to the
oriented varifold $V_t$ but to its first moment $T_{V_t}$. This is arguably an abuse of language, but it is convenient here. As a result, 
although the terminologies look similar, our definition of 
integral oriented varifold allows for non trivial measures with respect to $\xi$ variables, whereas the
definition of integral varifolds of Almgren and Allard doesn't.} as the set of finite non-negative Radon measures 
$V \in \mathcal{M}(\R^3\times S^2)$ whose first moment with respect to the $S^2$ variable 
$$
T_V\: : \: \mathcal{D}(\R^3,\R^3) \to \R,\ X \mapsto \int X(x)\cdot \xi dV(x,\xi)
$$
is an integral 1-current in $\R^3$ without boundary. The mass of $V\in \mathcal{V}$ is defined as
$
\|V\| := \sup \{ V(\psi) \ : \ \psi \in
\mathcal{D}(\R^3\times S^2,\R)\: , \: \|\psi\|_\infty\leq 1 \},
$ 
and in particular we always have the inequality
$
\|T_V\| \leq \|V\|.
$

\noindent{\bf Measurable and continuous families.} 
In the sequel, $I\subset \R$ denotes an interval 
such that $0\in I.$   
A family $(T_t)_{t\in I}$ of integral 1-currents in $\R^3$ without boundary is called continuous if
the map $t\mapsto T_t$ is continuous from $I$ to $\mathcal{D}'(\R^3,\R^3).$  A family $(V_t)_{t\in I}$ of
oriented integral 1-varifolds without boundary is called measurable if for every Borel subset $\mathcal{O} \subset
\R^3\times S^2,$ the map $t\mapsto V_t(\mathcal{O})$ is measurable on $I.$ 


\medskip
We are now in position to state:

\begin{defi}\label{def:genbf}
A measurable family $(V_t)_{t\in I}$ of oriented integral 1-varifolds in $\R^3$ without boundary
is called a generalized binormal curvature flow on $I$ if for any $X\in \mathcal{D}(\R^3,\R^3)$ the 
function $t\mapsto V_t(X\cdot\xi)$ is Lipschitz
on $I$ and satisfies
\begin{equation}\label{eq:cle}
\frac{d}{dt} \int  X \cdot \xi \, dV_t  = - \int D({\rm curl}(X))\, : \, \xi\otimes\xi\: dV_t
\end{equation}
for almost every $t\in I.$ 
\end{defi}

\begin{defi}\label{def:weakbf}
A continuous family $(T_t)_{t\in I}$ of integral 1-currents 
in $\R^3$ without boundary is called a weak binormal curvature flow on $I$ with initial datum $T_0$ if 
if there exists a generalized binormal curvature flow $(V_t)_{t\in I}$ on $I$ such that      
\begin{enumerate}
\item
The first moment $T_{V_t}$ of $V_t$ coincides with $T_t$ for every $t\in I.$  
\item
The mass $\|V_t\|$ satisfies 
$ \|V_t\| \leq \|T_0\|$ for every $t\in I.$
\end{enumerate}     
\end{defi} 

For a generalized binormal curvature flow $(V_t)_{t\in I}$ on $I$, we call the family of first moments
$(T_{V_t})_{t\in I}$ its family of associated undercurrents.

\begin{rem}\label{rem:lineaire}
i) Notice that Definition \ref{def:genbf} is linear in $V_t$. In particular, the sum of two generalized binormal
curvature flows is a generalized binormal curvature flow.  Also, if $(T_t^1)_{t\in I}$ and $(T_t^2)_{t\in I}$ 
are two weak binormal curvature flows with initial data $T_0^1$ and
$T_0^2$ respectively, and if moreover $\|T_0^1+T_0^2\| = \|T_0^1\| + \|T_0^2\|$, then 
$(T_t^1+T_t^2)_{t\in I}$ is a weak binormal curvature flow with initial datum $T_0^1+T_0^2.$

ii) Notice also that Definition \ref{def:genbf} only involves, in terms of $V_t$, its first moment on the left-hand 
side of \eqref{eq:cle} and its second moment on the
right-hand side of \eqref{eq:cle}. As a result, a uniqueness or a Cauchy theory for generalized binormal curvature flows at the level of $V_t$ is ruled out a priori. Further possible pathologies of generalized binormal curvature flows are illustrated by examples that we 
present in Remark \ref{rem:pathological}, at the end of Section \ref{S5.2}. 

As we will see, the situation greatly improves for weak binormal curvature flows.  

iii) Finally observe that the equality \eqref{eq:cle} actually makes sense for a general measurable family of 
Radon measures $V_t \in \mathcal{M}(\R^3\times S^2)$. 
Since we know only of artificial such 
examples of ``diffuse'' flows, we have preferred to stick with the actual Definition \ref{def:genbf}. 

Note, however, that Theorems \ref{thm:wsuniqueness} and \ref{thm:stability} below, which establish
weak-strong uniqueness of weak binormal curvature flows together with a related stability result, do not require the full strength of the definition of weak binormal curvature flow. Indeed, the assumption
that the undercurrents $T_{V_t}$ be integral for every  $t$ is not used anywhere in these proofs.
\end{rem}

\medskip
In view of Lemma \ref{lem:weakform}, we immediately deduce

\begin{prop}[Consistency]\label{prop:consistency}
Let $\ell>0$ and $\gamma \::\: I\times \left(\R/\ell\Z\right) \to \R^3$ denote a smooth classical solution of the binormal curvature flow 
equation \eqref{eq:strongbfbis}. The family $(V_{\gamma,t})_{t\in I}$ defined by
$$
V_{\gamma,t}(\psi) := \int_0^\ell \psi(\gamma(t,s),\partial_s\gamma(t,s))\, ds \qquad \forall\, \psi\in
\mathcal{D}(\R^3\times S^2,\R),
$$
is a generalized binormal curvature flow on $I$, and the family $(T_{\gamma,t})_{t\in I}$ defined by  
$$
T_{\gamma,t}(X) := \int_0^\ell X(\gamma(t,s))\cdot \partial_s\gamma(t,s)\, ds \qquad \forall\, X\in
\mathcal{D}(\R^3,\R^3)
$$ 
is a weak binormal curvature flow on $I$ with initial datum $T_{\gamma,0}$ provided $\|T_{\gamma,0}\|=\ell.$ 
\end{prop}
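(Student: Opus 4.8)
The plan is to recognize the statement as essentially a transcription of Lemma~\ref{lem:weakform} into the measure-theoretic language of Definitions~\ref{def:genbf} and~\ref{def:weakbf}, the bulk of the work being to check the structural and regularity requirements rather than to produce any new differential identity. Concretely I would proceed in four steps: (i) verify that each $V_{\gamma,t}$ is an oriented integral $1$-varifold without boundary; (ii) establish the measurability of $(V_{\gamma,t})_{t\in I}$ and the Lipschitz regularity of $t\mapsto V_{\gamma,t}(X\cdot\xi)$; (iii) read off the flow identity~\eqref{eq:cle} from~\eqref{eq:weakbf1}; and (iv) treat the current-level assertion, isolating the point where the hypothesis $\|T_{\gamma,0}\|=\ell$ is used.

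For step (i), I would observe that $V_{\gamma,t}$ is the push-forward of Lebesgue measure on $[0,\ell]$ under the smooth map $s\mapsto(\gamma(t,s),\partial_s\gamma(t,s))$. The arc-length condition~\eqref{eq:arc}, which is preserved by the flow as computed in the introduction, guarantees $\partial_s\gamma(t,s)\in S^2$, so that $V_{\gamma,t}$ is indeed a finite non-negative Radon measure on $\R^3\times S^2$, of total mass $\int_0^\ell 1\,ds=\ell$. Its first moment is exactly $T_{\gamma,t}$, the current carried by the closed curve $\gamma(t,\cdot)$. This current has no boundary because $\gamma(t,\cdot)$ is a loop: for any $f$, $T_{\gamma,t}(\nabla f)=\int_0^\ell \partial_s\bigl(f(\gamma(t,s))\bigr)\,ds=0$ by periodicity in $s$. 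Integrality follows since $T_{\gamma,t}$ is the push-forward of the oriented loop $\T^1$ under the Lipschitz map $\gamma(t,\cdot)$, hence lies in $\mathcal{T}$ in the sense of the characterization cited from \cite{Fed}. Thus $V_{\gamma,t}\in\mathcal{V}$ for every $t$.

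For steps (ii) and (iii), smoothness of $\gamma$ on $I\times\T^1$ makes $t\mapsto V_{\gamma,t}(\psi)$ continuous, hence measurable, for each $\psi$, and likewise $t\mapsto T_{\gamma,t}$ continuous into $\mathcal{D}'(\R^3,\R^3)$. Since the parametrization is by arc length, one has $\tau_t=\partial_s\gamma$ and $\int_{\gamma_t}(\cdot)\,d\mathcal{H}^1=\int_0^\ell(\cdot)\,ds$, so that Lemma~\ref{lem:weakform} says precisely that $t\mapsto V_{\gamma,t}(X\cdot\xi)$ is differentiable with $\frac{d}{dt}V_{\gamma,t}(X\cdot\xi)=-\int D({\rm curl}\,X):\xi\otimes\xi\,dV_{\gamma,t}$, which is~\eqref{eq:cle}. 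The right-hand side is bounded in modulus by $\ell\,\|D({\rm curl}\,X)\|_\infty$ uniformly in $t$, and this same bound furnishes the required (global) Lipschitz estimate for $t\mapsto V_{\gamma,t}(X\cdot\xi)$. Hence $(V_{\gamma,t})_{t\in I}$ is a generalized binormal curvature flow.

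Finally, for step (iv) I would take $V_t:=V_{\gamma,t}$ in Definition~\ref{def:weakbf}. Condition~(1) holds since $T_{V_{\gamma,t}}=T_{\gamma,t}$, and $(T_{\gamma,t})_{t\in I}$ is the continuous family of integral currents produced above. For condition~(2), the mass $\|V_{\gamma,t}\|$ equals the total mass of the measure $V_{\gamma,t}$, namely $\ell$, for every $t$. The required bound $\|V_{\gamma,t}\|\le\|T_0\|$ therefore becomes $\ell\le\|T_{\gamma,0}\|$. Since a single closed curve always satisfies $\|T_{\gamma,0}\|\le L(T_{\gamma,0})=\ell$, this holds (with equality) exactly under the hypothesis $\|T_{\gamma,0}\|=\ell$. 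I expect this last point to be the only genuinely delicate one: it encodes the fact that the varifold mass sees the full length with no cancellation, whereas the current mass may drop below $\ell$ when $\gamma(t,\cdot)$ is not injective, and the assumption $\|T_{\gamma,0}\|=\ell$ is precisely what rules out such cancellation at the initial time so that condition~(2) can hold.
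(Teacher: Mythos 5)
Your proposal is correct and follows essentially the same route as the paper, whose proof consists of the single remark that the proposition ``is nothing more than a rephrasing of Lemma \ref{lem:weakform} in the frameworks of Definition \ref{def:genbf} and Definition \ref{def:weakbf}''; your four steps simply make that rephrasing explicit, and your identification of the hypothesis $\|T_{\gamma,0}\|=\ell$ as exactly what reconciles the constant varifold mass $\|V_{\gamma,t}\|=\ell$ with condition (2) of Definition \ref{def:weakbf} is the right reading of the one genuinely substantive point.
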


\smallskip

An advantage of Definitions \ref{def:genbf} and \ref{def:weakbf} is that lead rather directly to 
an existence theory globally in time. 

\begin{thm}[Global existence]\label{thm:existence}
For any integral 1-current in $\R^3$ without boundary $T_0,$ there exist a weak binormal curvature flow 
$(T_t)_{t\in \R}$ on $\R$ with initial datum $T_0.$
\end{thm}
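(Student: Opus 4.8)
The plan is to construct the flow by approximation: regularize the initial current by smooth closed curves, evolve each by the classical flow \eqref{eq:strongbfbis} (which is globally well-posed in that class, as recalled in the introduction), and pass to the limit at the level of the associated oriented varifolds, where the problematic quadratic term in \eqref{eq:cle} becomes linear.

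First I would approximate $T_0$ by a sequence $T_0^n$ of finite unions of smooth embedded closed curves with $T_0^n \to T_0$ in $\mathcal{D}'(\R^3,\R^3)$ and $\|T_0^n\| \to \|T_0\|$; such approximations exist by the approximation theory for integral $1$-currents without boundary (polyhedral approximation followed by smoothing, arranging embeddedness so that mass equals length). Evolving each component by \eqref{eq:strongbfbis} produces, for every $t\in\R$, a smooth curve whose total length is preserved, and hence by Proposition~\ref{prop:consistency} a generalized binormal curvature flow $(V^n_t)_{t\in\R}$ whose first moment $T^n_t := T_{V^n_t}$ is an integral $1$-current without boundary. The crucial point is that the \emph{varifold} mass equals the total arc length, so $\|V^n_t\| = \ell_n = \|T_0^n\|$ for all $t$, irrespective of any self-intersections developed along the flow. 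This yields two uniform bounds: a uniform mass bound $\sup_{n,t}\|V^n_t\| \le C$, and, from \eqref{eq:cle} together with $\|V_t^n\|\le C$, a uniform Lipschitz bound in $t$ for each first-moment pairing $t\mapsto V^n_t(X\cdot\xi) = T^n_t(X)$.

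Next I would pass to the limit, keeping carefully separate the two different moments entering \eqref{eq:cle}. For the first moments, the uniform Lipschitz-in-time bound and the uniform mass bound permit an Arzel\`a--Ascoli plus diagonal argument over a countable dense set of test fields $X$ and an exhaustion of $\R$ by compact time intervals; this gives a subsequence along which $t\mapsto T^n_t$ converges, uniformly on compact time intervals, to a continuous family of currents $(T_t)$ with $T_{t=0}=T_0$ (matching the initial datum uses both $T_0^n\to T_0$ and the uniform-in-time convergence). For the second moments, the uniform mass bound alone provides weak-$*$ compactness in the space-time sense: along a further subsequence, $\int_{\R}\phi(t)\,V^n_t(\psi)\,dt$ converges for every $\phi\in C_c(\R)$ and every $\psi\in C_0(\R^3\times S^2)$, producing a measurable family $(V_t)$ with $T_{V_t}=T_t$ and $\|V_t\|\le \liminf_n\|V^n_t\| = \|T_0\|$ by weak-$*$ lower semicontinuity. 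Passing to the limit in the time-integrated form of \eqref{eq:cle} --- legitimate precisely because that equation is linear in $V_t$ --- shows that $(V_t)$ is a generalized binormal curvature flow. Finally, since each $T^n_t$ is an integral $1$-current without boundary with uniformly bounded mass and vanishing boundary, the Federer--Fleming closure theorem guarantees that the limit $T_t$ is again an integral $1$-current without boundary for every $t$; combined with the mass bound and $T_{V_t}=T_t$, this exhibits $(T_t)_{t\in\R}$ as a weak binormal curvature flow with initial datum $T_0$.

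The hard part is the simultaneous limit passage of the two moments. The delicate point is that the framework supplies time-equicontinuity only for the first-moment pairings $t\mapsto V_t(X\cdot\xi)$, and \emph{not} for the second-moment quantity $\int D(\mathrm{curl}\,X):\xi\otimes\xi\,dV_t$ on the right-hand side of \eqref{eq:cle}; one must therefore reconcile strong (uniform-on-compacts) convergence of the first moments with merely weak-$*$ space-time convergence of the varifolds, checking that these are compatible, namely that the weak-$*$ limit family has the continuous family $(T_t)$ as its first moment and that the integrated equation survives. Ensuring that $(V_t)$ is genuinely measurable, that the equation holds for almost every $t$, and that the initial datum is attained in the current topology are the steps requiring the most care; integrality of the limit currents, by contrast, is a direct application of the closure theorem. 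Global-in-time existence then follows because the approximating classical flows exist for all $t\in\R$ with bounds uniform in $t$, so the construction may be performed on each $[-R,R]$ and the subsequences diagonalized as $R\to\infty$.
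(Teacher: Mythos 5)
Your proposal is correct and follows essentially the same route as the paper: approximation of $T_0$ by finite sums of smooth closed curves with additive masses (Federer's approximation theorem, so that varifold mass equals $\|T_0^n\|$), classical evolution of each component, uniform mass plus per-test-field Lipschitz-in-time bounds, space--time weak-$*$ compactness of $V^n_t\,dt$ with disintegration, and identification of first moments --- precisely the content of Propositions \ref{prop:estimee} and \ref{prop:fermeture} and Corollary \ref{cor:fermeture}. The only cosmetic differences are that the paper packages the equicontinuity as a H\"older-$\frac12$ estimate in the homogeneous flat metric and invokes Federer--Fleming compactness in a localized flat metric (where you use per-field Arzel\`a--Ascoli plus the closure theorem), and that the slicewise bounds $T_{V_t}=T_t$ and $\|V_t\|\le\|T_0\|$ initially hold only for a.e.\ $t$ and require redefining $V_t$ on a null set of times, a step the paper carries out explicitly and your outline accommodates.
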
 

Theorem \ref{thm:existence} is proved using an approximation argument and compactness properties. We present  
some of these intermediate steps now which, we believe, have their own independent interest. 

\begin{prop}\label{prop:estimee}
Let $(V_t)_{t\in I}$ be a generalized binormal curvature flow on $I$ and denote by $(T_{V_t})_{t\in I}$ 
be its family of associated undercurrents. There exists a universal constant $C>0$ such that for every 
$t_1\,,\,t_2 \in I$  we have the inequality
$$
d_{\mathcal{F}^*}(T_{V_{t_1}},T_{V_{t_2}}) \leq  
C \: \Big(\sup_{t\in I} \|V_t\|^\frac12\Big)\,|t_1-t_2|^\frac12,
$$
where, for $T,\tilde T\in \mathcal{T},$ 
$$
d^*_{\mathcal{F}}(T,\tilde T) := \sup \left\{  T(X) - \tilde T(X) \ : \  X\in
\mathcal{D}(\R^3,\R^3)\: , \: \|{\rm curl}(X)\|_\infty \leq 1 \right\}. 
$$
In particular, whenever $(T_t)_{t\in I}$ is a weak binormal curvature flow on $I$ with initial datum $T_0,$ 
$$
d_{\mathcal{F}^*}(T_{{t_1}},T_{{t_2}}) \leq  
C \|T_0\|^\frac12\,|t_1-t_2|^\frac12 \qquad \forall t_1,t_2\in I.
$$
\end{prop}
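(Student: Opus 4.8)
The plan is to bound, for each fixed $X\in\mathcal{D}(\R^3,\R^3)$ with $\|\mathrm{curl}(X)\|_\infty\le 1$, the scalar increment $T_{V_{t_1}}(X)-T_{V_{t_2}}(X)$ and then take the supremum over all such $X$. The available information is mismatched in a precise way: the defining identity \eqref{eq:cle} controls the time-derivative of $t\mapsto V_t(X\cdot\xi)=T_{V_t}(X)$, but its right-hand side involves $D(\mathrm{curl}(X))$, i.e. \emph{two} spatial derivatives of $X$, whereas the class admissible for $d_{\mathcal{F}^*}$ only bounds the \emph{one} derivative $\mathrm{curl}(X)$ in $L^\infty$. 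The proof must therefore interpolate between the time regularity (which costs one spatial derivative too many) and the spatial regularity (which carries no time derivative at all), and it is this interpolation that produces the Hölder-$\tfrac12$ exponent in time.

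I would introduce a standard mollifier $\rho_\lambda(x)=\lambda^{-3}\rho(x/\lambda)$ at scale $\lambda>0$, set $X^\lambda:=X*\rho_\lambda$, and split
\[
T_{V_{t_1}}(X)-T_{V_{t_2}}(X)=\big(T_{V_{t_1}}-T_{V_{t_2}}\big)(X-X^\lambda)+\big(T_{V_{t_1}}-T_{V_{t_2}}\big)(X^\lambda),
\]
treating the two terms by different mechanisms. For the smooth term I use \eqref{eq:cle}: since $t\mapsto V_t(X^\lambda\cdot\xi)$ is Lipschitz, the fundamental theorem of calculus gives
\[
\big(T_{V_{t_1}}-T_{V_{t_2}}\big)(X^\lambda)=-\int_{t_2}^{t_1}\int D(\mathrm{curl}(X^\lambda)):\xi\otimes\xi\,dV_t\,dt,
\]
and because $|\xi|=1$ on $S^2$ and $D(\mathrm{curl}(X^\lambda))=\mathrm{curl}(X)*D\rho_\lambda$, the inner integrand is bounded by $\|\mathrm{curl}(X)\|_\infty\|D\rho_\lambda\|_{L^1}\|V_t\|\le C\lambda^{-1}\sup_t\|V_t\|$. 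This controls the smooth term by $C\lambda^{-1}\big(\sup_t\|V_t\|\big)\,|t_1-t_2|$.

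For the rough term I would exploit that each $T_{V_{t_i}}$ is a boundaryless $1$-current, so $T_{V_{t_i}}(\nabla f)=0$ and the pairing $T_{V_{t_i}}(X)$ is gauge invariant, depending on $X$ only through $\mathrm{curl}(X)$. Writing $X-X^\lambda=\int\rho_\lambda(h)\big(X-X(\cdot-h)\big)\,dh$ and using that the flat-type distance between a boundaryless $1$-current and its translate by $h$ is at most $|h|$ times its mass, one gets $\big|T_{V_{t_i}}(X-X(\cdot-h))\big|\le |h|\,\|T_{V_{t_i}}\|\le|h|\sup_t\|V_t\|$ whenever $\|\mathrm{curl}(X)\|_\infty\le1$; averaging against $\rho_\lambda$, which is supported in $B_\lambda$, bounds the rough term by $C\lambda\sup_t\|V_t\|$. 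The essential feature is that this uses \emph{no} control on $\|X\|_\infty$, only on $\|\mathrm{curl}(X)\|_\infty$, which is precisely what the boundaryless property supplies.

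Combining the two bounds yields $T_{V_{t_1}}(X)-T_{V_{t_2}}(X)\le C\big(\lambda+\lambda^{-1}|t_1-t_2|\big)\sup_t\|V_t\|$ for every admissible $X$; optimizing with $\lambda=|t_1-t_2|^{1/2}$ and passing to the supremum over $X$ gives the displayed Hölder-$\tfrac12$-in-time estimate with prefactor $\sup_t\|V_t\|$, and the weak-flow case follows at once from $\|T_{V_t}\|\le\|V_t\|\le\|T_0\|$. I expect the rough term to be the genuine obstacle: one must estimate $T_{V_{t_i}}(X-X^\lambda)$ given only an $L^\infty$ bound on $\mathrm{curl}(X)$ and \emph{no} bound on $X$ itself, and the translate/gauge-invariance device above is exactly what makes this possible. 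Once both terms are in hand, balancing the scales $\lambda$ and $\lambda^{-1}|t_1-t_2|$ is routine.
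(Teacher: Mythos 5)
Your proposal is correct, and it follows the paper's overall architecture exactly: mollify at scale $\lambda$, use the defining identity \eqref{eq:cle} on the mollified field to obtain the $O(\lambda^{-1}|t_1-t_2|)$ contribution via $D({\rm curl}(X*\rho_\lambda))=({\rm curl}\,X)*D\rho_\lambda$, exploit boundarylessness for the rough remainder, and optimize $\lambda=|t_1-t_2|^{1/2}$. Where you genuinely diverge from the paper is in the mechanism for the rough term $T_{V_{t_i}}(X-X^\lambda)$. The paper derives, by a spherical-means computation, the explicit kernel identity $X-X_\eps=K_\eps*\Delta X$, splits $\Delta X=\nabla\,{\rm div}\,X-{\rm curl}\,{\rm curl}\,X$, annihilates the gradient part using $T_{V_{t_i}}(\nabla f)=0$, and bounds the remainder by $\|DK_\eps\|_{L^1}\|{\rm curl}\,X\|_\infty\,\|V_{t_i}\|$ with $\|DK_\eps\|_{L^1}\le C\eps$. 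You instead write $X-X^\lambda=\int\rho_\lambda(h)\bigl(X-X(\cdot-h)\bigr)\,dh$ and estimate $T(X)-T(X(\cdot-h))$ as the action of $X$ on the difference between $T$ and its translate $\tau_h T$, bounded by $|h|\,\|T\|\,\|{\rm curl}\,X\|_\infty$ via the affine homotopy (cylinder-filling) estimate: since $\partial T=0$, one has $T-\tau_h T=\partial S$ with $\mathbf{M}(S)\le |h|\,\mathbf{M}(T)$ (Federer's homotopy formula; alternatively, for the paper's definition of $\mathcal{T}$, a direct computation on a mass-additive decomposition into closed Lipschitz loops, cf.\ \cite{Fed} 4.2.25). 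Both devices use $\partial T=0$ in the same essential way — the paper to kill $T(\nabla(\cdot))$ after the Helmholtz splitting, you to ensure $T-\tau_h T$ bounds a small cylinder — but yours is more geometric and avoids constructing and estimating $K_\eps$, while the paper's is self-contained vector calculus. One point worth recording: like the paper's own proof, your argument delivers the prefactor $\sup_{t\in I}\|V_t\|$ to the \emph{first} power, i.e.\ $d_{\mathcal{F}}^*(T_{V_{t_1}},T_{V_{t_2}})\le C\bigl(\sup_{t\in I}\|V_t\|\bigr)|t_1-t_2|^{1/2}$. The half-power on the mass appearing in the statement cannot be correct as written (test it on two translates of a circle of large radius $R$ at time $t\sim R^2$, or use the scaling $\gamma\mapsto\mu^{-1}\gamma(\mu^2t,\mu s)$, under which area, mass and time scale as $\mu^{-2},\mu^{-1},\mu^{-2}$, forcing a linear mass dependence); so your linear-in-mass conclusion is exactly what the paper's proof establishes, and the exponent $\frac12$ on the mass in the statement is a typo.
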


\begin{rem}\label{re:area}
In geometric terms, the quantity $d_{\mathcal{F}}^*(T,\tilde T)$ is exactly equal to the area of the
two-dimensional minimal surface whose boundary is given by $T-\tilde T$ (see e.g. \cite{Fed} 4.1.12). The distance
$d_{\mathcal{F}}^*$ is also much related to and actually slightly stronger than Whitney's flat
metric. (In fact  $d_{\mathcal{F}}^*$ can be thought of as a homogeneous flat metric.) 
It follows therefore from Proposition \ref{prop:estimee} that when 
$(T_t)_{t\in I}$ is a weak binormal curvature flow on $I$ or the family of undercurrents associated to a 
generalized binormal curvature flow uniformly bounded in mass,  
the map $t\mapsto T_{t}$ is H\"older continuous 
with exponent $\frac12$ from $I\subset \R$ to $\mathcal{T}$ equipped with Whitney's flat metric. 
\end{rem}

\begin{prop}\label{prop:fermeture}
For each $n\in \N,$ let $(V^n_t)_{t\in I}$ be a generalized binormal curvature flow on $I.$ Assume 
that $\sup_{n\in \N,\, t\in I}\|V_t^n\| < +\infty$ and that
$$
V_t^n\, dt \rightharpoonup V \quad \text{ in }\quad \mathcal{M}(\R^3 \times S^2 \times I).
$$
Then $V=V_t\,dt$ in $\mathcal{M}(\R^2\times S^2\times I)$ where $(V_t)_{t\in I}$ is a generalized binormal
curvature flow on $I.$ Moreover, for every $t\in I$
$$
T_{V_t^n} \rightharpoonup T_{V_t}  \quad\text{in}\quad \mathcal{D}'(\R^3,\R^3)
$$ 
as $n\to +\infty.$
\end{prop}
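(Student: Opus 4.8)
The plan is to combine a disintegration (slicing) argument in the time variable with an Arzel\`a--Ascoli compactness for the first moments and the Federer--Fleming closure theorem for the undercurrents. Write $M := \sup_{n\in\N,\,t\in I}\|V_t^n\| < +\infty$. Since each $V_t^n$ is a nonnegative measure with $V_t^n(\R^3\times S^2)=\|V_t^n\|\le M$, the projection of $V_t^n\,dt$ onto $I$ equals $\|V_t^n\|\,dt\le M\,dt$; testing the weak convergence against functions $\phi(t)\ge 0$ depending only on time, the projection of $V$ onto $I$ is therefore $\le M\,dt$, hence absolutely continuous with respect to Lebesgue measure. The disintegration theorem then yields a measurable family $(V_t)_{t\in I}$ of nonnegative Radon measures on $\R^3\times S^2$ with $V=V_t\,dt$, and comparing the two expressions for the projection gives $\|V_t\|\le M$ for a.e.\ $t$. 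In particular $V$ charges no time-slice, a fact I use below to pass indicator functions to the limit.

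Next I would control the first moments. For fixed $X\in\mathcal{D}(\R^3,\R^3)$ set $g_n(t):=V_t^n(X\cdot\xi)=T_{V_t^n}(X)$. Since $|\xi|=1$ on $S^2$, equation \eqref{eq:cle} gives $|g_n'(t)|\le \|D(\mathrm{curl}\,X)\|_\infty\,\|V_t^n\|\le \|D(\mathrm{curl}\,X)\|_\infty\,M$, so the $g_n$ are uniformly Lipschitz, and $|g_n(t)|\le\|X\|_\infty M$ makes them uniformly bounded. By Arzel\`a--Ascoli every subsequence has a locally uniformly convergent sub-subsequence with Lipschitz limit $\ell$; testing $V_t^n\,dt\rightharpoonup V_t\,dt$ against $\phi(t)\,X(x)\cdot\xi$ with $\phi\in\mathcal{D}(I)$ identifies $\ell(t)=V_t(X\cdot\xi)$ for a.e.\ $t$. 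As any two such (continuous) limits agree a.e.\ and hence everywhere, the \emph{whole} sequence $g_n$ converges locally uniformly to a single Lipschitz function $G^X$; thus $T_{V_t^n}(X)\to G^X(t)$ for every $t\in I$, with $G^X(t)=V_t(X\cdot\xi)$ a.e.

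To verify \eqref{eq:cle} for the limit, integrate it for $V^n$ between $t_1$ and $t_2$ and pass to the limit. The left-hand side converges to $G^X(t_2)-G^X(t_1)$ by the pointwise-everywhere convergence just obtained. For the right-hand side, the map $(x,\xi)\mapsto D(\mathrm{curl}\,X)(x):\xi\otimes\xi$ belongs to $\mathcal{D}(\R^3\times S^2,\R)$, so weak convergence together with the absence of time-atoms (which makes the discontinuity set of $\mathbf{1}_{(t_1,t_2)}$ a $V$-null set) lets me replace $V_t^n\,dt$ by $V_t\,dt$ and obtain $G^X(t_2)-G^X(t_1)=-\int_{t_1}^{t_2}V_t\big(D(\mathrm{curl}\,X):\xi\otimes\xi\big)\,dt$. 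Differentiating in $t_2$ shows that $t\mapsto V_t(X\cdot\xi)$ admits the Lipschitz representative $G^X$ and satisfies \eqref{eq:cle} almost everywhere. A single null set serving all $X$ is produced by running this for a countable family dense in $C^2_c$ and then using the uniform Lipschitz bounds to pass to arbitrary $X$.

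It remains to check that each undercurrent is integral without boundary, so that $(V_t)$ is admissible in the sense of Definition \ref{def:genbf}. For every $t$ the currents $T_{V_t^n}$ have vanishing boundary and mass $\le M$, and $T_{V_t^n}\rightharpoonup T_{V_t}$ in $\mathcal{D}'$; since mass-bounded weak convergence implies flat convergence, the (local) Federer--Fleming closure theorem forces the limit to be an integral $1$-current without boundary, hence an element of $\mathcal{T}$. After modifying $(V_t)$ on the exceptional null set so that its undercurrent equals $\lim_n T_{V_t^n}$ for every $t$ (choosing, say, the oriented varifold canonically associated with that integral current; this affects neither measurability nor the a.e.\ identity \eqref{eq:cle}), the family $(V_t)_{t\in I}$ is a generalized binormal curvature flow and $T_{V_t^n}\rightharpoonup T_{V_t}$ for every $t\in I$. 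The main obstacle is precisely the bookkeeping that upgrades the almost-everywhere statements to every-$t$ statements: the uniform Lipschitz bound is what turns subsequential a.e.\ convergence of the first moments into locally uniform convergence of the full sequence, while the closure theorem is what guarantees integrality of the limiting undercurrents at each individual time. Note that the quadratic term on the right of \eqref{eq:cle} causes no trouble here, contrary to the current-only setting, because it is tested against the genuine continuous function $D(\mathrm{curl}\,X):\xi\otimes\xi$ on $\R^3\times S^2$, on which the varifold convergence acts directly.
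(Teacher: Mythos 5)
Your proof is correct, and it reaches the conclusion by a genuinely different compactness mechanism than the paper, although the overall skeleton is shared: disintegrate $V=V_t\,dt$ from the uniform mass bound, identify $T_{V_t}$ with $\lim_n T_{V_t^n}$ for a.e.\ $t$ by testing against $h(t)X(x)\cdot\xi$, redefine $V_t$ on a null set to upgrade to every $t$, and pass to the limit in \eqref{eq:cle} using uniform-in-$n$ Lipschitz bounds in $t$. The divergence is in how for-every-$t$ convergence and integrality of the limit currents are secured. The paper invokes its Proposition~\ref{prop:estimee} (the mollification/interpolation argument giving H\"older-$\frac12$ continuity in the homogeneous flat metric, uniformly over all $X$ with $\|{\rm curl}(X)\|_\infty\le 1$) and then applies Arzel\`a--Ascoli to the maps $t\mapsto T_{V_t^n}$ viewed as elements of $\mathcal{C}(I,Y)$, where $Y=\{T\in\mathcal{T}\,:\,\|T\|\le R\}$ is compact for the localized flat metric by Federer--Fleming compactness; this yields in one stroke locally uniform flat convergence of the full current-valued maps \emph{and} integrality of the limits, since closure is bundled into compactness of $Y$. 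You instead argue scalar-wise: for each fixed smooth $X$ the equation itself gives the uniform Lipschitz bound on $t\mapsto T_{V_t^n}(X)$ --- no interpolation is needed because $D({\rm curl}(X))$ is bounded for fixed $X$ --- and scalar Arzel\`a--Ascoli plus a.e.\ identification and uniqueness of limits yields full-sequence pointwise-in-$t$ convergence; a countable dense family together with the mass bound then gives weak convergence of the currents at every $t$, and integrality requires a separate appeal to the (local) Federer--Fleming closure theorem. Your route is more elementary in that it bypasses Proposition~\ref{prop:estimee} entirely, at the price of a slightly weaker mode of convergence (distributional at each $t$, rather than the locally uniform flat convergence the paper notes it actually obtains); and your step ``mass-bounded weak convergence implies flat convergence'' is itself a sub-subsequence consequence of the same compactness theorem, so the closure ingredient appears in both proofs, deployed by you pointwise in $t$ rather than packaged into the compact target space $Y$. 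Your handling of the remaining fine points --- the portmanteau argument for the time-indicator using that the projection of $V$ onto $I$ has no atoms, the single null set obtained from a countable $C^2$-dense family with the $M$-Lipschitz dependence on $X$, and the canonical varifold lift in the redefinition step (which preserves measurability, the mass bound, and the a.e.\ identity \eqref{eq:cle}) --- is sound.
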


Proposition \ref{prop:fermeture} implies in particular that every sequence of smooth binormal flows with uniform mass bounds and  possibly highly oscillatory behavior converges, along subsequences, to a  generalized flow. Examples of such limits which are not weak binormal curvature flows are provided in Section \ref{S5.2}.

\begin{cor}\label{cor:fermeture}
For each $n\in \N,$ let $(T_t^n)_{t\in I}$ be a weak binormal curvature flow on $I$
with initial datum $T_0^n.$ Assume that for some $T_0 \in \mathcal{T}$ we have, as $n\to +\infty,$
$$
T_0^n \rightharpoonup T_0 \quad \text{in} \quad \mathcal{D}'(\R^3,\R^3) \qquad\text{and}\qquad 
\|T_0^n\| \to \|T_0\| \quad \text{in} \quad \R.
$$
 Then there exist a subsequence $(n_k)_{k\in \N}$, and a weak binormal curvature flow 
$(T_t)_{t\in I}$  on $I$ with initial datum $T_0$ such 
that, as $k\to +\infty,$
$$
T_t^{n_k} \rightharpoonup T_t \quad \text{in}\quad  \mathcal{D}'(\R^3,\R^3), 
$$ 
for every $t\in I.$
\end{cor}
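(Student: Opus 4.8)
The plan is to deduce the corollary from the closure property for generalized flows (Proposition \ref{prop:fermeture}) together with the quantitative continuity estimate (Proposition \ref{prop:estimee}). Since each $(T_t^n)_{t\in I}$ is a weak binormal curvature flow, it comes with a witnessing generalized flow $(V_t^n)_{t\in I}$ such that $T_{V_t^n}=T_t^n$ and $\|V_t^n\|\le\|T_0^n\|$ for every $t$. The convergence $\|T_0^n\|\to\|T_0\|$ gives $M:=\sup_n\|T_0^n\|<\infty$, hence the uniform bound $\sup_{n,t}\|V_t^n\|\le M$ required by Proposition \ref{prop:fermeture}. The natural candidate for the limiting flow is $T_t:=T_{V_t}$, where $(V_t)$ is the generalized flow produced by the closure theorem.

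First I would set up the compactness. The space-time measures $V_t^n\,dt\in\mathcal{M}(\R^3\times S^2\times I)$ are locally uniformly bounded in mass (bounded by $M$ per unit time), so after passing to a subsequence $n_k$ they converge vaguely to some $V$; when $I$ is unbounded this is arranged by a diagonal argument over a bounded exhaustion of $I$. Proposition \ref{prop:fermeture} then yields $V=V_t\,dt$ with $(V_t)$ a generalized binormal curvature flow and, crucially, the pointwise-in-time convergence $T_{V_t^{n_k}}\rightharpoonup T_{V_t}$ in $\mathcal{D}'$ for every $t\in I$. Setting $T_t:=T_{V_t}$, this is exactly $T_t^{n_k}=T_{V_t^{n_k}}\rightharpoonup T_t$ for every $t$, which is the asserted convergence; moreover at $t=0$ the hypothesis $T_0^n\rightharpoonup T_0$ together with uniqueness of weak limits forces $T_{V_0}=T_0$, so the initial datum is the prescribed one.

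It remains to check that $(T_t)$ is a weak binormal curvature flow, i.e. to verify the mass constraint $\|V_t\|\le\|T_0\|$ of Definition \ref{def:weakbf}, and this is the delicate point. Vague convergence only controls the mass of $V=V_t\,dt$ globally, giving an averaged bound, so one first localizes in time: for $0\le\varphi\in C_c(I)$ and $0\le\psi\le1$ with $\psi\in C_c(\R^3\times S^2)$, passing to the limit in $\int\varphi(t)\psi\,dV^{n_k}=\int_I\varphi(t)\,V_t^{n_k}(\psi)\,dt$ and using $V_t^{n_k}(\psi)\le\|V_t^{n_k}\|\le\|T_0^{n_k}\|$ gives, after letting $\psi\nearrow1$ by monotone convergence, the inequality $\int_I\varphi(t)\,\|V_t\|\,dt\le\|T_0\|\int_I\varphi(t)\,dt$. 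Since $\varphi\ge0$ is arbitrary and $t\mapsto\|V_t\|$ is measurable, this yields $\|V_t\|\le\|T_0\|$ for almost every $t$.

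Finally I would upgrade this almost-everywhere bound to every $t$ by a harmless surgery on the null set $N$ of exceptional times. For $t\in N$ one extracts a further (vaguely convergent) subsequence of $V_t^{n_k}$ with limit $\hat V_t$; then $\|\hat V_t\|\le\|T_0\|$ by lower semicontinuity of the mass, while $T_{\hat V_t}=T_{V_t}=T_t$ since the first moment is continuous under vague convergence and the pointwise first-moment limit is already identified. Redefining $W_t:=\hat V_t$ on $N$ and $W_t:=V_t$ off $N$ produces a family with $\|W_t\|\le\|T_0\|$ for every $t$; because $T_{W_t}=T_t$ for every $t$, the first-moment function $t\mapsto W_t(X\cdot\xi)=T_t(X)$ is unchanged and hence still Lipschitz, and since $W_t=V_t$ off a null set both the identity \eqref{eq:cle} and the measurability are preserved, so $(W_t)$ is still a generalized flow. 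The continuity of $t\mapsto T_t$ into $\mathcal{D}'$ is then inherited from Proposition \ref{prop:estimee} and Remark \ref{re:area}, and $(T_t)$ is a weak binormal curvature flow with initial datum $T_0$. The main obstacle throughout is precisely the passage from the averaged mass bound to the pointwise bound required by the definition: vague convergence of $V_t^{n_k}\,dt$ does not by itself control $\|V_t\|$ at individual times, and reconciling the (non-canonical) pointwise limits with the measurable generalized flow supplied by Proposition \ref{prop:fermeture} is what the null-set surgery accomplishes.
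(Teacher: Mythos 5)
Your proposal is correct and follows essentially the same route as the paper: extract witnessing generalized flows with $\sup_{t}\|V_t^n\|\le\|T_0^n\|$, use the uniform mass bound to get weak-$*$ compactness of $V_t^n\,dt$, apply Proposition \ref{prop:fermeture} to obtain the limiting generalized flow and the pointwise-in-$t$ convergence of undercurrents (identifying the initial datum at $t=0$), and conclude with the mass bound $\sup_{t\in I}\|V_t\|\le\liminf_k\|T_0^{n_k}\|=\|T_0\|$. The only difference is presentational: your averaged-bound-plus-null-set surgery re-derives, from the statement of Proposition \ref{prop:fermeture} alone, the everywhere-in-$t$ bound \eqref{eq:athena0} that the paper establishes inside the proof of that proposition (via disintegration and redefinition on a negligible set of times) and then simply invokes.
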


The first part of Proposition \ref{prop:fermeture} follows directly from Proposition \ref{prop:estimee} and 
the Arzel\`a-Ascoli theorem applied for a suitable localized version of the flat metric. The provided
convergence is actually stronger than stated in Proposition \ref{prop:fermeture} or Corollary \ref{cor:fermeture}
 (see Section \ref{sect:3}). Theorem \ref{thm:existence} follows from Corollary \ref{cor:fermeture}   
and the fact that integral 1-currents in $\R^3$ without boundary can be suitably approximated by finite sums 
of smooth closed curves, for which global existence of solutions to \eqref{eq:strongbfbis} can be used in
conjunction with Proposition \ref{prop:consistency} and the linearity mentioned in Remark \ref{rem:lineaire}.   

\medskip

Uniqueness of weak binormal curvature flows for a given initial datum $T_0$ fails in general under Definition
\ref{def:weakbf}, and in particular it is necessary to consider a subsequence in the statement of 
Corollary \ref{cor:fermeture}. We believe however that Definition \ref{def:weakbf} is sufficiently strong
to eliminate unrealistic sources of non uniqueness, and that the remaining ones are probably intrinsic to 
any reasonable formulation of weak binormal curvature flows that requires self-intersections 
and collisions to possibly matter. A typical example of non unique evolution is provided by an initial datum 
consisting of the sum of two circles of different radii (or else living in
different planes) and that have exactly one intersection point. A first evolution is given by the sum of the 
independent evolutions of both circles, which are traveling wave solutions, and whose mutual distance will
indefinitely increase since their speeds differ as vectors. A second evolution is obtained by approximating
the initial datum by smooth simple closed curves $T_0^n$ and applying Corollary \ref{cor:fermeture}
to their classical evolutions according to equation \eqref{eq:strongbfbis}. In this second case, the solution
at any time is supported in a Lipschitz image of $\T^1$, and therefore necessarily differs from the first evolution.           

\smallskip

Still, we have

\begin{thm}[Weak-strong uniqueness]\label{thm:wsuniqueness}
Let $\ell>0$ and $\gamma \::\: I\times \left(\R/\ell\Z\right) \to \R^3$ denote a smooth classical solution of the binormal curvature flow 
equation \eqref{eq:strongbfbis}, and assume that for any $t\in I$, the curve  
$\gamma_t:=\gamma(t,\cdot)$ is without self-intersection.  Then the weak binormal curvature flow 
$(T_{\gamma,t})_{t\in I}$ provided by Proposition \ref{prop:consistency} is the unique weak binormal curvature 
flow on $I$ with initial datum $T_{\gamma,0}.$    
\end{thm}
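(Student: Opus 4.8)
The plan is to argue by a relative entropy (modulated energy) method. Let $(T_t)_{t\in I}$ be any weak binormal curvature flow with initial datum $T_{\gamma,0}$, and let $(V_t)_{t\in I}$ be an associated generalized flow, so that $T_{V_t}=T_t$ and $\|V_t\|\le\|T_{\gamma,0}\|=\ell$ for every $t$ (the equality $\|T_{\gamma,0}\|=\ell$ holding because $\gamma_0$ is embedded). Since $\gamma$ is smooth and each $\gamma_t$ is without self-intersection, on any compact subinterval $I'\Subset I$ the curves $\gamma_t$ admit uniform tubular neighborhoods, and I would construct a smooth, compactly supported, time dependent field $u(t,\cdot):\R^3\to\R^3$ with $u(t,\gamma(t,s))=\tau_t(s)$, with $|u(t,x)|\le 1$ everywhere and $|u(t,x)|<1$ for $x\notin\gamma_t$ (this is exactly where embeddedness is used). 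The target is to show that $V_t$ must coincide with the canonical lift $V_{\gamma,t}$ of Proposition~\ref{prop:consistency}; this yields $T_t=T_{V_t}=T_{\gamma,t}$.

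I would then introduce the relative entropy
$$
\mathcal{E}(t):=\int (1-u(t,x)\cdot\xi)\,dV_t(x,\xi)\ \ge\ 0,
$$
nonnegative since $|u|\le 1$ and $|\xi|=1$, together with the auxiliary functional $\mathcal{F}(t):=\ell-T_{V_t}(u(t,\cdot))$. The mass bound gives $\mathcal{F}(t)=\mathcal{E}(t)+(\ell-\|V_t\|)\ge\mathcal{E}(t)\ge0$, and since $u(0,\gamma_0(s))=\tau_0(s)$ one computes $T_{V_0}(u(0,\cdot))=T_{\gamma,0}(u(0,\cdot))=\ell$, so that $\mathcal{F}(0)=0$ and hence also $\mathcal{E}(0)=0$ and $\|V_0\|=\ell$. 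The point of working with $\mathcal{F}$ is that, unlike $\mathcal{E}$, its time derivative involves $V_t$ only through the first moment and the second moment appearing in \eqref{eq:cle}, so that one never needs to differentiate the mass $\|V_t\|$. Differentiating and using \eqref{eq:cle} with $X=u(t,\cdot)$ gives
$$
\mathcal{F}'(t)=\int Q(t,x,\xi)\,dV_t,\qquad Q:=-\partial_t u\cdot\xi+D(\mathrm{curl}\,u):\xi\otimes\xi .
$$

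The heart of the argument is the pointwise estimate
$$
Q(t,x,\xi)\ \le\ C\,(1-u(t,x)\cdot\xi)\qquad\text{for all }(x,\xi)\in\R^3\times S^2,
$$
with $C$ depending only on $\gamma$ restricted to $I'$. Granting it, one gets $\mathcal{F}'(t)\le C\,\mathcal{E}(t)\le C\,\mathcal{F}(t)$, and Gronwall's inequality together with $\mathcal{F}(0)=0$ and $\mathcal{F}\ge0$ forces $\mathcal{F}\equiv0$ on $I'$, whence $\mathcal{E}\equiv0$ and $\|V_t\|\equiv\ell$. Establishing this inequality is where I expect the main obstacle to lie: the quantity $1-u\cdot\xi$ vanishes precisely, and to second order, on the lift $\{x\in\gamma_t,\ \xi=\tau_t\}$, so one needs $Q$ to vanish there and to have controlled first variation along $S^2$ at $\xi=\tau_t$. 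The zeroth order cancellation amounts to the statement that $V_{\gamma,t}$ solves \eqref{eq:cle}, i.e. to Lemma~\ref{lem:weakform}; the first order matching is a genuine constraint that must be absorbed by choosing the off-curve extension of $\tau_t$ and its time dependence suitably, exploiting the precise binormal structure $\partial_t\gamma=\tau\times\partial_s\tau$ of \eqref{eq:strongbfbis} and the geometry of $\mathrm{curl}\,u$. Away from a tubular neighborhood of $\gamma_t$ one takes $u$ small, so that $1-u\cdot\xi$ is bounded below and the inequality is trivial.

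Finally, once $\mathcal{E}\equiv0$, the integrand $1-u\cdot\xi$ vanishes $V_t$-almost everywhere; since $|u|<1$ off $\gamma_t$, this forces $V_t$ to be carried by the graph $\{(x,\tau_t(x)):x\in\gamma_t\}$, so that $V_t$ is determined by its spatial marginal $\mu_t$ on $\gamma_t$, with total mass $\mu_t(\gamma_t)=\|V_t\|=\ell$. Substituting $\xi=\tau_t(x)$ into \eqref{eq:cle} then closes an equation for $\mu_t$ alone, namely a linear continuity equation for the density along the smoothly moving curve $\gamma_t$, whose characteristics are prescribed by the strong solution. The datum $\mu_0=\mathcal{H}^1\llcorner\gamma_0$ is fixed by $T_{V_0}=T_{\gamma,0}$ together with $\|V_0\|=\ell$, and uniqueness for this transport equation, which requires no integrality of $T_{V_t}$, gives $\mu_t=\mathcal{H}^1\llcorner\gamma_t$ for all $t$. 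Hence $V_t=V_{\gamma,t}$ and $T_t=T_{\gamma,t}$ on $I'$; letting $I'\Subset I$ exhaust $I$, and arguing identically for $t<0$, would complete the proof.
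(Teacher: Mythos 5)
Your skeleton coincides with the paper's own proof: your field $u$ is exactly the paper's $X_{\gamma,r}(x,t)=f(d^2(x,\gamma_t))\,\tau_t(P_t(x))$ from \eqref{eq:Xgamma}, your functionals $\mathcal{E}$ and $\mathcal{F}$ are the paper's $F$ and $G$ from Theorem \ref{thm:stability}, and the Gronwall step is identical. But the one statement you explicitly defer --- the pointwise inequality $Q\le C(1-u\cdot\xi)$ --- is precisely Proposition \ref{prop:core}, and it is the entire mathematical content of the weak--strong uniqueness theorem; everything around it is routine. You diagnose correctly why it is delicate ($1-u\cdot\xi$ vanishes to second order on the lifted curve, so $Q$ must vanish there and be controlled to matching order), but you neither exhibit a $u$ for which the inequality holds nor give a reason one must exist, and this is not soft: the paper obtains it through the explicit choice above, with profile $f(d^2)=(1-(d/r)^2)^3$ and $r$ half the minimum of the curvature radius and the security radius, via a six-step computation (implicit function theorem for the nearest-point parameter $\zeta$, explicit formulas for the first and second derivatives of $\zeta$ and $d^2$, and cancellations special to the structure $\partial_t\gamma=\partial_s\gamma\times\partial_{ss}\gamma$), yielding $K=54/r^2+14\|\partial_{sss}\gamma\|_{L^\infty}$; the absorption of each term uses the quantitative lower bounds $1-X_{\gamma,r}\cdot\xi\ge d^2/r^2$, $\ge\tfrac12(1-\partial_s\gamma(\zeta)\cdot\xi)$ and $\ge\tfrac14|\xi^\perp|^2$, which depend on the specific decay of $f$. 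Since your Gronwall argument has no input without this estimate, the proposal has a genuine gap at its core.

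Two further points. First, a one-sided bound $\mathcal{F}'\le C\mathcal{F}$ with $\mathcal{F}(0)=0$ and $\mathcal{F}\ge 0$ kills $\mathcal{F}$ only for $t\ge 0$; since $I$ may contain negative times you need the two-sided bound, which is what Proposition \ref{prop:core} provides ($|\partial_tX_{\gamma,r}\cdot\xi_0-D(\mathrm{curl}\,X_{\gamma,r}):\xi_0\otimes\xi_0|\le K(1-X_{\gamma,r}\cdot\xi_0)$); saying you would ``argue identically for $t<0$'' presupposes it. Relatedly, \eqref{eq:cle} is stated for time-independent test fields in $\mathcal{D}(\R^3,\R^3)$, so differentiating $t\mapsto T_{V_t}(u(t,\cdot))$ with a time-dependent field of limited regularity needs a brief approximation argument, which the paper records at the start of the proof of Theorem \ref{thm:stability}. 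Second, your endgame via a continuity equation for the marginal $\mu_t$ is heavier than necessary and somewhat shaky as sketched, since \eqref{eq:cle} holds only for a.e.\ $t$ and $t\mapsto V_t$ is merely measurable; the paper concludes at each fixed time: $F(t)=0$ places the support of $T_t$ on $\gamma_t$, the Federer--Fleming constancy theorem gives $T_t=aT_{\gamma,t}$ with $a\in\Z$, and $G(t)=0$ forces $a=1$. Your variant does have the merit of avoiding integrality, in the spirit of Remark \ref{rem:lineaire}; but even then no transport in time is needed, because at each fixed $t$ the boundary-free condition alone forces the density of $\mu_t$ along $\gamma_t$ to be constant in arclength, and $\|V_t\|=\ell$ then pins it to $1$.
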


As a matter of fact, we deduce Theorem \ref{thm:wsuniqueness} from a stronger quantitative estimate. To that
purpose, consider a compact subset $J\subset I$ containing $0$ and set
$$
r \equiv r(\gamma,J) := \frac{1}{2}\min_{t\in J} \min\left( \|\partial_{ss}\gamma(t,\cdot)\|_{\infty}^{-1} ,
r_s(t) \right)  > 0,
$$
where the security radius $r_s(t)$ is defined as the largest positive real number with the property that 
every point $x$ satisfying $d(x,\gamma_t)< r_s(t)$ has a unique closest point $P_t(x)$ on $\gamma_t.$   
Define then the vector field $X_{\gamma,r}$ on $ \R^3\times J$ by\footnote{The function 
$f(d^2(\cdot,\gamma_t))$ vanishes where $P_t$ is undefined, so that $X_{\gamma,r}$ 
is globally well-defined.}
\begin{equation}\label{eq:Xgamma}
X_{\gamma,r} (x,t) = f(d^2(x,\gamma_t)) \tau_t(P_t(x))
\end{equation}
where $\tau_t$ is the oriented unit tangent vector along $\gamma_t$ and
$$
f(d^2) = \left\{
\begin{array}{ll}
\displaystyle \big(1- \big(\tfrac{d}{r}\big)^2\big)^3, & \ \text{for } \,0\:\leq d^2 \leq r^2,\\
\displaystyle \ 0, & \ \text{for } d^2\geq r^2.
\end{array}\right.
$$

\begin{thm}[Control of instability]\label{thm:stability}
Let $T_0 \in \mathcal{T}$ and let $(T_t)_{t\in J}$ be a weak binormal curvature flow on $J$ with 
initial datum $T_{0}.$ Define the non-negative functions $F$ and $G$ on $J$ by\footnote{Notice that the 
definitions of $F$ and $G$ only depend on the first moments $T_{V_t}$ of 
$V_t$; therefore $F$ and $G$ are uniquely determined by $T_t=T_{V_t}$ and well-defined.}
$$
G(t):= \|T_0\| - \int X_{\gamma,r}(x,t) \cdot \xi \: dV_t(x,\xi) \geq  F(t) := \int \bigl(1 - X_{\gamma,r}(x,t) \cdot \xi  \bigr)\: dV_t(x,\xi) \geq 0.
$$
Then $G$ is Lipschitzian on $J$ and
$$
\left| \frac{d}{dt} G(t) \right| \leq K F(t) \leq K G(t)
$$
almost everywhere on $J$, where 
$K \equiv K\big(r(\gamma,J),\,\|\partial_{sss}\gamma\|_{L^\infty(J\times \T^1)}\big).$
\end{thm}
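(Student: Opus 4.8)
The plan is to show that $G$ is Lipschitz with $G'(t)=\int\Phi(\cdot,\cdot,t)\,dV_t$ for a.e. $t$, where
\[
\Phi(x,\xi,t):=-\,\partial_t X_{\gamma,r}(x,t)\cdot\xi + D(\mathrm{curl}\,X_{\gamma,r}(\cdot,t))(x):\xi\otimes\xi,
\]
and then to prove the \emph{pointwise} estimate $|\Phi(x,\xi,t)|\le K\,(1-X_{\gamma,r}(x,t)\cdot\xi)$ on $\R^3\times S^2\times J$; integrating this against the nonnegative measure $V_t$ gives $|G'(t)|\le KF(t)$ immediately. The ordering relations are elementary: since $|X_{\gamma,r}|\le1$ and $|\xi|=1$ one has $1-X_{\gamma,r}\cdot\xi\ge0$, so $F\ge0$; and since $V_t\ge0$ has total mass $\|V_t\|\le\|T_0\|$ by Definition \ref{def:weakbf}(2), the difference $G(t)-F(t)=\|T_0\|-\|V_t\|\ge0$ gives $G\ge F$.

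For the differentiation formula I would first record that $X_{\gamma,r}(\cdot,t)$ is of class $C^2$ with compact support in the tube $\{d(\cdot,\gamma_t)\le r\}$ — the cut-off $f$ being $C^2$ at $d^2=r^2$ because of the triple zero — and is genuinely smooth near $\gamma_t$, with all relevant norms bounded uniformly for $t\in J$; in particular it is admissible in \eqref{eq:cle} after the usual smooth approximation. Writing $g(t):=\int X_{\gamma,r}(x,t)\cdot\xi\,dV_t$ so that $G=\|T_0\|-g$, I would split $g(t+h)-g(t)$ into a part where only the field moves, namely $\int_t^{t+h}\!\int\partial_t X_{\gamma,r}(\cdot,\sigma)\cdot\xi\,dV_{t+h}\,d\sigma$, and a part where only the flow moves, namely $-\int_t^{t+h}\!\int D(\mathrm{curl}\,X_{\gamma,r}(\cdot,t)):\xi\otimes\xi\,dV_\sigma\,d\sigma$ by \eqref{eq:cle}. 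The uniform mass bound makes both $O(h)$, so $G$ is Lipschitz; letting $h\to0$, continuity of $t\mapsto T_{V_t}$ in $\mathcal D'$ handles the first-moment $\partial_t$-term while Lebesgue points of the (merely measurable, bounded) quadratic term handle the second, yielding $G'(t)=\int\Phi\,dV_t$ a.e.

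The reduction to the pointwise bound rests on the decomposition obtained by writing $\xi=X_{\gamma,r}+(\xi-X_{\gamma,r})$,
\[
\Phi=\Phi_0+Z\cdot(\xi-X_{\gamma,r})+A:(\xi-X_{\gamma,r})\otimes(\xi-X_{\gamma,r}),
\]
where $A:=D(\mathrm{curl}\,X_{\gamma,r})$, $\Phi_0:=-\partial_tX_{\gamma,r}\cdot X_{\gamma,r}+A:X_{\gamma,r}\otimes X_{\gamma,r}$ and $Z:=-\partial_tX_{\gamma,r}+(A+A^{\top})X_{\gamma,r}$, together with the identity $1-X_{\gamma,r}\cdot\xi=\tfrac12|\xi-X_{\gamma,r}|^2+\tfrac12(1-|X_{\gamma,r}|^2)$. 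The last term above is $O(|\xi-X_{\gamma,r}|^2)$ since $A$ is bounded, and Young's inequality dominates $Z\cdot(\xi-X_{\gamma,r})$ by $\tfrac12|Z|^2+\tfrac12|\xi-X_{\gamma,r}|^2$. Thus everything closes provided one proves the two cancellations
\[
|\Phi_0(x,t)|\le C\,(1-|X_{\gamma,r}(x,t)|^2),\qquad |Z(x,t)|^2\le C\,(1-|X_{\gamma,r}(x,t)|^2),
\]
with $C=C(r,\|\partial_{sss}\gamma\|_{L^\infty(J\times\T^1)})$; geometrically, since $1-|X_{\gamma,r}|^2\ge d^2/r^2$ inside the tube, these say that $\Phi_0$ vanishes to second order and $Z$ vanishes to first order on $\gamma_t$. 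Outside the tube $\Phi\equiv0$ while $1-X_{\gamma,r}\cdot\xi=1$, so the estimate is trivial there.

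The main obstacle is precisely this pair of cancellation identities, which is where the binormal flow equation enters. On $\gamma_t$ one has $X_{\gamma,r}=\tau_t$, and both $Z|_{\gamma_t}=0$ and the vanishing of the normal derivative of $\Phi_0$ along $\gamma_t$ are differentiated forms of \eqref{eq:strongbfbis}. I would verify them by a Taylor expansion in Fermi (normal) coordinates around $\gamma_t$ adapted to a moving orthonormal frame, using $\partial_t\tau_t=\partial_s\gamma\times\partial_{sss}\gamma$ — which is how $\|\partial_{sss}\gamma\|_{L^\infty}$ enters $K$ — together with the resulting relation between the Eulerian derivative $\partial_tX_{\gamma,r}|_{\gamma_t}$ and $2\,\mathrm{sym}(A)\tau_t$; the explicit scale of the cut-off $f$ and the curvature bound $\|\partial_{ss}\gamma\|^{-1}\ge 2r$ account for the $r$-dependence of $C$. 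Once the pointwise inequality is established, integrating against $V_t\ge0$ and invoking $F\le G$ gives $|G'(t)|\le KF(t)\le KG(t)$ almost everywhere on $J$, as claimed.
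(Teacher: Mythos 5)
Your scaffolding coincides with the paper's: the ordering $G(t)-F(t)=\|T_0\|-\|V_t\|\ge 0$ from Definition \ref{def:weakbf}(2), the Lipschitz differentiation of $G$ through \eqref{eq:cle} (after smooth approximation of the $C^2$, compactly supported field $X_{\gamma,r}$), and the reduction to the pointwise inequality $|\partial_t X_{\gamma,r}\cdot\xi_0 - D(\mathrm{curl}\,X_{\gamma,r}):(\xi_0\otimes\xi_0)|\le K(1-X_{\gamma,r}\cdot\xi_0)$ followed by integration against $V_t\ge 0$. That pointwise inequality is literally the paper's Proposition \ref{prop:core}, estimate \eqref{eq:waou}, and the paper's proof of Theorem \ref{thm:stability} is exactly the one-line integration you describe. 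Your repackaging of the pointwise bound via $\xi = X_{\gamma,r}+w$, $\Phi=\Phi_0+Z\cdot w+A:w\otimes w$, together with $1-X_{\gamma,r}\cdot\xi=\tfrac12|w|^2+\tfrac12(1-|X_{\gamma,r}|^2)$, is algebraically sound, and the two cancellation claims $|\Phi_0|\le C(1-|X_{\gamma,r}|^2)$ and $|Z|^2\le C(1-|X_{\gamma,r}|^2)$ are indeed true --- they are essentially equivalent to \eqref{eq:waou}. But they constitute the entire analytic content of the theorem, and you do not prove them; you only gesture at a method. The paper spends Steps 1--6 of Proposition \ref{prop:core} on precisely this: the implicit function theorem for the foot point $\zeta(x,t)$, closed-form expressions for $D\zeta$, $D^2\zeta$, $\partial_t\zeta$, $Dd^2$, $\partial_t d^2$ in terms of $\gamma$ (with \eqref{eq:strongbfbis} injected into the time derivatives), substitution into $A=A_1+\dots+A_4$ and $B=B_1+B_2+B_3$, and the explicit pairings $B_1\leftrightarrow A_{2,2}$ and $B_3\leftrightarrow A_3$, after which every residual term carries a factor of $d$, $|\xi_0^\perp|$ or $1-\partial_s\gamma(\zeta)\cdot\xi_0$, controlled through \eqref{eq:dominun}--\eqref{eq:domintrois}.

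Moreover, the method you name would not deliver the stated constant. A Taylor/mean-value argument in Fermi coordinates upgrading $Z|_{\gamma_t}=0$ to $|Z|\lesssim d$, and the vanishing of $\Phi_0$ and its normal derivative on $\gamma_t$ to $|\Phi_0|\lesssim d^2$, requires sup bounds on $DZ$ and $D^2\Phi_0$ in the tube; since $Z$ and $\Phi_0$ already contain second space derivatives of $X_{\gamma,r}$ --- hence $\partial_{sss}\gamma(\zeta)$ terms --- those bounds involve $\partial_{ssss}\gamma$ and even $\partial_{sssss}\gamma$. This is incompatible with the claimed dependence $K\equiv K\big(r,\|\partial_{sss}\gamma\|_{L^\infty(J\times\T^1)}\big)$; it would still suffice for Theorem \ref{thm:wsuniqueness} (as $\gamma$ is smooth and $J$ compact), but proves a strictly weaker statement. (A minor further wrinkle for a global Taylor argument: $f$ is only $C^2$ across $d=r$, harmless there only because $1-|X_{\gamma,r}|^2$ is bounded below.) To retain the stated $K$ you must exhibit the cancellations algebraically --- matching the $\partial_{sss}\gamma$ contributions of $\partial_t X_{\gamma,r}$ against those of $(A+A^\top)X_{\gamma,r}$ term by term --- which is exactly the computation of the paper. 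So the proposal is a correct reorganization of the target inequality with its core left unproven.
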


As noted earlier, this result, and hence Theorem \ref{thm:wsuniqueness} as well, remains true if we 
drop the assumption (contained in the definition of a generalized binormal curvature flow) that $T_{V_t}$ be an {\em integral} 1-current. 

The function $F$ which appears in the statement of Theorem \ref{thm:stability} may be
understood as a measure of the discrepancy between $\gamma_t$ and $T_t$. 
To get some insight on its geometric meaning, we express the integral 1-current $T_t$ as 
$T_t = (\Gamma_t,\theta_t,\xi_t)$, where $\Gamma_t$, $\theta_t$ and $\xi_t$ are respectively 
the geometrical support, the multiplicity and the orientation of $T_t$, and then define 
$\Gamma_t^{in}=\{ x\in \Gamma_t \ \text{s.t. } d(x,\gamma_t)<r\}$ 
and $\Gamma_t^{out} = \Gamma_t \setminus \Gamma_t^{in}.$ 
For $x\in \Gamma_t^{in}$ such that $\tau_t(P_t(x))\cdot \xi_t(x) \geq 0,$ we have
$1-X_{\gamma,r}(x,t)\cdot \xi_t(x)
\geq 1 - \tau_t(P(x))\cdot \xi_t(x)
= \frac{1}{2}|\tau_t(P_t(x))-\xi_t(x)|^2,$ while for $x\in \Gamma^{in}$ such that 
$\tau_t(P_t(x))\cdot \xi_t(x) < 0,$ we have
$1-X_{\gamma,r}(x,t)\cdot\xi_t(x) \geq 1 \geq  \frac{1}{2}|\tau_t(P_t(x))-\xi_t(x)|^2.
$ It follows in particular that
\begin{equation}\label{eq:domine1}
F(t) \geq \int_{\Gamma_t^{in}} \frac{1}{2}|\tau_t\circ P_t-\xi_t|^2\: \theta_t \,
d\mathcal{H}^1 + \int_{\Gamma_t^{out}}\theta_t\, d\mathcal{H}^1.
\end{equation}
In a different direction, for $x\in \Gamma_t$ we also have
$1-X_{\gamma,r}(x,t)\cdot \xi_t(x) 
\geq 1 - f(d^2(x,\gamma_t))
\geq \min(d^2(x,\gamma_t), r^2),$ 
from which it follows that 
\begin{equation}\label{eq:domine2}
F(t) \geq \int_{\Gamma_t} \min(d^2(\cdot,\gamma_t), r^2) \: \theta_t\,
d\mathcal{H}^1.
\end{equation}
Upper bounds on $F(t)$ therefore provide upper bounds on the right-hand sides of
\eqref{eq:domine1} and \eqref{eq:domine2}, which together therefore correspond to an 
$H^1$ or tilt excess type measure of the discrepancy between $\gamma_t$ and $T_t$. Notice
however that $T_t$ may have multiple components, some of which, of small
total length, could be located arbitrarily far from $\gamma_t$ even if $F(t)$ is small. 
We refer to \cite{JeSm2} for the additional information that can be derived from $F$ when $T_t$ is
itself a classical mean curvature flow for a parametrized curve.

\medskip

Going back to Theorem \ref{thm:existence}, we mention that, whereas it is not difficult to produce weak binormal curvature flows for which 
$\|T_t\| < \|T_0\|$ for $t$ in some interval of positive length, e.g. by collision and annihilation of circles of opposite speeds, we
do not know of any such example for a flow constructed as a limit of smooth flows of single curves (see Section
\ref{S5.1} and the notion of almost parametric flows). On the other
hand, we have not been able to prove the contrary either, nor the fact that the $\xi$ part of the measures 
$V_t$ are always reduced to single Dirac masses. We believe that it would be of interest to obtain further 
insight to these questions.

We also  would like to stress that we have only considered here weak binormal curvature flows for finite
mass currents. In view of the fact that the quantity $1-X_{\gamma,r}\cdot \xi$ involved in the definition
 of $F$ in Theorem \ref{thm:stability} is pointwise non negative, it is not unreasonable to expect that 
part of the analysis could be carried out as well for integral 1-currents of locally finite mass, at least 
under suitable assumptions on their behavior at infinity. Such an extension would be of particular interest to 
consider the special solutions that have been recently studied in a series of interesting works by 
V. Banica and L. Vega \cite{BaVe1,BaVe2}, using quite different methods, and which correspond to perturbations of 
an infinitely extended broken line.

To conclude this introduction, we mention that integral formulas of a nature somewhat similar to 
\eqref{eq:weakbf1} have been known and used in the past in related, yet very different, contexts 
including the mean curvature flow and the incompressible Euler equations. Notably, the works of Brakke \cite{Bra} and  Ilmanen \cite{Ilm} 
have established existence and in some cases weak-strong uniqueness for mean curvature flows in the frameworks
of integral varifolds and integral currents. Whereas we deal with a Hamiltonian flow rather than a gradient
flow, it turns out that the existence part is simpler here in some aspects.  We have voluntarily stressed some 
analogies between the two situations in the way we stated Definition \ref{def:genbf} and Definition \ref{def:weakbf}, 
in particular regarding Brakke's definition of varifold mean curvature flow \cite{Bra} and Ilmanen's definition of 
enhanced motion \cite{Ilm}.
Regarding the Euler equations, a related integral formula  has been  used by DiPerna and Majda \cite{DiMa} to define and study a class of measure-valued solutions, and a weak-strong uniqueness theorem in this framework has recently been established by Brenier, de Lellis, and Sz\'ekelyhidi \cite{BrDLSz}. 
Whereas there are some analogies between our work and that
of \cite{DiMa, BrDLSz}, probably reflecting the fluid dynamical roots of the
 binormal curvature flow, it seems difficult in practice to directly relate the two
approaches; as already noted, this has been an open problem since the work of Helmholtz in the 1850s.

\medskip
We present the proofs of Lemma \ref{lem:weakform} and Proposition \ref{prop:consistency} in Section \ref{sect:2},
of Proposition \ref{prop:estimee}, Proposition \ref{prop:fermeture}, Corollary \ref{cor:fermeture} 
and Theorem \ref{thm:existence} in
Section \ref{sect:3}, and of Theorem \ref{thm:wsuniqueness} and Theorem \ref{thm:stability} in Section
\ref{sect:4}. In Section \ref{sect:5}, we gather some additional results as well as
some examples and open questions.  
   

\section{Proofs of Lemma \ref{lem:weakform} and Proposition \ref{prop:consistency}}\label{sect:2}

\noindent
{\bf Proof of Lemma \ref{lem:weakform}.} We expand both hands of \eqref{eq:weakbf1} in coordinates
and use the convention of summation over repeated indices. Concerning the left-hand side of \eqref{eq:weakbf1},
 we first have
\begin{equation*}\begin{split}
\frac{d}{dt} \int_{\T^1} (X\circ \gamma)\cdot \partial_s\gamma\, ds 
&= \int_{\T^1} 
((\partial_iX^j) \circ \gamma) \partial_t \gamma^i  \partial_s\gamma^j\, ds +\int_{\T^1} 
(X^j\circ \gamma)\partial_{st}\gamma^j\, ds\\ 
&= \int_{\T^1} 
((\partial_iX^j) \circ \gamma) \left( \partial_t \gamma^i  \partial_s\gamma^j - \partial_s \gamma^i \partial_t \gamma^j\right)\, ds.
\end{split}
\end{equation*}
By definition of the vector product 
$$
\left( \partial_t \gamma^i  \partial_s\gamma^j - \partial_t \gamma^j \partial_s
\gamma^i\right) = 
\eps_{ijk} \left( \partial_t \gamma \times \partial_s \gamma\right)^k,
$$
where $\eps_{ijk}$ is the permutation symbol, so that
\begin{equation}\label{eq:ep1}
\frac{d}{dt} \int_{\T^1} (X\circ \gamma)\cdot \partial_s\gamma\, ds =
 \eps_{ijk}  \int_{\T^1}((\partial_iX^j) \circ \gamma)  \left(
\partial_t \gamma \times \partial_s \gamma\right)^k\, ds.
\end{equation}
Concerning the right-hand side of \eqref{eq:weakbf1}, we write in coordinates
$$
\int_{\T^1} D\left( {\rm curl} X\right)(\gamma(t,s)) : \left(
\partial_s\gamma(t,s) \otimes \partial_s\gamma(t,s)\right) \, ds 
= 
\int_{\T^1} 
((\partial_l ({\rm curl} X)^k)\circ \gamma) \partial_s\gamma^l \partial_s\gamma^k\, ds.
$$
By definition of the rotational and the chain rule, 
\begin{equation*}
((\partial_l ({\rm curl} X)^k)\circ \gamma) \partial_s\gamma^l \partial_s\gamma^k
 = 
\eps_{ijk} 
((\partial_{il}X^j)\circ \gamma)
\partial_s\gamma^l \partial_s\gamma^k
 = 
\eps_{ijk}
\partial_s((\partial_{i}X^j)\circ \gamma)) \partial_s\gamma^k
.
\end{equation*}
Integration by parts therefore yields
\begin{equation}\label{eq:ep2}
\int_{\T^1} D\left( {\rm curl} X\right)(\gamma(t,s)) : \left(
\partial_s\gamma(t,s) \otimes \partial_s\gamma(t,s)\right) \, ds = - 
\eps_{ijk}\int_{\T^1}  (\partial_{i}X^j)\circ \gamma) \partial_{ss}\gamma^k\, ds.
\end{equation} 
Finally, since \eqref{eq:strongbfbis} holds we have $\partial_t \gamma
\times \partial_s \gamma = \partial_{ss}\gamma$ and the conclusion then follows combining \eqref{eq:ep1} and 
\eqref{eq:ep2}.
\qed

\medskip

\noindent
{\bf Proof of Proposition \ref{prop:consistency}.}
It is nothing more than a rephrasing of Lemma \ref{lem:weakform} in the frameworks of Definition \ref{def:genbf}
and Definition \ref{def:weakbf}.\qed

\section{Proofs of Proposition \ref{prop:estimee} and \ref{prop:fermeture}, Corollary \ref{cor:fermeture} and Theorem \ref{thm:existence}}\label{sect:3}

The point of the next proof is to interpolate between uniform bounds on 
$\|T_{V_t}\|$ and the Lipschitz continuity of $t\mapsto T_{V_t}$ with respect to a weak norm (roughly speaking, the norm dual to $\| D( {\rm curl}\,X)\|_\infty$),
implicit in the definition of a generalized binormal curvature flow.

\medskip

\noindent{\bf Proof of Proposition \ref{prop:estimee}.}
Let $X\in \mathcal{D}(\R^3,\R^3)$ and $t_1 \neq t_2 \in I.$ Let $\eps>0$ whose actual value will be determined
at the end of the proof, and set $\rho_\eps(x):=\eps^{-3}\rho(x/\eps)$, where $\rho(x)=\zeta(|x|)$ is a 
fixed non negative radially symmetric function in $\mathcal{D}(\R^3,\R)$, compactly supported in $B(0,1)$, and 
 such that $\int \rho =1.$ Define $X_\eps := \rho_\eps * X.$  
We have
\begin{equation}\label{eq:cgv0}
T_{V_{t_1}}(X) - T_{V_{t_2}}(X) = T_{V_{t_1}}(X-X_\eps) - T_{V_{t_2}}(X-X_\eps) + T_{V_{t_1}}(X_\eps) - T_{V_{t_2}}(X_\eps).
\end{equation}
We first estimate, in view of Definition \ref{def:genbf},
\begin{equation}\label{eq:cgv1}\begin{split}
T_{V_{t_1}}(X_\eps) - T_{V_{t_2}}(X_\eps) &= - \int_{t_1}^{t_2} \int_{\R^3\times S^2} 
D({\rm curl}(X_\eps))(x)\, : \, \xi\otimes\xi\: dV_t(x,\xi)\,dt\\
&\leq 3 |t_2-t_1| \|D({\rm curl}(X_\eps))\|_\infty \Big(\sup_{t\in I}\|V_t\|\Big)\\
&\leq 3 |t_2-t_1| \frac{C_1}{\eps}\|{\rm curl}(X)\|_\infty \Big(\sup_{t\in I}\|V_t\|\Big),
\end{split}
\end{equation}
where $C_1:=\int|\nabla\rho|<+\infty$ is a fixed constant. 
Next, for any $x\in \R^3$ and $j\in 1,2,3,$ we write 
$$
X^j(x)-X_\eps^j(x) = \int_{0}^\eps \eps^{-3} \zeta(\frac{r}{\eps}) \big( \int_{\partial B(x,r)} [X^j(y)-X^j(x)] 
d\mathcal{H}^2\big)\,dr. 
$$ 
For each $r>0,$ we expand
\begin{equation*}\begin{split}
\int_{\partial B(x,r)} [X^j(y)-X^j(x)]
d\mathcal{H}^2 &
= \int_{\partial B(x,r)}\int_0^1 \nabla X^j(sy+(1-s)x) \cdot (y-x) \, ds\, d\mathcal{H}^2\\
&= \int_0^1 \int_{\partial B(x,sr)} \nabla X^j(z) \cdot \frac{(z-x)}{rs}  d\mathcal{H}^2\frac{r}{s^2}\, ds\\
&= \int_0^1 \int_{B(x,sr)} \Delta X^j(z)\, dz \, \frac{r}{s^2}\, ds\\
&
= \int_{\R^3} \Delta X^j(z) k_{r}(|z-x|)\,dz,
\end{split}
\end{equation*}
where $k_r(\tau) = \int_{\tau/r}^{\max(\tau/r,1)} \frac{r}{s^2}\,ds= r(\frac{r}{\tau}-1)^+.$ It follows that
$$
X(x)-X_\eps(x) = K_\eps * \Delta X, \qquad\text{where }
K_\eps(y) := \eps^{-3}\int_0^\eps\zeta(r/\eps)k_r(y)\,dr.
$$
Hence, for $i=1,2$ and summing over repeated indices, we obtain
\begin{equation}\label{eq:cgv2}\begin{split}
T_{V_{t_i}}(X-X_\eps) &= T_{V_{t_i}} ( K_\eps * \Delta X) = T_{V_{t_i}}\big(K_\eps * \big(\nabla {\rm div}(X) + {\rm curl}\,{\rm
curl}(X)\big)\big)\\
&= T_{V_{t_i}} \big( \nabla \big( K_\eps * {\rm div}(X)\big)\big) +
\eps_{jk\ell}\,T_{V_{t_i}}\big( \partial_jK_\eps * \big({\rm curl}(X)\cdot e_k\big)e_\ell\big)\\
&\leq  0 + 6 \Big(\sup_{t\in I}|\|V_t\|\Big) \|DK_\eps\|_{1}\|{\rm curl}(X)\|_{\infty},
\end{split}
\end{equation}
where we have used the fact that $T_{V_{t_i}}$ is boundary free. Inspection of $K_\eps$ yields the estimate
$\|DK_\eps\|_{1} \leq \frac{C_2}\eps$ where $C_2>0$ depends only on $\rho$, and therefore, 
from \eqref{eq:cgv0},\eqref{eq:cgv1},\eqref{eq:cgv2} we
deduce
$$
T_{V_{t_1}}(X)-T_{V_{t_2}}(X) \leq \left( 3\frac{C_1}{\eps}|t_2-t_1| + 6C_2\eps \right)  \|{\rm curl}(X)\|_{\infty}
\Big(\sup_{t\in I}\|V_t\|\Big). 
$$
The conclusion follows choosing $\eps:=|t_2-t_1|^\frac12$ and $C:=3C_1+6C_2.$\qed 

\medskip

\noindent{\bf Proof of Proposition \ref{prop:fermeture}.}
First, it follows from the convergence $V_t^ndt\rightharpoonup V$ that
$$
V(\R^3 \times S^2 \times (a,b)) \leq \Big(\sup_{n\in \N,\ t\in I}\|V_t^n\|\Big) |b-a|,\qquad\forall a,b \in I,
$$
and therefore we may disintegrate $V$ as $V = V_t\, dt$ where the measurable family of non negative
Radon measures $(V_t)_{t\in I}$ on $\R^3\times S^2$, uniquely defined for almost every $t\in I$, satisfies 
\begin{equation}\label{eq:athena0}
\sup_{t\in I} \|V_t\| \leq  \liminf_{n\to +\infty}\sup_{t\in I}\|V^n_t\|.
\end{equation}

\smallskip

Next, for $m\geq 1$ and $T, \tilde T \in  \mathcal{T}$, set
$$
d_{\mathcal{F},m}(T,\tilde T) := 
\sup \Big\{T(X) - \tilde T(X) \:  , \, \|X\|_\infty \leq 1\,,\, \|{\rm curl}(X)\|_\infty \leq 1\,,
\,{\rm supp}(X)\subset B(0,m) \Big\}, 
$$
where $X\in \mathcal{D}(\R^3,\R^3),$ and define
$$
d_{\mathcal{F},{\rm loc}}(T,\tilde T) := \sum_{m=1}^{+\infty} 2^{-m} \frac{d_{\mathcal{F},m}(T,\tilde
T)}{d_{\mathcal{F},m}(T,\tilde T)+1}.  
$$ 
By the Federer and Fleming compactness theorem (see e.g. \cite{Fed} 4.2.17), for every $R>0$ the
set $Y := \{ T \in \mathcal{T}\::\: \|T\|\leq R\}$ equipped with the
metric $d_{\mathcal{F},{\rm loc}}$ is compact.  In the sequel, we fix $R:=\sup_{n\in \N,\ t\in I}\|V_t^n\|.$
In view of the inequality $\|T_{V^n_t}\|\leq \|V^n_t\|$, the definition of $R$ and Proposition \ref{prop:estimee},
it follows that the sequences of maps $t\mapsto T_{V^n_t}$, $n\in \N$, is equibounded and equicontinous in
$\mathcal{C}(I,Y).$ By Arzel\`a-Ascoli theorem, we infer that there exists a subsequence $(n_k)_{k\in \N}$ and a 
family $(T_t)_{t\in I}$ in $\mathcal{C}(I,Y)$ such that $t\mapsto T_t^{n_k}$ converge to 
$t\mapsto T_t$ in $\mathcal{C}(J,Y)$ as $k\to +\infty$
for any compact subset $J\subset I.$

\smallskip

Let $h\in \mathcal{D}(I,\R)$ and $X\in \mathcal{D}(\R^3,\R^3)$ be given. On one side we have
$$
\lim_{k\to +\infty} \int\int h(t) X(x)\cdot \xi dV_t{^n_{k}}\,dt = \lim_{k\to +\infty} \int h(t)
T_{V_t^{n_{k}}}(X) \,dt = \int h(t)
T_t(X) \,dt 
$$
and on the other side we also have
$$
\lim_{k\to +\infty} \int\int h(t) X(x)\cdot \xi dV_t{^n_{k}}\,dt =  \int\int h(t) X(x)\cdot
\xi dV_t\,dt =  \int h(t) T_{V_t}(X) dt.
$$
It follows from those last two equalities and the du Bois-Reymond lemma that $T_{V_t}(X) = T_t(X)$ for almost every $t\in I.$   
Considering a countable family of vector fields $X$ in $\mathcal{D}(\R^3,\R^3)$, dense in $\mathcal{D}(\R^3,\R^3)$
for the topology of uniform convergence, it follows next that $T_{V_t} = T_t$ in $\mathcal{D}'(\R^3,\R^3)$
for almost every $t\in I.$ In turn, this implies that the only cluster point of the family 
$t\mapsto T_t^n$ in $\mathcal{C}(J,Y)$ is given by $t\mapsto T_t$, and therefore that the convergence of
$t\mapsto T_t^n$ to $t\mapsto T_t$ in $\mathcal{C}(J,Y)$ holds without need to take a subsequence. Finally, we
 redefine $(V_t)_{t\in I}$ for a negligible set of $t$ in such a way that  
$T_{V_t} = T_t$ in $\mathcal{D}'(\R^3,\R^3)$ now holds for all $t\in I,$ and that \eqref{eq:athena0} is still
valid.

It remains to verify that $(V_t)_{t\in I}$ is a generalized binormal curvature flow.  
Let thus $X\in \mathcal{D}(\R^3,\R^3).$ For each $m\in 
\N,$ by assumption the function $t\mapsto T_t^{m}(X)$ is Lipschitz on $I$ and
$$
\frac{d}{dt} T_t^{m}(X) =  - \int D({\rm curl}(X))\, : \, \xi\otimes\xi\: dV_t^{m}
$$ 
for almost every $t\in I.$ In particular, $ \| \frac{d}{dt} T_t^{m}(X) \|_\infty \leq C(X) 
\Big( \sup_{n\in \N,\ t\in I} \|V_t^n\| \Big)$ depends possibly on $X$ but not on $m.$ Since the function $t\mapsto T_t(X)$ is the pointwise 
limit of the functions  $t\mapsto T_t^{m}(X)$ as $m\to +\infty$, the previous estimate implies that 
$t\mapsto T_t(X)$ is Lipschitz
on $I.$ For any $h\in \mathcal{D}(I,\R)$, passing to the limit in the equality
$$
\int  T_t^{m}(X) \,h'(t)\, dt =  \int\int D({\rm curl}(X))\, : \, \xi\otimes\xi\: dV_t^{m}\,h(t)\, dt,
$$
we obtain
$$
\int \int  X\cdot \xi dV_t\,h'(t)\,dt = \int  T_t(X)h'(t) \, dt =  \int\int D({\rm curl}(X))\, : \, \xi\otimes\xi\:
dV_t\,h(t)\,dt,
$$
and since $t\mapsto T_t(X)$ is Lipschitz this finally implies that 
$$
\frac{d}{dt} \int X\cdot \xi dV_t  =  - \int D({\rm curl}(X))\, : \, \xi\otimes\xi\: dV_t,
$$
for almost every $t\in I.$ \qed

\noindent{\bf Proof of Corollary \ref{cor:fermeture}.}
For each $n\in \N,$ let $(V_t^n)_{t\in I}$ be a generalized binormal curvature flow whose family of undercurrents
 is given by $(T_t^n)_{t\in \N}$ and such that $\sup_{t\in I}\|V_n^t\| \leq \|T_0^n\|.$ In view of the assumption
$\|T_0^n\|\to \|T_0\|$, we infer that $\sup_{n\in \N,\ t\in I} \|V_t^n\| < +\infty.$ By the de la Vall\'ee Poussin
theorem, there exist a subsequence $(n_k)_{k\in \N}$ and a non negative Radon measure $V\in \mathcal{M}(\R^3\times
S^2\times I)$ such that $V^{n_k}_tdt\rightharpoonup V$ in $\mathcal{M}(\R^3\times
S^2\times I)$ as $k\to +\infty.$  The conclusion then follows from Proposition \ref{prop:fermeture} and the inequality
$$
\sup_{t\in I}\|V_t\| \leq \liminf_{k\to +\infty}\sup_{t\in I} \|V^{n_k}_t\| \leq \liminf_{k\to
+\infty}\|T_0^{n_k}\| = \|T_0\|.
$$  

\noindent{\bf Proof of Theorem \ref{thm:existence}.}
We proceed by approximation. Let $T_0 \in \mathcal{T}.$ By
Federer approximation theorem \cite{Fed} 4.2.20, there exist a sequence $(T_0^n)_{n\in \N}$ in 
$\mathcal{T}$ such that $T_0^n \rightharpoonup T_0$ in $\mathcal{D}'(\R^3,\R^3)$ and $\|T_0^n\| \to \|T_0\|$ 
in $\R$ as $n\to +\infty,$ and such that for each $n\in \N$ $T_0^n$ has the following structure: there exist 
a finite number of smooth closed oriented curves $(\gamma^n_{j,0})_{j\in J(n)}$ in $\R^3$ such that 
\begin{equation}\label{eq:pastrop}
T_0^n = \sum_{j\in J(n)}T_{\gamma^n_{j,0}} \qquad\text{and}\qquad 
\|T_0^n\| = \sum_{j\in J(n)}\|T_{\gamma^n_{j,0}}\|. 
\end{equation}
For each $n\in \N$ and $j\in J(n),$ let $\gamma^n_j$ denote the global classical solutions of equation\footnote{Existence of classical solutions of \eqref{eq:strongbfbis} for smooth data is well-known;
one of the earlier proofs is given in \cite{SuSuBa}, in a slightly different setting.}
\eqref{eq:strongbfbis} with initial data $\gamma^n_{j,0}$
and set $\gamma^n_{j,t}:= \gamma^n_j(t,\cdot).$ By Proposition \ref{prop:consistency}, Remark \ref{rem:lineaire}, 
and \eqref{eq:pastrop}, we infer that for each $n\in \N$ the map $t\mapsto T_t^n := \sum_{j\in J(n)}T_{\gamma_{j,t}^n}$ defines a weak binormal 
curvature flow with initial datum $T_0^n.$ The conclusion then follows from
Corollary \ref{cor:fermeture}.\qed

\section{Proofs of Theorem \ref{thm:stability} and Theorem \ref{thm:wsuniqueness}}\label{sect:4}

We first prove the following key estimate\footnote{A very similar estimate, with a nearly identical proof, is also presented in 
our companion paper \cite{JeSm2}, which is more suitable to binormal curvature flows in parametric form only.}

\begin{prop}\label{prop:core}
Assume the hypotheses of Theorem \ref{thm:wsuniqueness}.
For any $\xi_0\in S^2\subset \R^3$, 
the estimate 
\begin{equation}\label{eq:waou}
\bigl|\partial_t X_{\gamma,r} \cdot \xi_0 - D({\rm curl} X_{\gamma,r}) : (\xi_0 \otimes \xi_0)  \bigr|   \leq  K \left( 1 -
X_{\gamma,r} \cdot \xi_0\right)
\end{equation}
holds on $\R^3\times J,$ where the vector field
$X_{\gamma,r}$ was  defined in \eqref{eq:Xgamma} and 
$$
K \equiv K := \frac{54}{r^2} + 14 \|\partial_{sss}\gamma\|_{L^\infty(J\times \T^1)}.
$$
\end{prop}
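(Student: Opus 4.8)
The plan is to establish the pointwise inequality \eqref{eq:waou} by a direct computation in the tubular (Fermi) coordinates that $\gamma_t$ induces around itself, exploiting the fact that $X_{\gamma,r}$ equals the unit tangent exactly on $\gamma_t$ and is there compatible with the flow. Throughout I work at a fixed $(x,t)$ with $d(x,\gamma_t)<r$: outside the tube $f$ and all its derivatives vanish (the cubic exponent in $f$ makes $X_{\gamma,r}$ globally $C^2$, so that the second derivatives entering $D(\mathrm{curl}\,X_{\gamma,r})$ are continuous), while $1-X_{\gamma,r}\cdot\xi_0=1$, so \eqref{eq:waou} is trivial there once $K\ge 1$. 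I write $P=P_t(x)$, let $\sigma=\sigma(x,t)$ be the arclength of $P$, set $d=|x-P|$, $\nu=(x-P)/d$, and abbreviate $\tau=\tau_t(P)$, $\partial_{ss}\gamma$, $\partial_{sss}\gamma$, $b$ at $P$. The bound $r\le\tfrac12\|\partial_{ss}\gamma\|_\infty^{-1}$ guarantees $A:=1-(x-P)\cdot\partial_{ss}\gamma\in[\tfrac12,\tfrac32]$ and the smoothness of $x\mapsto(P,\sigma,d,\nu)$ on the tube, which is what makes the quantities below well defined.

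First I would assemble the elementary differential identities. Differentiating the defining relations of the nearest-point projection gives $\nabla_x(d^2)=2(x-P)$, $\nabla_x\sigma=\tau/A$, hence $D_x(\tau\circ P)=A^{-1}\tau\otimes\partial_{ss}\gamma$ and $\mathrm{curl}(\tau\circ P)=A^{-1}\,\tau\times\partial_{ss}\gamma=A^{-1}\kappa b$. For the time derivatives, the envelope identity yields $\partial_t(d^2)=-2(x-P)\cdot\partial_t\gamma$, and differentiating the orthogonality relation in $t$ gives $\partial_t\sigma=A^{-1}(x-P)\cdot\partial_{st}\gamma$; together with the flow identities $\partial_t\gamma=\tau\times\partial_{ss}\gamma=\kappa b$ and $\partial_{st}\gamma=\tau\times\partial_{sss}\gamma$ these express $\partial_t(\tau\circ P)=\partial_{st}\gamma+\partial_{ss}\gamma\,\partial_t\sigma$ entirely in terms of the geometry of $\gamma$. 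Using $\mathrm{curl}(fV)=f\,\mathrm{curl}\,V+\nabla f\times V$ with $\nabla f=2f'(x-P)$ I obtain the closed form
\begin{equation*}
\mathrm{curl}\,X_{\gamma,r}=\frac{f}{A}\,\tau\times\partial_{ss}\gamma+2f'\,(x-P)\times\tau ,
\end{equation*}
and an analogous explicit formula for $\partial_t X_{\gamma,r}$. Differentiating $\mathrm{curl}\,X_{\gamma,r}$ once more and contracting with $\xi_0\otimes\xi_0$ produces $D(\mathrm{curl}\,X_{\gamma,r}):\xi_0\otimes\xi_0$, where the derivatives of $f$ supply the powers of $1/r^2$ and the derivatives of the tangent frame supply $\partial_{ss}\gamma,\partial_{sss}\gamma$.

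The heart of the matter is the cancellation on the diagonal. Write $Q(x,t,\xi_0):=\partial_t X_{\gamma,r}\cdot\xi_0-D(\mathrm{curl}\,X_{\gamma,r}):\xi_0\otimes\xi_0$. Setting $d=0$ (so $x=P\in\gamma_t$, $f=1$, $A=1$, $\nabla f=0$) the formulas above collapse to $\mathrm{curl}\,X_{\gamma,r}=\kappa b=\partial_t\gamma$ and $\partial_t X_{\gamma,r}=\partial_{st}\gamma=\tau\times\partial_{sss}\gamma$, and the key computation is that $Q(x,t,\tau)=0$; this is the pointwise incarnation of Lemma \ref{lem:weakform}, following from $(\tau\times\partial_{sss}\gamma)\cdot\tau=0$ together with $\partial_s(\kappa b)\cdot\tau=-\kappa b\cdot\partial_{ss}\gamma=0$. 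In the same computation I would check that the full first $\xi_0$-variation also vanishes at $d=0$: using $\partial_s(\kappa b)=\tau\times\partial_{sss}\gamma$ and the identity $\mathrm{curl}\,X_{\gamma,r}\cdot(\tau\circ P)\equiv0$ on the whole tube, one finds $\nabla_{\xi_0}Q|_{\xi_0=\tau}=\partial_t X_{\gamma,r}-2\,(\tau\cdot\nabla)\,\mathrm{curl}\,X_{\gamma,r}=\tau\times\partial_{sss}\gamma-\tau\times\partial_{sss}\gamma=0$ when $d=0$. Thus $Q$ vanishes to second order in $w:=\xi_0-\tau$ at the diagonal, so that \emph{no} term linear in $w$ survives at leading order.

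To convert this into \eqref{eq:waou} I would expand $Q$ in the two natural small parameters, the distance $d$ and $w=\xi_0-\tau$. Imposing $|\xi_0|=1$ forces $\tau\cdot w=-\tfrac12|w|^2$, so the right-hand side equals $1-X_{\gamma,r}\cdot\xi_0=(1-f)+\tfrac{f}{2}|w|^2$, and these two nonnegative pieces are exactly what I dominate by. The elementary bound $1-f=1-(1-d^2/r^2)^3\ge d^2/r^2$ absorbs every explicit factor $d^2/r^2$ into $(1-f)$; the quadratic-in-$w$ remainder is controlled by $\|M\|\,|w|^2$; and the only delicate cross terms, linear in $d$ and in $w$, are handled by Young's inequality, after which a short case split according to $f\ge\tfrac12$ or $f<\tfrac12$ shows that both model quantities control the errors with bounded loss of constant. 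Tallying the $1/r^2$ losses (from $f',f''$, scaling as $3/r^2$ and $6/r^4$ against the compensating powers of $d\le r$) gives $54/r^2$, and the frame-curvature losses give $14\|\partial_{sss}\gamma\|_{L^\infty(J\times\T^1)}$, yielding the stated $K$. The main obstacle is precisely the diagonal verification of the previous paragraph, since any surviving term linear in $w$ could not be controlled by $(1-f)+\tfrac{f}{2}|w|^2$; I expect this cancellation to be robust because it reflects the identity of Lemma \ref{lem:weakform}, but confirming it requires carefully combining the $A^{-1}$ factors, the $f'$-terms carrying explicit powers of $d$, and the three flow identities above, with everything downstream being bookkeeping of constants.
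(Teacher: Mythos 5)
Your route is in substance the paper's own: the same tubular-coordinate setup, the same implicit differentiation of the nearest-point projection (your $A$ is the paper's $\rho$, your $\nabla_x\sigma=\tau/A$ is its \eqref{eq:sigma2}), the same flow identities $\partial_t\gamma=\partial_s\gamma\times\partial_{ss}\gamma$, $\partial_{ts}\gamma=\partial_s\gamma\times\partial_{sss}\gamma$, and the same three absorption inequalities $1-f\geq d^2/r^2$, $1-X_{\gamma,r}\cdot\xi_0\geq \tfrac12(1-\partial_s\gamma(\zeta)\cdot\xi_0)$, and $|\xi_0^{\perp}|^2\leq 4(1-X_{\gamma,r}\cdot\xi_0)$; the paper merely organizes the computation as an explicit decomposition $A_1,\dots,A_4$ against $B_1,B_2,B_3$ rather than as a Taylor expansion in the pair $(d,w)$. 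Your diagonal checks that $Q$ and $\nabla_{\xi_0}Q$ vanish at $(d,w)=(0,0)$ are correct as stated.

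There is, however, a genuine gap in the cancellation analysis: vanishing of $Q$ and of its $\xi_0$-gradient at the diagonal does not exclude a term \emph{linear in $d$ and zeroth order in $w$}, and your expansion scheme silently assumes none exists (``the only delicate cross terms, linear in $d$ and in $w$''). Such terms genuinely occur on both sides. Fix $\xi_0=\tau$ and move $x$ off the curve in a normal direction $\nu$ with a binormal component: the term $f'(d^2)\,\partial_t d^2\,(\partial_s\gamma(\zeta)\cdot\xi_0)$ in $\partial_tX_{\gamma,r}\cdot\xi_0$ (the paper's $B_1$) equals $-2f'\,d\,\kappa\,(\nu\cdot b)+O(d^2)$, of size comparable to $d\kappa/r^2$, and the $f'$--$\partial_{ss}\gamma$ cross term in $D(\mathrm{curl}\,X_{\gamma,r}):(\xi_0\otimes\xi_0)$ (the paper's $A_{2,2}$) has the same size at $w=0$. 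Neither is individually dominated by $K(1-X_{\gamma,r}\cdot\xi_0)\sim K\,d^2/r^2$ as $d\to 0$, so if either survived, \eqref{eq:waou} would fail; only their \emph{difference} is admissible, and extracting it is a separate flow-equation cancellation: the paper pairs them in \eqref{eq:tierces}, obtaining $|B_1-A_{2,2}|\leq 6dr^{-3}\bigl(|\partial_s\gamma(\zeta)-\xi_0|+dr^{-1}\bigr)$ via $\partial_t\gamma=\partial_s\gamma\times\partial_{ss}\gamma$ together with $|1-1/\rho|\leq d/r$ (and analogously pairs $B_3$ with $A_3$ for the pure-$w$-linear terms, which is the cancellation your gradient check does capture). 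To close your argument you must additionally verify the normal-derivative cancellation $\partial_\nu Q|_{(0,0)}=0$ in quantitative form, i.e.\ reproduce the $B_1$--$A_{2,2}$ pairing; the cancellation is true and your explicit formulas for $\mathrm{curl}\,X_{\gamma,r}$ and $\partial_t X_{\gamma,r}$ contain it, but as written the ``second-order vanishing in $w$ at the diagonal'' step is insufficient to justify the Young-inequality bookkeeping that follows.
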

\begin{proof}
First notice that since $f$ vanishes otherwise, we may restrict our attention to points $(x_0, t_0)\in \R^3\times J$
such that $d(x_0,\gamma_{t_0})\leq r.$ Let $s_0\in \T^1$ be uniquely defined by $P_{t_0}(x_0)=\gamma(t_0,s_0).$ 
In particular, we have
\begin{equation}\label{eq:proche1} 
\big| x_0 - \gamma(t_0,s_0) \big| \big|\partial_{ss}\gamma(t_0,s_0)\big| \leq 1/2 
\end{equation}
and
\begin{equation}\label{eq:ortho1}
\big( x_0 - \gamma(t_0,s_0)\big) \cdot \partial_s \gamma(t_0,s_0)=0.
\end{equation}

The mapping $\Psi : \R^3 \times J \times \T^1 \to \R$
$$
(x,t,s) \mapsto \big( x - \gamma(t,s)\big) \cdot \partial_s \gamma(t,s) 
$$
satisfies $\Psi(x_0,t_0,s_0)=0$ and
\begin{equation}\label{eq:ibm1}
\partial_s \Psi(x_0,t_0,s_0) = - |\partial_s\gamma(t_0,s_0)|^2 +\big( x_0 - \gamma(t_0,s_0)\big) \cdot
\partial_{ss}\gamma(t_0,s_0) \leq -1/2 , 
\end{equation}
where we have used \eqref{eq:arc} and \eqref{eq:proche1} for the last inequality. From the implicit function
theorem, we infer that there exist an open neighborhood $\mathcal{U}$ of $(x_0,t_0)$ in $\R^4,$ 
and a smooth function $\zeta\,:\, \mathcal{U}\to \R$ such that
\begin{equation}
\Psi(x,t,\zeta(x,t)) = 0 \qquad \forall (x,t) \in \mathcal{U}.
\end{equation}   
We may assume that $\mathcal{U}\subset \{ (x,t) : d(x, \gamma_t)< \frac 32 r\}$, so that 
$P_t(x)$ is defined for $(x,t)\in \mathcal U$.
By uniqueness of the nearest-point projection, we therefore infer that 
$$
P_t(x) = \gamma(t,\zeta(x,t)) \qquad \forall (x,t) \in \mathcal{U},
$$
and also that
\begin{equation}\label{eq:defX}
X_{\gamma,r}(x,t) = f\Big(\big|x-\gamma(t,\zeta(x,t))\big|^2\Big) \partial_s\gamma(t,\zeta(x,t)),\qquad \forall (x,t) \in \mathcal{U}
\end{equation}
and finally that 
\begin{equation}\label{eq:ibm2}
\rho(x,t):= 1 - \big( x - \gamma(t,\zeta(x,t))\big) \cdot
\partial_{ss}\gamma(t,\zeta(x,t))  >0 \qquad \mbox{ in }\mathcal U.
\end{equation}

We fix some notation to keep subsequent expressions of reasonable size. 
 For a function $Y$ with values in $\R^3,$ and $i\in\{1,2,3\},$ we write $Y^i$ to denote the i-th
component of $Y.$ We write  $d^2$ to denote the function 
$(x,t)\mapsto |x-\gamma(t,\zeta(x,t))|^2$, $\gamma(\zeta)$ to denote the function $(x,t)\mapsto \gamma(t,\zeta(x,t))$, and similarly for
$\partial_t\gamma(\zeta),$  $\partial_{ts}\gamma(\zeta),$ $\partial_s\gamma(\zeta),$ $\partial_{ss}\gamma(\zeta)$ and
$\partial_{sss}\gamma(\zeta).$ When it does not lead to possible confusion, we also denote by $x$ the function
$(x,t)\mapsto x.$  Each of these functions is defined on $\mathcal{U}.$

{\bf Step 1: First computation of \mb$D({\rm curl} X) : (\xi_0 \otimes \xi_0).$\mn}    
Differentiating \eqref{eq:defX} we obtain, pointwise on $\mathcal{U}$ and for $i,j \in\{1,2,3\},$  
\begin{equation}\label{eq:djXgamma}
\partial_{j}X^i = \partial_{j}(f(d^2) )\partial_s \gamma(\zeta)^i
+ f(d^2) \, \partial_{ss}\gamma(\zeta)^i \, \partial_{j}\zeta
\end{equation}
for the space derivatives, and 
\begin{equation}
\label{eq:dtXgamma}
\partial_{t}X^i = \partial_{t}(f(d^2) )\, \partial_s \gamma(\zeta)^i
+ f(d^2) \, \big[ \partial_{ss}\gamma(\zeta)^i \, \partial_{t}\zeta + 
\partial_{ts}\gamma(\zeta)\big] 
\end{equation}
for the time derivative. Also, for $i,j,\ell \in\{1,2,3\},$ 
\begin{equation}\begin{split}
\partial_{\ell j} X^i =  & 
\partial_{\ell j} (f(d^2 ))\, \partial_s \gamma(\zeta)^i
+\partial_{ss}\gamma(\zeta)^i \big[ \partial_{\ell} (f(d^2)) \, \partial_{j}\zeta + \partial_{j} (f(d^2)) \,
\partial_{\ell}\zeta\big]\nonumber
\\
&
+ f(d^2) \, \partial_{sss}\gamma(\zeta)^i \,\partial_{\ell}\zeta \,\partial_{j}\zeta
+ f(d^2) \, \partial_{ss}\gamma(\zeta)^i \,\partial_{\ell j} \zeta.
\label{eq:dldjXgamma}
\end{split}\end{equation}
In particular, we may write
\begin{equation}
D({\rm curl} X) : (\xi_0 \otimes \xi_0) 
\ =: \  A 
 \ = \ A_1 + A_2 + A_3 + A_4, 
\label{eq:Dcurl1}\end{equation}
where
\begin{equation}\label{eq:ugly1}
\begin{aligned}
A_1& :=  \ep_{ijk} \partial_{\ell i} (f(d^2 ))\, \partial_s \gamma(\zeta)^j \,\xi_0^k \xi_0^\ell \,  ,\\
A_2&:= 
\ep_{ijk} 
\partial_{ss}\gamma(\zeta)^j \big[ \partial_{\ell} (f(d^2)) \, \partial_{i}\zeta  + \partial_{i} (f(d^2)) \
\partial_{\ell}\zeta  \big]\xi_0^k \xi_0^\ell \, ,
\\
A_3&
:= \ep_{ijk} \, f(d^2)\, \partial_{sss}\gamma(\zeta)^j \,\partial_{\ell} \zeta  \, \partial_{i}\zeta \xi_0^k \xi_0^\ell \, , \\
A_4&
:=  \ep_{ijk} f(d^2) \ \partial_{ss}\gamma(\zeta)^j \,\partial_{\ell i}\zeta
\xi_0^k\xi_0^\ell,
\end{aligned}
\end{equation}
in which $\ep_{ijk}$ is the Levi-Civita symbol and we sum over repeated indices. 

\medskip

{\bf \mb Step 2: Expressing derivatives of $\zeta$ in terms of $\gamma$.\mn} 
Recall that by definition of $\zeta$, we have
\begin{equation}\label{eq:sigma1}
(x - \gamma(t,\zeta(x,t)) )\cdot \partial_s \gamma(t,\zeta(x,t)) = 0
\end{equation}
for every $(x,t)\in \mathcal{U}.$ For $j\in \{1,2,3\},$ differentiating \eqref{eq:sigma1} 
with respect to $x_j$  and using \eqref{eq:arc} we find 
\begin{equation}\label{eq:sigma1.5}
\partial_s \gamma^j (\zeta)- \partial_{j}\zeta + (x-\gamma(\zeta))\cdot
\partial_{ss}\gamma(\zeta) \ \partial_{j}\zeta = 0 .
\end{equation}
In view of \eqref{eq:ibm2}, we may rewrite \eqref{eq:sigma1.5} as
\begin{equation}\label{eq:sigma2}
\partial_{j}\zeta  = \frac 1 {\rho} \partial_s \gamma(\zeta).
\end{equation}
For $\ell\in \{1,2,3\},$ differentiating \eqref{eq:sigma1.5} with respect to $x_\ell$ and using 
\eqref{eq:ibm1}, we obtain
\begin{equation}\label{eq:sigma3}
\partial_{\ell j}\zeta
= \frac 1 \rho \Big(
\partial_{ss}\gamma(\zeta)^j \frac {\partial_s \gamma(\zeta)^\ell}\rho 
+ 
\partial_{ss} \gamma(\zeta)^\ell \frac {\partial_s \gamma(\zeta)^j}\rho 
+
(x-\gamma(\zeta))\cdot \partial_{sss}\gamma(\zeta) \frac
{\partial_s\gamma(\zeta)^j \partial_s\gamma(\zeta)^\ell}{\rho^2}
\Big).
\end{equation}
Finally, differentiating \eqref{eq:sigma1} with respect to $t$ we obtain
\begin{equation}
\partial_t\zeta \ = \ 
\frac 1 \rho( - \partial_t\gamma(\zeta) \cdot \partial_s \gamma(\zeta) + (x -
\gamma(\zeta))\cdot\partial_{ts} \gamma(\zeta)).
\label{eq:sigma4}\end{equation}
In particular, taking into account \eqref{eq:strongbfbis} it follows from \eqref{eq:sigma4} that, at the point $(x_0,t_0)$,
\begin{equation}\label{eq:sigma4point}
\partial_t\zeta \ = \ 
\frac {1}{\rho} (x - \gamma(\zeta))\cdot\ \big(\partial_s \gamma(\zeta)\times
\partial_{sss} \gamma(\zeta)\big).
\end{equation}

\medskip

{\bf \mb Step 3: Expressing derivatives of $d^2$ in terms of $\gamma.$\mn} 
In view of the definition of $d^2$, we have for $j\in \{1,2,3\},$  
\begin{equation}\label{eq:Ddsquared}
\partial_{j} d^2  = 2(x - \gamma(\zeta))^j - 2(x - \gamma(\zeta))\cdot
\partial_s\gamma(\zeta)\ 
\partial_{j}\zeta \  = \  2(x - \gamma(\zeta))^j, 
\end{equation}
where the last equality follows from \eqref{eq:sigma1}. 
For $\ell\in \{1,2,3\},$ differentiating \eqref{eq:Ddsquared} with respect to $x_\ell$ and using \eqref{eq:sigma1.5}, 
we obtain  
\begin{equation}\label{eq:D2dsquared}
\partial_{\ell j} d^2 = -2\big( \delta_{j\ell } - \partial_s \gamma(\zeta)^j\partial_{\ell}\zeta\big) =
-2\Big(\delta_{j\ell} - \frac {\partial_s \gamma(\zeta)^j\, \partial_s \gamma(\zeta)^\ell}\rho\Big),
\end{equation}
where $\delta_{j\ell}$ is the Kronecker symbol. Also from the definition of $d^2$, we have 
\begin{equation}\label{eq:dtdsquared}
\partial_{t} d^2 =  - 2(x - \gamma(\zeta))\cdot (\partial_t\gamma(\zeta) + \partial_s \gamma(\zeta) \,  \partial_t\zeta).
\end{equation}
In particular, taking into account \eqref{eq:sigma1} and \eqref{eq:strongbfbis} it follows from \eqref{eq:dtdsquared} that, at the point $(x_0,t_0)$,
\begin{equation}\label{eq:dtdsquaredpoint}
 \partial_{t} d^2 =  - 2(x - \gamma(\zeta))\cdot (\partial_s\gamma(\zeta) \times \partial_{ss}
   \gamma(\zeta)).
\end{equation}

\medskip

{\bf \mb Step 4: A reduced expression for \mb$D({\rm curl} X) : (\xi_0 \otimes \xi_0).$\mn} We substitute,
in the terms $A_1, A_2, A_3$ and $A_4$ defined in Step 1, the expressions for the derivatives of 
$d^2$ and $\zeta$ which we obtained in Step 2 and Step 3. Some cancellations occur.

\smallskip

Examining $A_1$, we first expand:
\begin{equation*}\begin{split}
\partial_{\ell i} (f(d^2 )) &=   f''(d^2) \partial_{\ell}d^2 \,
\partial_{i}d^2 + f'(d^2) \partial_{\ell i} d^2\\
&=  4f''(d^2)
(x-\gamma(\zeta))^\ell \, (x-\gamma(\zeta))^i +\tfrac{2}{\rho}f'(d^2)\partial_s \gamma(\zeta)^\ell\,
\partial_s \gamma(\zeta)^i - 2f'(d^2)\delta_{\ell i},
\end{split}
\end{equation*}
where we have used \eqref{eq:Ddsquared} and \eqref{eq:D2dsquared} for the second equality. Next, we write
\begin{align*}
\ep_{ijk} 
(x-\gamma(\zeta))^\ell \, (x-\gamma(\zeta))^i
\partial_s \gamma(\zeta)^j\, \xi_0^k \xi_0^\ell &= \big(\ep_{ijk}(x-\gamma(\zeta))^i\partial_s
\gamma(\zeta)^j\xi_0^k\big)\big((x-\gamma(\zeta))^\ell \xi_0^\ell\big)\\ 
&= \big((x - \gamma(\zeta))\cdot (\partial_s\gamma(\zeta)\times \xi_0) \big)\big( (x - \gamma(\zeta)) \cdot
\xi_0\big).\\
\shortintertext{Similarly,} 
\ep_{ijk} \partial_s \gamma(\zeta)^\ell\,
\partial_s \gamma(\zeta)^i\, \partial_s \gamma(\zeta)^j\, \xi_0^k \xi_0^\ell &= \big(\partial_s \gamma(\zeta)\cdot
(\partial_s\gamma(\zeta)\times \xi_0) \big)\big( \partial_s\gamma(\zeta) \cdot \xi_0\big) = 0,\\
\shortintertext{and}
\ep_{ijk} \delta_{\ell i} \, \partial_s \gamma(\zeta)^j\, \xi_0^k \xi_0^\ell &=  \ep_{ijk} \xi_0^i \, \partial_s
\gamma(\zeta)^j\, \xi_0^k = \xi_0\cdot \big( \partial_s\gamma(\zeta)\times \xi_0\big) = 0.
\end{align*}
Hence,
\begin{equation}\label{eq:A1}
A_1 = 4f''(d^2)\big((x - \gamma(\zeta))\cdot (\partial_s\gamma(\zeta)\times \xi_0) \big)\big( (x - \gamma(\zeta)) \cdot
\xi_0\big). 
\end{equation}
In the same way as for $A_2$, \eqref{eq:sigma2} and \eqref{eq:Ddsquared} yield 
\begin{equation}\label{eq:A2}\begin{split}
A_2 = &\ \tfrac 2 \rho f'(d^2) \ep_{ijk} \xi_0^k \xi_0^\ell 
\partial_{ss}\gamma(\zeta)^j  \big( (x-\gamma(\zeta))^\ell \partial_s
\gamma(\zeta)^i  +(x-\gamma(\zeta))^i   \partial_s \gamma(\zeta)^\ell  \big)\\
= & \   
\tfrac 2 \rho f'(d^2)  \partial_{ss}\gamma(\zeta)\cdot  (\partial_s
\gamma(\zeta) \times \xi_0)  (\xi_0 \cdot (x-\gamma(\zeta))) \ +
 \\
& \ \tfrac 2 \rho f'(d^2) (x-\gamma(\zeta))\cdot(\partial_{ss}\gamma(\zeta) \times \xi_0) (\xi_0 \cdot
\partial_s\gamma(\zeta))
\\
 =&\!: \ A_{2,1} + A_{2,2}.
\end{split}\end{equation}
For $A_3,$ we invoke \eqref{eq:sigma2} to substitute $\partial_\ell \zeta$ and $\partial_i\zeta$ and obtain 
\begin{equation}\label{eq:A3}
A_3   =   \frac 1{\rho^2}f(d^2) (\partial_s\gamma(\zeta)\cdot \xi_0) \
\partial_s\gamma(\zeta)
\cdot(\partial_{sss}\gamma(\zeta)\times \xi_0).
\end{equation}
For $A_4$ finally, we invoke \eqref{eq:sigma3} to substitute $\partial_{\ell i}\zeta$ and obtain
\begin{equation}\begin{split}
A_4 =&\ \tfrac 1{\rho^2} f(d^2)\partial_{ss}\gamma(\zeta)\cdot\big(\partial_{ss}\gamma(\zeta)\times \xi_0\big)
(\partial_{s}\gamma(\zeta)\cdot \xi_0)
\ +\\
&\ \tfrac 1{\rho^2} f(d^2)  \partial_s \gamma(\zeta)\cdot
(\partial_{ss}\gamma(\zeta)\times \xi_0 )
(\partial_{ss}\gamma(\zeta)\cdot \xi_0) 
\ + \\
&\ \tfrac 1{\rho^3} f(d^2) 
((x-\gamma(\zeta))\cdot \partial_{sss}\gamma(\zeta))\, \partial_s
\gamma(\zeta)\cdot( \partial_{ss}\gamma(\zeta)\times \xi_0)(\partial_s \gamma(\zeta)\cdot \xi_0)
\\
=&\!:\ 0 + A_{4,1} + A_{4,2}.
\end{split}
\end{equation}

{\bf \mb Step 5: Computation of $\partial_t X \cdot \xi_0$.\mn}
We expand \eqref{eq:dtXgamma} as
\begin{equation}
\label{eq:dtXgammabis}
\partial_{t}X^i = f'(d^2)\partial_{t}d^2 \, \partial_s \gamma(\zeta)^i
+ f(d^2) \, \big[ \partial_{ss}\gamma(\zeta)^i \, \partial_{t}\zeta + 
\partial_{ts}\gamma(\zeta)\big]. 
\end{equation}
Therefore, {\it at the point} $(x_0,t_0),$ we obtain from \eqref{eq:strongbfbis}, \eqref{eq:sigma4} and
\eqref{eq:dtdsquaredpoint}
\begin{equation}\label{eq:dtXdotV}
\partial_t X \cdot \xi_0 =: B = B_1 + B_2 + B_3, 
\end{equation} 
where
\begin{equation}\label{eq:notsobad}
\begin{aligned}
B_1 
&:= -2  f'(d^2) ((x - \gamma(\zeta))\cdot (\partial_s \gamma(\zeta) \times
\partial_{ss}\gamma(\zeta))) \, ( \partial_s\gamma(\zeta) \cdot \xi_0)
\, , 
\\
B_2
&:= \tfrac 1 {\rho} f(d^2)\,  ((x-\gamma(\zeta))\cdot (\partial_s\gamma(\zeta)\times
\partial_{sss}\gamma(\zeta))) \,( \partial_{ss}\gamma(\zeta) \cdot \xi_0)\, ,
\\
B_3 
&:= f(d^2)  \, (\partial_s\gamma(\zeta) \times \partial_{sss}\gamma(\zeta))\cdot \xi_0\, . 
\end{aligned}
\end{equation}

{\bf \mb Step 6: Proof of Proposition \ref{prop:core} completed.\mn} 
We write, {\it at the point} $(x_0,t_0),$ 
\begin{equation}\label{eq:rassemble}\begin{split}
\bigl| B - A \bigr| &= \bigl|\partial_t X \cdot \xi_0 - D({\rm curl} X) : (\xi_0 \otimes \xi_0)  \bigr| \\
& \leq |A_1| + |A_{2,1}| + |A_{2,2} - B_1| +  |A_3 - B_3| + |A_{4,1}| + |A_{4,2}| + |B_2|,
\end{split}\end{equation}
and we will estimate each of the terms in the last line separately. 
We first observe the following elementary facts that hold at the point $(x_0,t_0)$ (when they involve functions):
$$
\begin{array}{lll}
a) &\ds |\xi_0^\perp| \, , \, |\xi_0|\, ,\, |\partial_s\gamma(\zeta)| \leq 1,  &(\text{indeed } \xi_0\in S^2 \text{ and
}\eqref{eq:arc} \text{ holds}), \\
b) &\ds |f'(d^2)| \leq 3/r^2,\ |f''(d^2)|\leq 6/r^4,  & (\text{this follows from the definition of }f),\\
c) &\ds \rho\geq 1/2, |1-1/\rho|\leq d/r, |1 - 1/\rho^2|\leq 3d/r, &(\text{from }\eqref{eq:proche1} \text{ and
the definition \eqref{eq:ibm2} of }\rho). 
\end{array}
$$
For convenience, set $\Sigma = \|\partial_{sss}\gamma\|_{L^\infty(J\times \T^1)}.$
Taking into account \eqref{eq:ortho1}, direct inspection yields  
\begin{equation}\label{eq:premieres}
|A_1| \le {24}d^2r^{-4}, \qquad |A_{2,1}| \le 6 d r^{-3} |\xi_0^\perp|, \qquad 
|A_{4,1}| \le \frac12 r^{-2}|\xi_0^\perp|^2,
\end{equation}
as well as
\begin{equation}\label{eq:secondes}
|A_{4,2}| \le  4 dr^{-1} \Sigma |\xi_0^\perp|,
\quad\quad 
|B_2| \le dr^{-1}\Sigma |\xi_0^\perp|.
\end{equation}
Next, we write  
\begin{equation}\label{eq:tierces}\begin{split}
| B_1 - A_{2,2}|  &= \big| \tfrac 2 \rho f'(d^2) (\partial_s\gamma(\zeta)\cdot \xi_0) 
((x-\gamma(\zeta))\times \partial_{ss}\gamma(\zeta) )\cdot
 ( \partial_s\gamma(\zeta) - \tfrac{\xi_0}{\rho} )\big|\\
&\le 6 dr^{-3} \big(|\partial_s\gamma(\zeta) - \xi_0| + dr^{-1}\big),
\end{split}\end{equation}
and
\begin{equation}\label{eq:quartes}\begin{split}
|B_3 - A_3| &=  \big| f(d^2)[(\partial_s\gamma(\zeta)\times \partial_{sss}\gamma(\zeta))\cdot \xi_0]( 1
 - \tfrac 1 {\rho^2}\xi_0\cdot \partial_s\gamma(\zeta))
\big|\\
&\le \Sigma |\xi_0^\perp| \ \big( (1- \partial_s\gamma(\zeta)\cdot \xi_0) +
dr^{-1}\big)\\
&\le \Sigma  \big( (1- \partial_s\gamma(\zeta)\cdot \xi_0) + dr^{-1}|\xi_0^\perp|\big).
\end{split}\end{equation}
It remains to bound $d$, $|\xi_0^\perp|$, $|\partial_s\gamma(\zeta) - \xi_0|$ and $|1- \partial_s\gamma(\zeta)\cdot \xi_0|$
in terms of $1-X_{\gamma,r}\cdot \xi_0.$ To that purpose, first recall from the definition of $f$, from the fact that
$|\xi_0|=|\partial_s\gamma(\zeta)| = 1$, and form the assumption $d\leq r$, that 
\begin{equation}\label{eq:dominun}
1 - X_{\gamma,r} \cdot \xi_0 \geq  1- f(d^2) \geq d^2r^{-2}.
\end{equation}
Also, if $X_{\gamma,r}\cdot \xi_0\geq 0$ then $1-X_{\gamma,r}\cdot \xi_0 \geq 1-\partial_s\gamma(\zeta)\cdot \xi_0,$ and if $X_{\gamma,r}\cdot \xi_0 <0$ then
$1-X_{\gamma,r}\cdot \xi_0 \geq 1 \geq (1-\partial_s\gamma(\zeta)\cdot \xi_0)/2.$ In any case, we have
\begin{equation}\label{eq:domindeux}
1 - X_{\gamma,r} \cdot \xi_0 \geq  \frac{1}{2} \left( 1 - \partial_s\gamma(\zeta)\cdot \xi_0\right).
\end{equation}
Finally, by Hilbert's projection theorem 
\begin{equation}\label{eq:domintrois}
|\xi_0^\perp|^2 \le |\xi_0 - \partial_s\gamma(\zeta)|^2 = 2\big(1 - \partial_s\gamma(\zeta)\cdot
\xi_0\big) \leq 4 \big(1-X_{\gamma,r}\cdot \xi_0\big).
\end{equation}
Inserting \eqref{eq:dominun}, \eqref{eq:domindeux}, or \eqref{eq:domintrois} in
\eqref{eq:premieres}-\eqref{eq:quartes}, and writing $x\preceq y$ for $x\leq y(1-X_{\gamma,r}\cdot \xi_0)$, we obtain
$$
|A_1|\preceq \frac{24}{r^2},\quad |A_{2,1}|\preceq \frac{12}{r^2},\quad |A_{4,1}|\preceq \frac{2}{r^2},
\quad |A_{4,2}|\preceq 8\Sigma,
$$
$$|B_2|\preceq 2\Sigma,\quad |B_1-A_{2,2}|\preceq
\frac{16}{r^2},\quad |B_3-A_3|\preceq 4\Sigma,  
$$
and summation according to \eqref{eq:rassemble} yields the claim.
\end{proof}

\medskip

\noindent{\bf Proof of Theorem \ref{thm:stability}.} Since the map 
$t\mapsto \|V_t\|$ is bounded on $J$, since $f$ is of class
$\mathcal{C}^2$ and since $D({\rm curl}(X_{\gamma,r}))$ is a continuous function, we infer
from Definition \ref{def:genbf} that the function $G$ is Lipschitz on $J$ and that 
$$
\frac{d}{dt}G(t) = -\frac{d}{dt}\int X_{\gamma,r}\cdot \xi \, dV_t =  - \int  \partial_t
X_{\gamma,r} \cdot \xi - D({\rm curl} X_{\gamma,r}) : (\xi \otimes \xi) \, dV_t.
$$
The conclusion follows directly from Proposition \ref{prop:core}.\qed

\medskip

\noindent{\bf Proof of Theorem \ref{thm:wsuniqueness}.}
Let $t\mapsto T_t$ be a weak binormal curvature flow on $J$ with initial datum $T_{\gamma,0}.$ By Theorem 
\ref{thm:stability} and Gronwall inequality, we infer that $G$ and $F$ vanish identically on any 
compact subinterval of $J$ containing $0$,  and therefore vanish on $J.$ Fix $t\in J.$ Since 
$X_{\gamma,r}(x,t)\cdot \xi = 0$ if and 
only if $x = \gamma(t,s)$ for some $s\in \T^1$ and $\xi=\partial_s\gamma(t,s),$ we deduce from the identity $F(t)=0$
that for $\mathcal{H}^1$-a.e. $x$ in the geometrical support of $T_t$ we have $x\in \gamma_t.$ It follows from the
Federer-Fleming constancy theorem \cite{Fed} 4.1.31 that $T_t = aT_{\gamma,t}$ for some $a\in \Z,$ and then from the
identity $F(t)=0$ that $a=1.$ \qed
    
\section{Additional results, examples and open questions}\label{sect:5}

\subsection{Control of average speed and conserved quantities.}\label{S5.1}

In general, the convergence stated in Proposition \ref{prop:fermeture} or Corollary \ref{cor:fermeture}, and involved in the
construction of a solution in Theorem \ref{thm:existence}, does not imply that there is no mass loss at infinity,
and it could be that $\|V_t\|$ is not constant in time. In the following, we present a sufficient condition
to rule out this possibility, and we deduce conservation of momentum and angular momentum in that case.  

\begin{defi}\label{def:paraweak}
A weak binormal curvature flow $(T_t)_{t\in I}$ is called almost parametric if there exists a sequence of binormal
curvature flows $(T_{\gamma^n,t})_{t\in I}$, $n\in \N$, associated to smooth solutions $(\gamma^n)_{n\in \N}$ 
of \eqref{eq:strongbfbis} according to Proposition \ref{prop:consistency}, such that
$$
\|T_0\| = \lim_{n\to +\infty} \|T_{\gamma^n,0}\|
$$
and
$$
T_{\gamma^n,t} \rightharpoonup T_t \qquad \text{in}\qquad \mathcal{D}'(\R^3,\R^3)
$$
for all $t\in I.$  
\end{defi} 

\begin{rem}\label{rem:ahbon}
i) It follows from Proposition \ref{prop:fermeture} that given any current $T_0$ associated to a Lipschitz 
function $\gamma_0 : \R/\ell\mathbb{Z} \to \R^3$ by the formula
$$
T_0(X) := \int_0^\ell X(\gamma_0(s))\cdot \partial_s\gamma_0(s)\, ds,\qquad\forall \ X\in \mathcal{D}(\R^3,\R^3),
$$
there exists an almost parametric binormal curvature flow with initial datum $T_0.$ \\
ii) From the convergence $T_{\gamma^n,t} \rightharpoonup T_t$ it follows that $T_t$ is compactly supported for
every $t\in I.$
\end{rem}

For a smooth solution $\gamma\: :\: I\times \R/\ell \Z \to \R^3$ of \eqref{eq:strongbfbis}, the momentum 
$P(\gamma(t,\cdot))$ and the angular momentum  $Q(\gamma(t,\cdot))$ defined respectively by 
$$
\begin{array}{ll}
& \displaystyle P(\gamma(t,\cdot)) := \int_0^\ell \gamma(t,s) \times \partial_s\gamma(t,s)\, ds,\\
& \displaystyle Q(\gamma(t,\cdot)) := \int_0^\ell \gamma(t,s) \times \Big(\gamma(t,s) \times
\partial_s\gamma(t,s)\Big)\, ds,
\end{array}
$$ 
are independent of time. 

Notice that 
$$
\begin{array}{l}
\displaystyle P(\gamma(t,\cdot))=(T_{\gamma,t}(X_1), T_{\gamma,t}(X_2),T_{\gamma,t}(X_3)),\\
\displaystyle Q(\gamma(t,\cdot))=(T_{\gamma,t}(Y_1), T_{\gamma,t}(Y_2), T_{\gamma,t}(Y_3)),
\end{array}
$$
where
the vector fields $X_1,X_2,X_3$ and $Y_1,Y_2,Y_3$ on $\R^3$ are given by $X_1:=(0,-x_3,x_2),$
$X_2:=(x_3,0,-x_1)$, $X_3:=(-x_2,x_1,0),$    
$Y_{1}:= (-x_2^2-x_3^2,x_1x_2,x_1x_3),$ $Y_{2}:=(x_1x_2,-x_1^2-x_3^2,x_2x_3),$ and
$Y_{3}:=(x_1x_3,x_2x_3,-x_1^2-x_2^2).$

\begin{defi}
Let $T$ be a compactly supported integral 1-current without boundary in $\R^3.$ The momentum of $T$,
denoted by $P(T)$, and the angular moment of $T$, denoted by $Q(T),$ are the vectors in $\R^3$ defined
by $P(T):=(T(X_1),T(X_2),T(X_3))$ and  $Q(T):=(T(Y_1),T(Y_2),T(Y_3)).$
\end{defi}

The sufficient condition which we rely on amounts to non vanishing of the momentum. 

\begin{prop}\label{prop:finitespeed}
Let $(T_t)_{t\in I}$ be an almost parametric binormal curvature flow on $I$ with initial datum $T_0,$ and assume that $P(T_0)\neq 0.$ There exist
a universal constant $C>0$ such that for every $t\in I$, either ${\rm supp}(T_t)$ remains at a distance at most 
$2\|T_0\|$ of ${\rm supp}(T_0)$ or
$$
{\rm supp}(T_t) \subseteq {\rm supp}(T_0) + B(V_0|t|,0),
$$
where
$$
V_0 := C \frac{\|T_0\|^3}{P(T_0)^2}.
$$
\end{prop}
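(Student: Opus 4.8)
The plan is to reduce to the approximating smooth flows and to bound the drift of the curve's center of mass. By Definition \ref{def:paraweak}, write $(T_t)$ as a limit of binormal curvature flows $(T_{\gamma^n,t})$ attached to smooth closed curves $\gamma^n : I\times(\R/\ell_n\Z)\to\R^3$, with $\ell_n=\|T_{\gamma^n,0}\|\to\|T_0\|$. Since the initial supports converge to the compact set $\mathrm{supp}(T_0)$ they are uniformly bounded, so testing the linear fields $X_1,X_2,X_3$ against a common cutoff gives $P(\gamma^n(0,\cdot))\to P(T_0)$; in particular $P(\gamma^n(0,\cdot))\neq 0$ for $n$ large. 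It therefore suffices to establish the bound, with constants depending only on $\ell_n$ and $|P(\gamma^n(0,\cdot))|$, for a single smooth closed curve, and then to pass to the limit using the pointwise-in-$t$ weak convergence $T_{\gamma^n,t}\rightharpoonup T_t$, which transfers a uniform support inclusion for the $T_{\gamma^n,t}$ to $T_t$.

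For a single smooth solution $\gamma_t$, two conservation laws are available: the length $\ell$ (from \eqref{eq:arc}) and the momentum $P$, both constant in time. The first ingredient is purely geometric: a connected closed curve of length $\ell$ has diameter at most $\ell/2$, so every point of $\mathrm{supp}(T_{\gamma,t})$ lies within $\ell/2$ of the center of mass $c(t):=\tfrac1\ell\int_0^\ell\gamma(t,s)\,ds$, and likewise at $t=0$. Consequently $\mathrm{dist}\bigl(\mathrm{supp}(T_{\gamma,t}),\mathrm{supp}(T_{\gamma,0})\bigr)\le |c(t)-c(0)|+\ell$, which already yields the first alternative of the dichotomy (when the drift term is small) and reduces everything to a bound on the displacement $|c(t)-c(0)|$ of the center of mass, i.e. the ``average speed'' of the section title.

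The crux is then to show $|c(t)-c(0)|\le V_0|t|$. Differentiating and using \eqref{eq:strongbfbis} gives $\ell\,\dot c=\int_0^\ell \partial_s\gamma\times\partial_{ss}\gamma\,ds=:v$. The difficulty is that $v$, i.e. the instantaneous speed, is \emph{not} controlled by $\ell$ and $P$ alone: localized high-curvature arcs (small fast loops) can make $\int_0^\ell|\partial_{ss}\gamma|\,ds$ and hence $|v|$ arbitrarily large, so the naive estimate $|v|\le\int\kappa\,ds$ is useless, and a genuine cancellation must be extracted. I would project onto the momentum direction $e:=P/|P|$: one checks that $P\cdot e$ equals twice the signed area enclosed by the projection of $\gamma_t$ onto the plane $e^{\perp}$, so this area is the conserved quantity $|P|/2$, while $v\cdot e=\int_0^\ell(\partial_s\gamma\times\partial_{ss}\gamma)\cdot e\,ds$ is a (weighted) total turning of the same projected curve. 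The worst-case turning compatible with a fixed length $\ell$ and a fixed enclosed area $|P|/2$ is governed by an isoperimetric-type inequality, and it is precisely this constraint that forces the two powers of $|P|$ into the denominator and produces $|\dot c|\le C\ell^{3}/|P|^{2}=V_0$. Making this cancellation quantitative, uniformly over directions and in a way that survives the localized fast features, is the main obstacle; conservation of momentum, read as a fixed impulse/area, is the essential tool that keeps the bulk of the curve from drifting faster than $V_0$.

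Finally I would reassemble $\mathrm{dist}\le V_0|t|+\ell$ into the stated dichotomy, absorbing the additive length into either the $2\|T_0\|$ alternative or a harmless adjustment of the universal constant in $V_0$, and pass to the limit $n\to\infty$ using the weak convergence of the currents together with $\ell_n\to\|T_0\|$ and $P(\gamma^n(0,\cdot))\to P(T_0)$.
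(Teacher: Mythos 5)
Your reduction to the approximating smooth flows, the convergence of lengths and momenta, and the confinement of a closed curve of length $\ell$ within distance $\ell/2$ of its center of mass $c(t)$ are all sound, and the differentiation $\ell\,\dot c(t)=\int_0^\ell \partial_s\gamma\times\partial_{ss}\gamma\,ds$ is correct. But the heart of your argument --- the instantaneous bound $|\dot c(t)|\leq C\ell^3/|P|^2$ --- is asserted, not proved, and you yourself flag it as ``the main obstacle''. This is a genuine gap, and the sketch offered does not close it: while it is true that $P\cdot e$ is twice the algebraic area of the projection of $\gamma_t$ onto $e^\perp$, the quantity $v\cdot e=\int_0^\ell(\partial_s\gamma\times\partial_{ss}\gamma)\cdot e\,ds$ is the signed curvature of the projected curve weighted by the squared length $|\pi_{e^\perp}\partial_s\gamma|^2$ of the projected tangent; for a genuinely three-dimensional curve this is \emph{not} $2\pi$ times a turning number, so the planar mechanism (topological turning plus the isoperimetric inequality linking length, algebraic area and winding, which does work for plane curves) does not carry over, and no quantitative substitute is provided. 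Whether such a pointwise-in-time speed bound even holds for all smooth solutions of \eqref{eq:strongbfbis} is delicate --- localized high-curvature features make $\int\kappa\,ds$ uncontrollable, as you note --- and nothing in the paper establishes it; indeed the paper only ever claims, and only ever needs, an \emph{average} speed bound.

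The paper's actual proof avoids all curvature analysis. Given $t$ with ${\rm supp}(T_t)$ farther than $2\|T_0\|$ from ${\rm supp}(T_0)$, one fixes $a\in{\rm supp}(T_0)$, $b\in{\rm supp}(T_t)$, chooses $i$ with $|T_0(X_i)|\geq |P(T_0)|/\sqrt 3$, and tests against $X(x)=\chi(\|x-a\|)X_i(x-a)-\chi(\|x-b\|)X_i(x-b)$, where $\chi$ cuts off at scale $\|T_0\|$; since each support lies in a ball of radius $\|T_0\|/2$ about any of its points, $X$ agrees with $\pm X_i$ (suitably translated) on the two supports, so conservation of momentum gives $|T_0(X)-T_t(X)|=2|P_i(T_0)|$, while Proposition \ref{prop:estimee} gives $|T_0(X)-T_t(X)|\leq 4C|t|^{1/2}\|T_0\|$. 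Hence each displacement of the support by $2\|T_0\|$ costs a time at least $P(T_0)^2/(12C^2\|T_0\|^2)$, and summing over such subintervals yields $V_0\sim \|T_0\|^3/P(T_0)^2$: the two powers of $|P|$ come from squaring the H\"older-$\frac12$ flat-metric estimate, not from an isoperimetric inequality. So your approach, if your key inequality could be proved, would give a strictly stronger (instantaneous) statement than the proposition; as written, however, the decisive step is missing and the proof is incomplete, whereas the soft argument via Proposition \ref{prop:estimee} and momentum conservation delivers the stated average-speed dichotomy directly.
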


\begin{cor}\label{cor:conserved}
Let $(T_t)_{t\in I}$ be an almost parametric binormal curvature flow on $I$ with initial datum $T_0,$ and assume 
$P(T_0)\neq 0.$ Then the momentum $P(T_t)$ and the angular momentum $Q(T_t)$ are independent of time. 
\end{cor}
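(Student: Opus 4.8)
The plan is to deduce the conservation laws for the limit flow by passing to the limit in the exact conservation laws available for the smooth approximating flows. By the definition of an almost parametric flow there are smooth solutions $\gamma^n$ of \eqref{eq:strongbfbis} with $T_{\gamma^n,t}\rightharpoonup T_t$ in $\mathcal{D}'(\R^3,\R^3)$ for every $t$ and with $\|T_{\gamma^n,0}\|\to\|T_0\|$. For each $n$ the momentum and angular momentum of $\gamma^n(t,\cdot)$ are independent of $t$, i.e.\ $P(T_{\gamma^n,t})=P(T_{\gamma^n,0})$ and $Q(T_{\gamma^n,t})=Q(T_{\gamma^n,0})$ for all $t$. (At the level of the weak formulation \eqref{eq:cle} this reflects the algebraic facts that $\mathrm{curl}\,X_i$ is constant and $\mathrm{curl}\,Y_i$ is linear with antisymmetric gradient, so that $D(\mathrm{curl}\,X_i):\xi\otimes\xi$ and $D(\mathrm{curl}\,Y_i):\xi\otimes\xi$ both vanish identically.) The only genuine obstruction to passing directly to the limit is that the fields $X_i$ and $Y_i$ grow linearly and quadratically, hence are not admissible test fields for mere $\mathcal{D}'$-convergence.

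To remove this obstruction I would invoke finite speed of propagation in a form uniform in $n$. Each smooth flow $T_{\gamma^n,\cdot}$ is itself (trivially) an almost parametric flow, so Proposition \ref{prop:finitespeed} applies to it and, as soon as $P(T_{\gamma^n,0})\neq 0$, bounds $\mathrm{supp}(T_{\gamma^n,t})$ in terms of $\mathrm{supp}(T_{\gamma^n,0})$ and the speed $V_0^n=C\|T_{\gamma^n,0}\|^3/P(T_{\gamma^n,0})^2$. Granting a uniform bound on the initial supports $\mathrm{supp}(T_{\gamma^n,0})$, the momenta $P(T_{\gamma^n,0})$ converge to $P(T_0)\neq 0$ and are therefore bounded away from $0$, so that $V_0^n$ is bounded uniformly in $n$; consequently, on any compact subinterval $J\subset I$ containing $0$ there is a radius $R_J$, independent of $n$, with
$$
\mathrm{supp}(T_{\gamma^n,t})\subseteq B(0,R_J)\quad\text{and}\quad\mathrm{supp}(T_t)\subseteq B(0,R_J)\qquad\text{for all }t\in J,\ n\in\N,
$$
the inclusion for $T_t$ coming from Proposition \ref{prop:finitespeed} itself. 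Fixing a cutoff $\chi\in\mathcal{D}(\R^3,\R)$ with $\chi\equiv 1$ on $B(0,R_J)$, the compactly supported fields $\tilde X_i:=\chi X_i$ and $\tilde Y_i:=\chi Y_i$ coincide with $X_i$ and $Y_i$ on all the relevant supports, whence $T_{\gamma^n,t}(X_i)=T_{\gamma^n,t}(\tilde X_i)\to T_t(\tilde X_i)=T_t(X_i)$ by weak convergence, and similarly for $Y_i$. In other words $P(T_{\gamma^n,t})\to P(T_t)$ and $Q(T_{\gamma^n,t})\to Q(T_t)$ for every $t\in J$.

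Combining the two ingredients, for an arbitrary $t\in I$ I choose a compact $J\ni 0,t$ and write
$$
P(T_t)=\lim_{n\to\infty}P(T_{\gamma^n,t})=\lim_{n\to\infty}P(T_{\gamma^n,0})=P(T_0),
$$
where the outer equalities are the limit passage (applied at times $t$ and $0$) and the middle equality is the conservation for the smooth flow; the identical chain with $Q$ gives $Q(T_t)=Q(T_0)$. Since $t$ was arbitrary, both $P(T_t)$ and $Q(T_t)$ are constant on $I$.

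The main obstacle is precisely this uniform support control. Everything hinges on upgrading the pointwise weak convergence into convergence tested against the non-compactly supported fields $X_i,Y_i$, which demands a bound on $\mathrm{supp}(T_{\gamma^n,t})$ uniform in the approximation index $n$ and locally uniform in $t$; the crux is the uniform bound on the initial supports $\mathrm{supp}(T_{\gamma^n,0})$, which one must read off from the construction underlying Proposition \ref{prop:finitespeed} and which then feeds, through $P(T_{\gamma^n,0})\to P(T_0)$, into the uniform finite-speed bound. It is exactly here that the hypothesis $P(T_0)\neq 0$ is indispensable, since it is what keeps the propagation speed $V_0^n$ from blowing up.
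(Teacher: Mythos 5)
Your argument is correct and is essentially the paper's own proof: the authors likewise pass the smooth-flow conservation of $P$ and $Q$ through the pointwise-in-time convergence in $\mathcal{D}'(\R^3,\R^3)$, testing against cut-offs of $X_1,X_2,X_3$ and $Y_1,Y_2,Y_3$ placed far enough out that the cut-off does not occur on the supports of $T_t$ and $T_{\gamma^n,t}$. The uniform support control you extract from Proposition \ref{prop:finitespeed} --- with $P(T_0)\neq 0$ keeping the speeds $V_0^n$ bounded via $P(T_{\gamma^n,0})\to P(T_0)$ --- is precisely what the paper's one-sentence proof leaves implicit, so your more explicit treatment of that point is a faithful elaboration rather than a different route.
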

 
\begin{rem}Up to gradient vector fields, the family $\{X_1,X_2,X_3,Y_1,Y_2,Y_3\}$ is maximal for smooth 
globally defined and linearly independent vector fields such that $D({\rm curl}(X))$ is pointwise an 
anti-symmetric matrix. In particular, there are no other ``first order'' invariants of this form. 
In contrast, smooth binormal curvature flows are known to possess
infinitely many higher order invariants (see Hasimoto \cite{Has}).   
\end{rem}

Concerning Proposition \ref{prop:finitespeed}, notice that a circle of radius $\eps>0$ gives rise to a 
traveling wave solution of \eqref{eq:strongbfbis} 
with speed $1/\eps.$ On the other hand, the current associated to such a solution (given an orientation) 
has a mass equal to $2\pi\eps$ and a momentum equal to $\pi\eps^2$. This
shows that the upper bound on the speed given by Proposition \ref{prop:finitespeed}, except for the value of $C$, is
in some sense optimal. Actually, even a curve of length of order one but small momentum may travel at
a very large speed, as the example provided by the ``bullet'' 
$\gamma_0(s):=(\frac{1}{n}\cos(ns),\frac{1}{n}\sin(ns),0)$ for $s\in \R/2\pi\Z,$ shows. In that case, the 
associated current $T_{0}$ has mass $\|T_0\|=2\pi,$ its momentum satisfies $|P(T_0)|=\frac{2\pi}{n},$ and
its speed is equal to $n.$ This suggests to raise the following: 

\begin{question}
Given a smooth solution $\gamma \: :\: \R \times \R/\ell\Z \to \R^3$ of \eqref{eq:strongbfbis} such that
the image of $\gamma(0,\cdot)$ is not entirely contained in any ball of radius $r>0.$ Is it possible to
bound its average speed (i.e. similar to the statement of Proposition \ref{prop:finitespeed}) by a function
$V_0$ depending only on $r$?  
\end{question}

\noindent{\bf Proof of Proposition \ref{prop:finitespeed}.} In view of Definition \ref{def:paraweak} and Remark
\ref{rem:ahbon} ii), it suffices to consider the case of a binormal curvature flow associated to a single smooth 
solution of \eqref{eq:strongbfbis}. Assume that ${\rm supp}(T_t)$ extends to a distance bigger than $2\|T_0\|$
from ${\rm supp}(T_0)$, fix arbitrary $a\in {\rm supp}(T_0)$ and $b\in {\rm supp}(T_t)$, and set
$$
X(x) := \chi(\|x-a\|)X_i(x-a) - \chi(\|x-b\|)X_i(x-b),
$$  
where $i\in \{1,2,3\}$ is chosen such that $|T_0(X_i)| \geq \frac{1}{\sqrt{3}}|P(T_0)|,$ and $\chi\::\:\
[0,\infty)\to [0,1]$ is a smooth cut-off function such that $\chi\equiv 1$ on $[0,\|T_0\|/2]$, $\chi \equiv 0$
outside $[0,\|T_0\|]$ and $\|\chi'\|_\infty \leq 3/\|T_0\|.$ By assumption and by construction, $X = X_i$ on 
${\rm supp}(T_0)$ and $X = -X_i$ on ${\rm supp}(T_t)$, so that 
$$
T_0(X) = P_i(T_0) \qquad \text{and} \qquad T_t(X) = -P_i(T_t) = - P_i(T_0),
$$   
where the last equality is a consequence of the conservation of momentum for smooth binormal curvature flows. On the other hand,
by Proposition \ref{prop:estimee}, we have
$$
|T_0(X) - T_t(X)| \leq C|t|^\frac12 \|T_0\| \|{\rm curl}(X)\|_\infty \leq 4C|t|^\frac12 \|T_0\|.
$$
Hence,
$$
|t| \geq \frac{P(T_0)^2}{12C^2\|T_0\|^2}.
$$
The conclusion follows by splitting the whole time interval according to subintervals on which ${\rm supp}(T_t)$ moves 
by a distance $2\|T_0\|$. \qed

\medskip
\noindent{\bf Proof of Corollary \ref{cor:conserved}.}
It suffices to use the conservation of $P$ and $Q$ at the level of the approximating smooth flows $\gamma^n$, 
to consider cut-offs of $X_1,X_2,X_3$ and $Y_1,Y_2,Y_3$ sufficiently far at infinity so that the cut-off 
does not occur on the supports of $T_t$ and $T_{\gamma^n,t}$, and to invoke pointwise in time convergence in
$\mathcal{D}'(\R^3,\R^3)$. \qed  

\subsection{Oscillations and Generalized binormal curvature flows}\label{S5.2}

The undercurrents associated to generalized binormal curvature flows, even when they can be identified
with smooth parametrized curves, need not be solutions of the classical binormal curvature flow equation
\eqref{eq:strongbfbis}. We present here a family of typical such examples, for which the speed is modified by a constant
multiplicative factor,  
and  we question about its occurrence as an almost parametrized flow according to Definition \ref{def:paraweak}.     

\medskip

\begin{prop}\label{prop:modifiedspeed}
Let  $\gamma\: : \: \R \times (\R/\ell\Z) \to \R^3$ be a smooth solution of \eqref{eq:strongbfbis}, for
some $\ell>0$, and let $(V_{\gamma,t})_{t\in\R}$ and $(T_{\gamma,t})_{t\in\R}$ denote
the associated generalized and weak binormal curvature flows, respectively, as described in
Proposition \ref{prop:consistency}.

Then for any $m > 1$ and any $a\in [ a_m, m]$, where $a_m := \frac 12 (\frac 3m - m)$, there exists a generalized
binormal curvature flow $(V^{m,a}_t)_{t\in \R}$ such that the associated undercurrents
are given by
\begin{equation}
T^{m,a}_t := T_{V^{m,a}_t} = T_{\gamma, at}
\label{eq:osc1}\end{equation}
and such that 
\begin{equation}
\mbox{ for every Borel $O\subset \R^3$, \ \ \ \ 
$V^{m,a}_t(O \times S^2) = m V_{\gamma, at}(O\times S^2)$.}
\label{extramass}\end{equation}

\end{prop}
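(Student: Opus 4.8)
The plan is to build $V^{m,a}_t$ by keeping the spatial support on the time-rescaled curve $\gamma(at,\cdot)$ while replacing, at each point, the Dirac mass $\delta_{\partial_s\gamma}$ in the $\xi$-variable by a fixed axially symmetric measure of total mass $m$ on $S^2$, rotated so that its symmetry axis is $\partial_s\gamma(at,s)$. Concretely, I would fix an orthonormal reference frame $(e_1,e_2,e_3)$ and seek a nonnegative measure $\sigma \in \mathcal{M}(S^2)$, symmetric about $e_1$ and depending only on $(m,a)$, with
$$\sigma(S^2)=m,\qquad \int_{S^2}\xi\,d\sigma(\xi)=e_1,\qquad \int_{S^2}\xi\otimes\xi\,d\sigma(\xi)=\tfrac{2a+m}{3}\,e_1\otimes e_1+\tfrac{m-a}{3}\,(e_2\otimes e_2+e_3\otimes e_3).$$
For $\tau:=\partial_s\gamma(at,s)\in S^2$, let $R=R(s,t)\in SO(3)$ be any rotation with $Re_1=\tau$; axial symmetry makes the pushforward $R_\#\sigma$ depend on $\tau$ alone, and I would set
$$V^{m,a}_t(\psi):=\int_0^\ell\int_{S^2}\psi\bigl(\gamma(at,s),\xi\bigr)\,d(R_\#\sigma)(\xi)\,ds.$$

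The derivation of the required moments is the crux. Writing out \eqref{eq:cle} for this ansatz, the left-hand side is $\frac{d}{dt}T_{\gamma,at}(X)=a\left.\frac{d}{d\tau}\right|_{\tau=at}T_{\gamma,\tau}(X)$, which by Lemma \ref{lem:weakform} equals $-a\int_0^\ell D(\mathrm{curl}\,X):\partial_s\gamma\otimes\partial_s\gamma\,ds$; the right-hand side is $-\int_0^\ell D(\mathrm{curl}\,X):\bigl(\int\xi\otimes\xi\,d(R_\#\sigma)\bigr)ds$. The key observation is that $D(\mathrm{curl}\,X)$ is pointwise trace-free, since $\mathrm{tr}\,D(\mathrm{curl}\,X)=\mathrm{div}\,\mathrm{curl}\,X=0$; hence the two sides agree for every $X$ precisely when the second moment of $R_\#\sigma$ equals $a\,\tau\otimes\tau$ up to an arbitrary additive multiple of the identity. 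Matching the trace, i.e. the total mass $m$, then pins this multiple to $\tfrac{m-a}{3}I$, giving exactly the second-moment prescription above; the first-moment and mass conditions encode \eqref{eq:osc1} and \eqref{extramass} respectively.

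It then remains to produce $\sigma$. By axial symmetry I would search among superpositions of uniform measures on latitude circles $\{\xi\cdot e_1=\cos\theta\}$; such a circle of mass $w$ contributes $w\cos\theta$ to the first moment and $w\cos^2\theta$ to the $e_1e_1$ second moment, while its transverse second moment is automatically $\tfrac12 w\sin^2\theta$ in each of $e_2,e_3$. The transverse prescription is therefore implied by the trace relation, and everything reduces to finding a nonnegative measure on $[-1,1]$ in the variable $c=\cos\theta$ with mass $m$, first moment $1$, and second moment $\tfrac{2a+m}{3}$ — equivalently, a probability measure on $[-1,1]$ with mean $1/m$ and second moment $\tfrac{2a+m}{3m}$. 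Such a measure exists, and can be taken supported on two points by Carath\'eodory applied to the convex hull of the curve $c\mapsto(c,c^2)$, if and only if its mean lies in $[-1,1]$, its variance is nonnegative, and its second moment is at most $1$. The variance condition $\tfrac{2a+m}{3m}\ge\tfrac1{m^2}$ rearranges to $a\ge\tfrac12(\tfrac3m-m)=a_m$, and the bound $\tfrac{2a+m}{3m}\le1$ to $a\le m$; the mean $1/m\in(0,1)$ holds automatically since $m>1$. This is precisely the stated range, and the endpoints correspond to the degenerate cases (a single latitude when $a=a_m$, the two poles when $a=m$).

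Finally I would record the verifications. Property \eqref{extramass} is immediate since the spatial marginal of $R_\#\sigma$ has total mass $m$; \eqref{eq:osc1} follows because the first moment of $R_\#\sigma$ is $Re_1=\partial_s\gamma(at,s)$, so $T_{V^{m,a}_t}=T_{\gamma,at}$, which is in particular an integral $1$-current without boundary as required by Definition \ref{def:genbf}; and the second-moment identity $\int\xi\otimes\xi\,d(R_\#\sigma)=a\,\tau\otimes\tau+\tfrac{m-a}{3}I$ yields the flow equation \eqref{eq:cle} once the trace-free term is discarded, the map $t\mapsto V^{m,a}_t(X\cdot\xi)=T_{\gamma,at}(X)$ being smooth and hence Lipschitz. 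The only genuinely delicate point is the moment-problem step: one must check that the solvability region of the truncated moment problem on $[-1,1]$ matches the interval $[a_m,m]$ exactly and that the resulting weights stay nonnegative, including at the two endpoints.
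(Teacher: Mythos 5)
Your proposal is correct and follows essentially the same route as the paper: you rescale time, keep the spatial support on $\gamma(at,\cdot)$, replace the Dirac fiber by an axially symmetric measure on $S^2$ with first moment $\tau$ and second moment $a\,\tau\otimes\tau+\tfrac{m-a}{3}\,Id$, and use that $D(\mathrm{curl}\,X)$ is trace-free to absorb the identity term — exactly the paper's mechanism, with the same optimal range $[a_m,m]$ emerging from positivity. The only (cosmetic) difference is that you solve the fiber moment problem abstractly via the convex hull of $c\mapsto(c,c^2)$ on $[-1,1]$, whereas the paper exhibits an explicit solution $(1+\beta)\mu[\xi_0,\alpha]+\beta\,\delta_{-\xi_0}$ (one latitude circle plus a Dirac at the antipode, with $\alpha=\frac{2a+3+m}{3(1+m)}$, $\beta=\frac{m\alpha-1}{1+\alpha}$), your endpoint cases $a=a_m$ and $a=m$ matching the paper's $\beta=0$ and $\alpha=1$ respectively.
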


Condition \eqref{extramass} can be thought of as asserting that these generalized solutions have ``mass $m>1$ per unit arclength". Heuristically, one may  think of the extra mass $m-1$ as corresponding to microscopic oscillations.  Note in particular that there exist generalized solutions with $a<0$ as soon as $m > \sqrt 3$.

\begin{proof}
We first show that for $m>1$ and $a\in  [ a_m, m]$, and for any
$\xi_0\in S^2$
there exists a measure $W^{m,a}[\xi_0]$ on $S^2$ such that 
\begin{equation}
\int_{S^2} \xi \ dW^{m,a}[\xi_0] = \xi_0, 
\quad\quad\quad
\int_{S^2}\xi\otimes \xi\ dW^{m,a}[\xi_0]  = a \, \xi_0\otimes \xi_0 + \frac {m-a}{3}\, Id,
\label{eq:boncurrent}\end{equation}
where $Id$ denotes the identity matrix.
Note that the second identity above implies that
\[
W^{m,a}[\xi_0] (S^2) = \int_{S^2} |\xi|^2 \ dW^{m,a}[\xi_0] = Tr \left(\int_{S^2}\xi\otimes \xi \ dW^{m,a}[\xi_0] 
\right) = m.
\]
In general 
measures $W^{m,a}[\xi_0]$ are of course not
uniquely determined by these moment conditions; the 
explicit examples we write down are chosen just for convenience.

For $\xi_0\in S^2$ and $\alpha\in (0,1],$ we define the sets
$$
S(\xi_0,\alpha) := \{\xi \in S^2\: :\: \xi\cdot \xi_0 = \alpha\},
$$
and the positive Radon measures $\mu[\xi_0,\alpha] \in \mathcal{M}(S^2)$ where
$$
\int_{S^2} f(\xi)\, d\mu[\xi_0,\alpha](\xi) := \frac{1}{\alpha}\barint_{S(\xi_0,\alpha)}
f(\xi)\,d\mathcal{H}^1(\xi) \qquad
\forall \: f\in \mathcal{C}(S^2,\R)
$$
if $\alpha < 1$ and $\mu[\xi_0,1] = \delta_{\xi_0}$ for $\alpha=1.$ 
For $\beta \ge 0$, further define 
\[
\mu[\xi_0,\alpha, \beta] = (1+\beta) \mu[\xi_0,\alpha] + \beta \delta_{-\xi_0}.
\]
One checks that for all $\alpha\in (0,1]$ and $\beta\ge 0$, 
\[
\int_{S^2} \xi \, d\mu[\xi_0,\alpha, \beta] = \xi_0 
\]
and 
\[
\int_{S^2} \xi \otimes \xi \, d\mu[\xi_0,\alpha, \beta] = \left[ (1+\beta) \frac {3\alpha^2-1}{2\alpha} + \beta \right] \xi_0\otimes \xi_0 + \frac {(1+\beta)(1-\alpha^2)}{2\alpha} Id. 
\]
Then a computation shows that 
$\mu[\xi_0,\alpha, \beta] $ satisfies the second identity in \eqref{eq:boncurrent}
if
\[
\alpha = \frac{ 2a+3+m}{3(1+m)}, \ \  \beta = \frac{m\alpha-1}{1+\alpha}.
\]
Note that since $\beta \ge 0$, we must have $\alpha \ge \frac 1m$, and clearly $\alpha\le 1$.
The requirement $\alpha\in [\frac 1m, 1]$ gives rise to 
the restriction $a\in [a_m, m]$.

Now define
\[
\int \psi(x, \xi) \,dV^{m,a}_t := \int_{\R/\ell\Z}\left(  \int_{S^2} \psi(\gamma(s), \xi) \, d W^{m,a}[\partial_s\gamma(at, s)] \right) ds.
\]
It then follows directly from \eqref{eq:boncurrent} that \eqref{extramass} is satisfied and that for every compactly supported vector field $X$,
\[
\int X \cdot\xi  \  dV^{m,a}_t  \ = \ 
\int X \cdot \xi \  dV_{\gamma, at} ,
\]
which just says that \eqref{eq:osc1} holds. In addition,
since $D({\rm curl}(X)) : Id \equiv 0$ for every $X$, we deduce from  \eqref{eq:boncurrent} and the definitions 
that
\[
\int D({\rm curl}(X)) : \xi \otimes \xi \  dV^{m,a}_t  \ = \ 
a \int D({\rm curl}(X)) : \xi \otimes \xi \  dV_{\gamma, at} .
\]
It follows from these last two identities and Proposition \ref{prop:consistency} that 
$(V^{m,a}_t)_{t\in \R}$ is a generalized binormal curvature flow.

\end{proof}

\begin{rem}
We remark that if $W^{m,a}[\xi_0]$ is any measure on $S^2$ satisfying \eqref{eq:boncurrent}, then
\[
1 = \int \xi_0 \cdot \xi \, dW^{m,a}[\xi_0] \le 
\left( \int (\xi_0 \cdot \xi)^2 \, dW^{m,a}[\xi_0] \  \int 1\  dW^{m,a}[\xi_0]\right)^{1/2}
= \sqrt{ (a + \frac{m-a}3) m}\ ,
\]
and it follows from this that $a\ge a_m$. Clearly $a\le m$,
so the restriction on the range of $a$ in \eqref{eq:boncurrent} is optimal.
In addition, if $a =  a_m$, then the above calculation implies that $\xi_0\cdot \xi$ is $W^{m,a}[\xi_0]$ a.e. constant, and from this one can check that $W^{m,a}[\xi_0]$ is supported on
$S(\xi_0,\alpha)$.
Thus the extremal case $a=a_m$
corresponds, heuristically, to microscopic oscillations whose tangents form a constant angle with the tangents of macroscopic smooth curves.
\label{rem:constantangle}\end{rem}

Varifolds with non trivial (i.e. not reduced to a single Dirac mass) dependence in $\xi$ are typically associated
to limits of wild oscillations. Indeed, the generalized binormal curvature flows described in the previous
proposition may be obtained as limits of smooth solutions of \eqref{eq:strongbfbis} (of course without the
mass convergence of the currents), at least in the case of the traveling circles
with $a= a_m$.

\begin{prop}\label{prop:cerclelent}
Let $\ell>0$ and $\gamma\: : \: \R \times (\R/\ell\Z) \to \R^3$ be a smooth solution of \eqref{eq:strongbfbis}
corresponding to a traveling circle at speed $\frac{2\pi}{\ell}.$ For every $m>1$,
there exists
a sequence $(\gamma_n)_{n\in\N}$, $\gamma_n\: :\: \R\times (\R/\ell_n\Z)\to \R^3$, of smooth solutions of
\eqref{eq:strongbfbis} such that $\ell_n \to m\ell$ and 
$$
T_{\gamma_n,t} \rightharpoonup T_{\gamma, a_m t} \qquad\text{in}\qquad \mathcal{D}'(\R^3,\R^3) 
$$ 
as $n\to +\infty,$ for all $t\in \R.$
\end{prop}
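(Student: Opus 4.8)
The plan is to realize the generalized flow $(V^{m,a_m}_t)$ of Proposition \ref{prop:modifiedspeed}, in the extremal case $a=a_m$, as a varifold limit of \emph{exact} smooth solutions, and then to read off the convergence of the currents from Proposition \ref{prop:fermeture}. The natural candidates are thin, tightly wound coils around the circle: for an integer $N$ and a tube radius $\rho>0$, consider the model curve $u\mapsto (r+\rho\cos Nu)(\cos u,\sin u,0)+\rho\sin(Nu)\,e_3$, where $r=\ell/2\pi$ and the circle is taken in the horizontal plane travelling along $e_3$. A direct computation shows that if one lets $N\to\infty$ with $\rho N\to r\sqrt{m^2-1}$, then the arclength speed tends to the constant $rm$, the total length tends to $2\pi r m=m\ell$, and the unit tangent makes the constant angle $\arccos(1/m)$ with the circle's tangent while its azimuthal phase winds uniformly. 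By Remark \ref{rem:constantangle} this is exactly the geometry attached to $a=a_m$, for which $\alpha=1/m$ and $\beta=0$.

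Since a bare coil is not itself a solution of \eqref{eq:strongbfbis}, the substantive step — and the main obstacle — is to produce genuine smooth closed solutions with this asymptotic shape. I would use the \emph{relative equilibria} of the flow, i.e. solutions moving by a rigid screw motion $\gamma(t,s)=R_{\Omega t}\gamma_0(s)+ct\,e_3$ (rotation about $e_3$ plus vertical translation), for which the binormal equation reduces to the profile ODE
\[
\Omega\,(e_3\times\gamma_0)+c\,e_3=\gamma_0'\times\gamma_0'',\qquad |\gamma_0'|\equiv1 .
\]
This ODE can be integrated by quadratures, its bounded orbits lie on tori of revolution, and for a dense set of parameters (a rationality condition on the two rotation numbers) the profile closes up; choosing parameters so that the torus has core radius $r$, tube radius $\rho_n\to0$, and winding number $N_n\to\infty$ with $\rho_nN_n\to r\sqrt{m^2-1}$ yields closed smooth solutions $\gamma_n\colon \R\times(\R/\ell_n\Z)\to\R^3$ with $\ell_n\to m\ell$ that are $C^1$-close to the model coils above. (This is why one obtains $\ell_n\to m\ell$ rather than $\ell_n=m\ell$.) Establishing the existence and the basic geometry of these toroidal, form-preserving filaments is the technical heart of the argument.

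Granting the family $(\gamma_n)$, the convergence is soft. The varifold mass equals the length, $\|V_{\gamma_n,t}\|=\ell_n\to m\ell$, so the masses are uniformly bounded. A fast-phase averaging argument (Riemann–Lebesgue over the winding variable) shows that the profiles converge as oriented varifolds, $V_{\gamma_n,0}\rightharpoonup V^{m,a_m}_0$, the inner average over the azimuthal phase reproducing precisely the uniform measure $\mu[\,\cdot\,,1/m]$ on $S(\,\cdot\,,1/m)$ appearing in \eqref{eq:boncurrent}. Because $V^{m,a_m}_0$ is invariant under all rotations about $e_3$, the rotation $R_{\Omega_n t}$ is invisible in the limit (uniform weak-$*$ convergence against the compact orbit $\{\psi\circ R_\theta\}_\theta$), regardless of whether $\Omega_n$ stays bounded. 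To control the vertical drift I would invoke Proposition \ref{prop:estimee}: the uniform mass bound gives $d_{\mathcal{F}^*}(T_{\gamma_n,t},T_{\gamma_n,0})\le C\ell_n^{1/2}|t|^{1/2}$, and since each current is flat-close to the bare circle at height $c_nt$, comparing two parallel circles forces $c_n$ to stay bounded; passing to a subsequence, $c_n\to c_*$.

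Thus, for this subsequence and every $t$, $V_{\gamma_n,t}$ converges to the constant-angle circle distribution translated vertically at speed $c_*$, and $V_{\gamma_n,t}\,dt\rightharpoonup V_t\,dt$. By Proposition \ref{prop:fermeture} the limit $(V_t)$ is a generalized binormal curvature flow and $T_{\gamma_n,t}=T_{V_{\gamma_n,t}}\rightharpoonup T_{V_t}$ for every $t$. It remains only to identify $c_*$: since $(V_t)$ is a generalized flow with second moment $a_m\,\tau\otimes\tau+\tfrac{m-a_m}{3}\,\mathrm{Id}$ along the circle, inserting this into \eqref{eq:cle} and using $D(\mathrm{curl}\,X):\mathrm{Id}\equiv0$ reduces the right-hand side to $a_m$ times that of the bare circle, which travels at its own speed $1/r$; hence $c_*=a_m/r$. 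Therefore $(V_t)=(V^{m,a_m}_t)$, its undercurrent is $T_{\gamma,a_mt}$, and $T_{\gamma_n,t}\rightharpoonup T_{\gamma,a_mt}$ for all $t$ (the whole sequence converging by uniqueness of the limit). The difficulty is concentrated entirely in the existence and geometry of the toroidal relative equilibria $\gamma_n$; once these are in hand, the compactness estimates and the generalized-flow equation do the rest.
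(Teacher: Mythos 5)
Your proposal takes essentially the same route as the paper: the paper also chooses the approximating solutions $\gamma_n$ to be exact traveling waves of Kida type --- small helices wrapped around the circle, realizing precisely the constant-angle geometry of Remark \ref{rem:constantangle} --- and identifies the limit with the generalized flow $(V^{m,a_m}_t)_{t\in\R}$ constructed in Proposition \ref{prop:modifiedspeed}. The step you correctly single out as the technical heart (existence and asymptotics of the closed screw-motion relative equilibria) is exactly what the paper outsources to Kida \cite{Kid} and to Section 8 of \cite{JeSm2}, while your averaging, drift-bound and speed-identification arguments supply details the paper's proof leaves implicit.
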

\begin{proof}
It turns out that one may actually even require the approximating solutions $\gamma_n$ to be exact traveling
wave solutions of \eqref{eq:strongbfbis}.  The latter have been extensively studied by Kida \cite{Kid} and the 
particular asymptotic required for the present proof (the $\gamma_n$ correspond to a curve with small helices wrapped around a circle) have been 
carefully detailed in \cite{JeSm2} Section 8.  In fact the
proof shows that the generalized binormal curvature flows associated to $\gamma_n$ converge to $(V^{m, a_m}_t)_{t\in \R}$, constructed in the proof of Proposition 
\ref{prop:modifiedspeed} above.
\end{proof}

\begin{question}
Given a smooth binormal curvature flow $\gamma:\: \R\times (\R/\ell \Z)\to \R^3$, and numbers $m>1$ and $a\in [a_m, m]$,  does there exist a sequence $\gamma_n\: :\: \R\times (\R/\ell_n\Z)\to \R^3$ of smooth solutions of \eqref{eq:strongbfbis} such that $\ell_n \to  m\ell $ and 
$T_{\gamma_n,t} \rightharpoonup T_{\gamma, a t}$ in the sense of distributions?
\end{question}

Even though one could expect strong instability for highly oscillatory  data, the numerics in fact tend to suggest that the answer could be 
positive, at least in the case $a=a_m$, and that corresponding choices of initial data for $\gamma_n$ would be obtained by wrapping helices of around 
the initial smooth curve $\gamma(0,\cdot)$, as is the case for the
construction in Proposition \ref{prop:cerclelent}.

\begin{rem}\label{rem:pathological}
One can use the generalized solutions of Proposition \ref{prop:modifiedspeed} to
create rather pathological examples.

For example, fix $m>1$, and let $a:\R\to [a_m, m]$ be a measurable function
that does not change sign and is a.e. bounded away from $0$.
Define $t(\tau)= \frac 1{a(\tau)} \int_0^\tau a(s) \ ds$, and let
$
V_\tau := V^{m, a(t)}_{t(\tau)}
$
for $V^{m,a}_t$ as constructed above. 
Then it is straightforward to verify
that $(V_\tau)_{\tau\in \R}$ is a generalized binormal curvature flow,
with associated undercurrents $(T_{\gamma, t(\tau)})_{\tau\in \R}$.
This illustrates quite dramatically the ill-posedness of 
the initial value problem for generalized binormal curvature flows, even if we impose
the condition that $t\mapsto V_t(\R^3\times S^2)$ is constant.

In a different direction, fix $m>1$, let $\rho: [a_m, m]\to [0,\infty)$ be a smooth function
such that $\int_{a_m}^m \rho(a) da = 1$, and define
\[
V_t = \int_{a_m}^m V^{m,a}_t  \rho(a) \ da.
\]
Then  $T_{V_0}  =  \int_{a_m}^m T_{V^{m,a}_0}  \rho(a) \ da = T_{\gamma, 0}$,
and it is easy to see that $(V_t)_{t\in \R}$ satisfies  \eqref{eq:cle} and has no boundary in the sense that 
$\int \nabla \psi \cdot \xi dV_t = 0$ for all $\psi\in C^\infty_c(\R^3)$.
But $V_t$ is not integral for times $t>0$, in the sense that the associated undercurrent
is not integral. Thus the balance law \eqref{eq:cle}  is not by itself enough to preserve integrality.
\end{rem}

\subsection{Numerical curiosities}

Our existence theory in Theorem \ref{thm:existence} allows  to consider
initial curves that have corners,  and in particular polygons. 
There are a number of open questions about the behavior of weak binormal curvature flows with polygonal initial data, many of which (uniqueness, loss of mass, ...) are special cases of more general open questions about almost parametrized weak binormal curvature flows.  
In order to possibly obtain some insight to these questions, we have performed numerical 
simulations according to an algorithm of Buttke \cite{But}, and we have observed some phenomena which we did not 
expect, which we believe are worth mentioning, and for which we have no
explanation\footnote{After all one cannot rule out a priori that the numerics are completely misleading, even if we
do not believe it is the case here.}  beyond obscure appeals to integrability (discovered for long by Hasimoto \cite{Has} for
\eqref{eq:strongbfbis}, but which is not well adapted to a non smooth setting).

\medskip
If $\gamma$ is a solution to \eqref{eq:strongbfbis}, the corresponding tangent vector $u:=\partial_s\gamma\: :\:
I\times (\R/\ell\Z)\to S^2$ satisfies 
the Schr\"odinger map equation 
\begin{equation}\label{eq:schrodimapici}
\partial_t u = u \times \partial_{ss} u.
\end{equation}
Buttke's algorithm simulates the binormal curvature flow equation \eqref{eq:strongbfbis} by the Crank-Nicolson type 
discretization
$$
\frac{u_n^{j+1}-u_n^j}{\Delta t} = \left( \frac{u_n^j + u_n^{j+1}}{2}\right) \times \left(
\frac{u_{n-1}^j+u_{n+1}^j}{2(\Delta x)^2}+\frac{u_{n-1}^{j+1}+u_{n+1}^{j+1}}{2(\Delta x)^2}  \right)
$$
of \eqref{eq:schrodimapici}, and numerical integration to recover $\gamma$ from $u.$ The implicit scheme for $u$ can be
resolved by a fixed point method if $\Delta t < \sigma (\Delta x)^2$ for some explicit $\sigma>0$; it has the advantage that the constraint
$|u_n^j|=1$, the mean $\sum_n u_n^j$, and the discrete squared $\dot H^1$ norm  $\sum_n |u_n^j-u^j_{n+1}|^2$ are conserved
quantities of the scheme.    

\medskip 

In the following pictures, we present the shape of the simulated solution at different 
(well chosen) times for a 5000 points discretization of a unit square parallel to the $xy$-plane as initial datum. 
\begin{center}
\includegraphics[width=6cm]{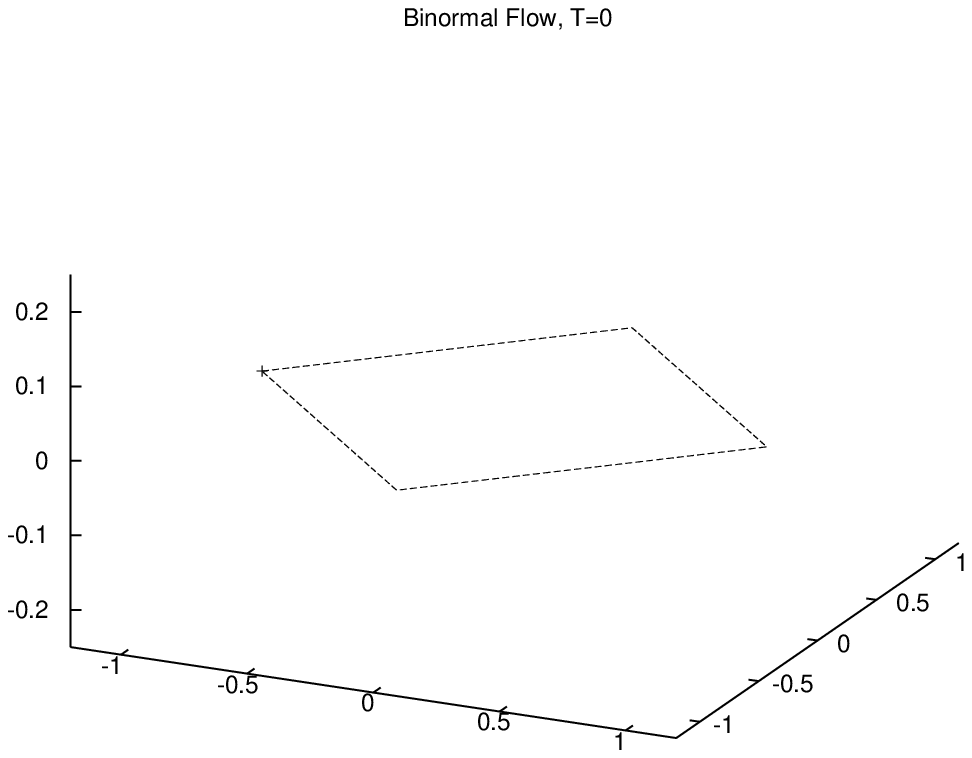} \ \includegraphics[width=6cm]{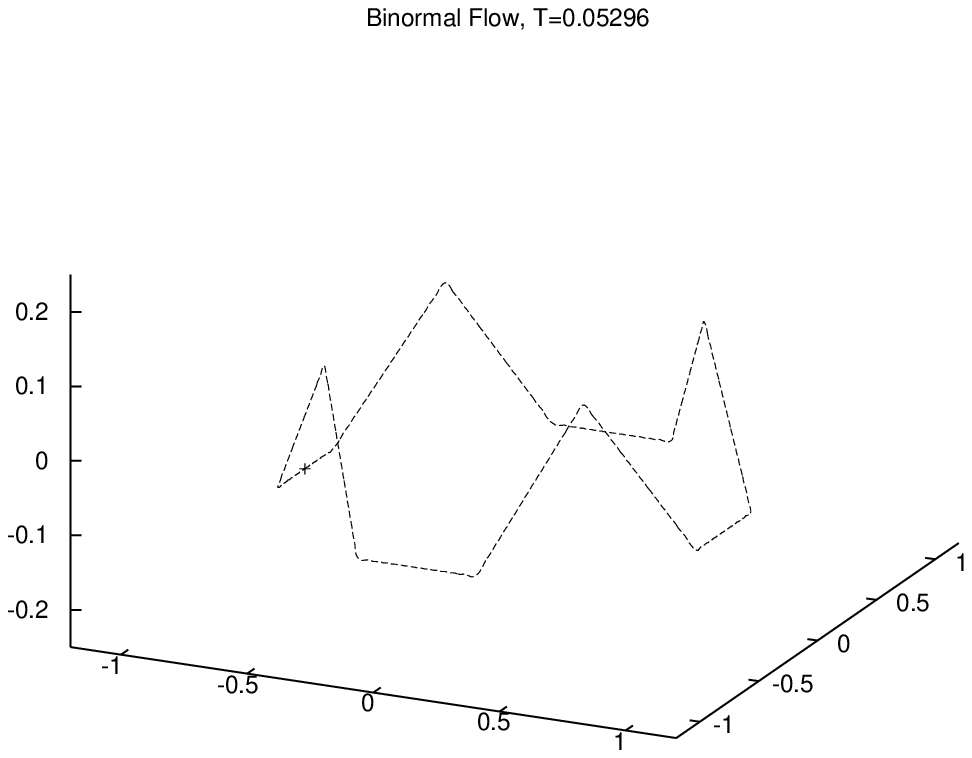}\\
\includegraphics[width=6cm]{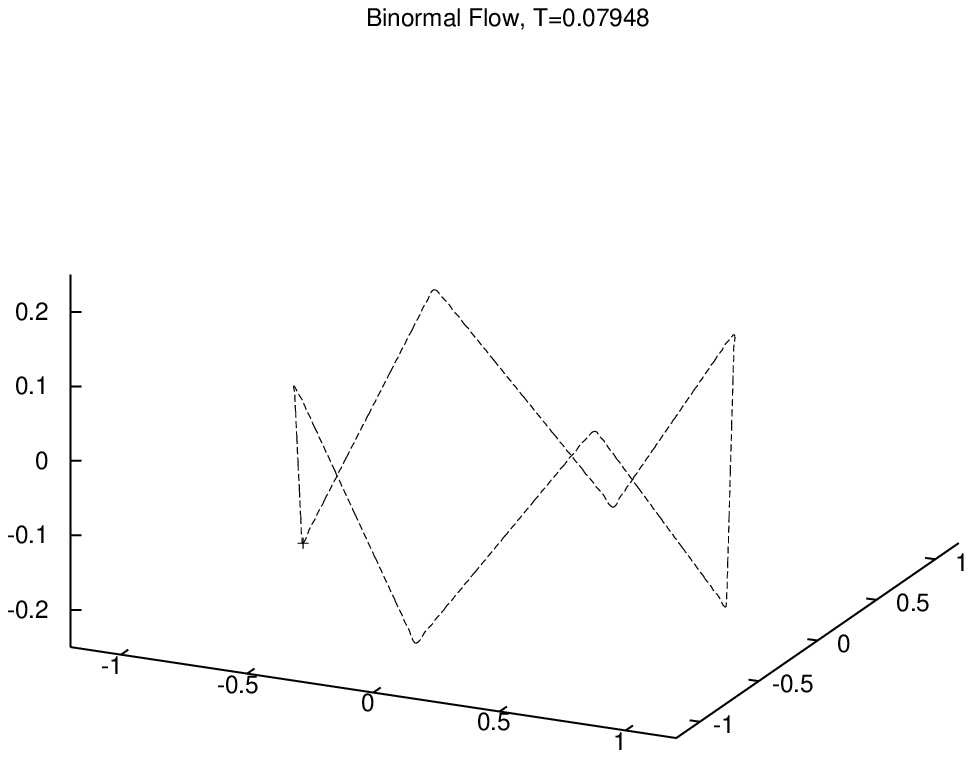} \ \includegraphics[width=6cm]{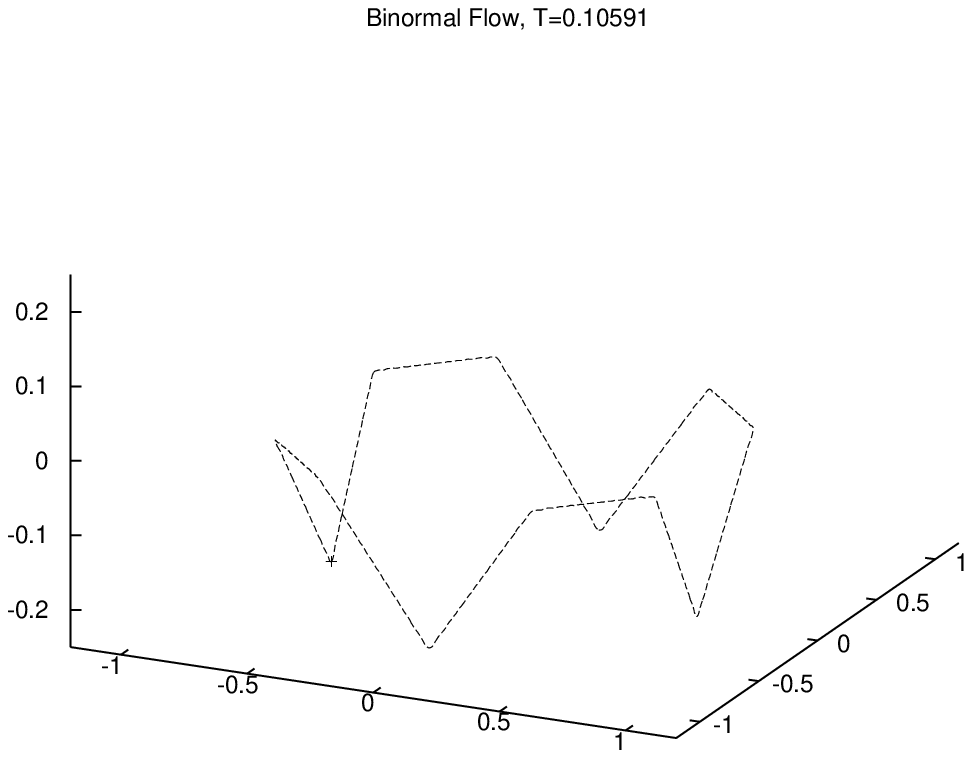}\\
\includegraphics[width=6cm]{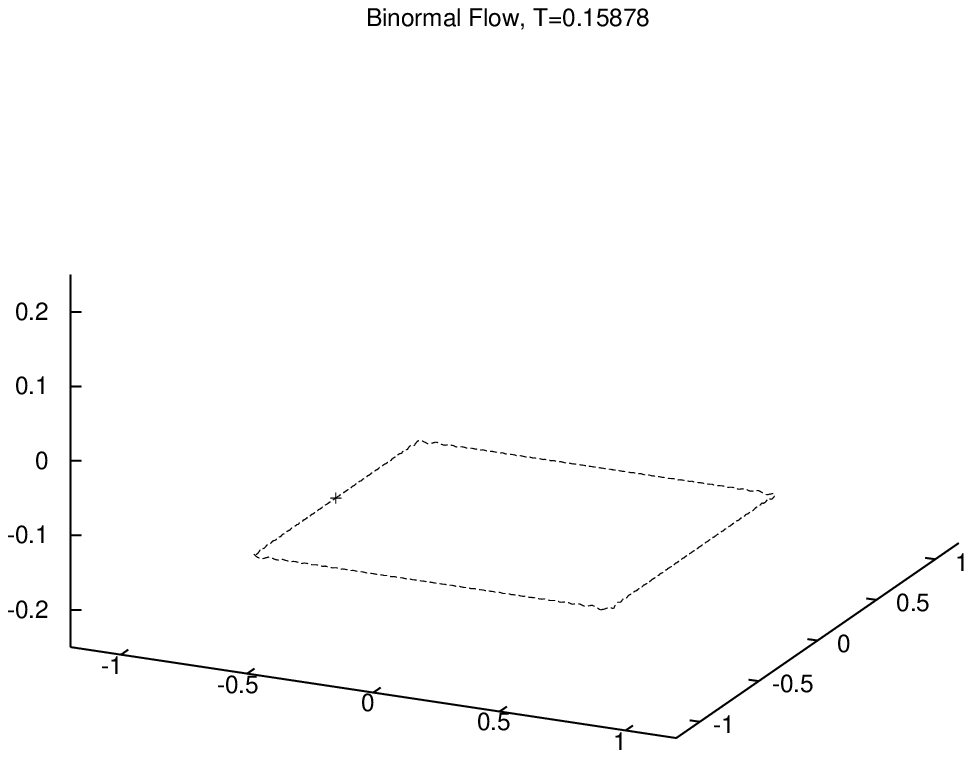}\\
\end{center}
As it may suggest, at some times 
close to 0.05296, 0.07948, 0.10591 and 0.15878, the (or ``a'') solution could become again polygonal. Notice that
the symmetries of the square are preserved (intermediate shapes have 8 or 12 sides), and that the square in the
last picture is rotated by $\pi/4$ with respect to the initial one. At times intermediate between those special
moments the simulated solution looks quite jerky and has not been represented. Also, running the simulation further in time suggests that
this sequence is reproduced in a (quasi)periodic manner. It is of course tempting to believe that solitons 
could play a role here (notice in particular the ratios of those special times); on the other hand polygons
are the worst possible examples for the Hasimoto transform (the solution is not smooth and the curvature vanishes
almost everywhere!).            

This kind of phenomena seems rather robust to some changes in the initial polygon, in particular for rectangles
or non planar initial data as the following ``half-cube'':
\begin{center}
\includegraphics[width=6cm]{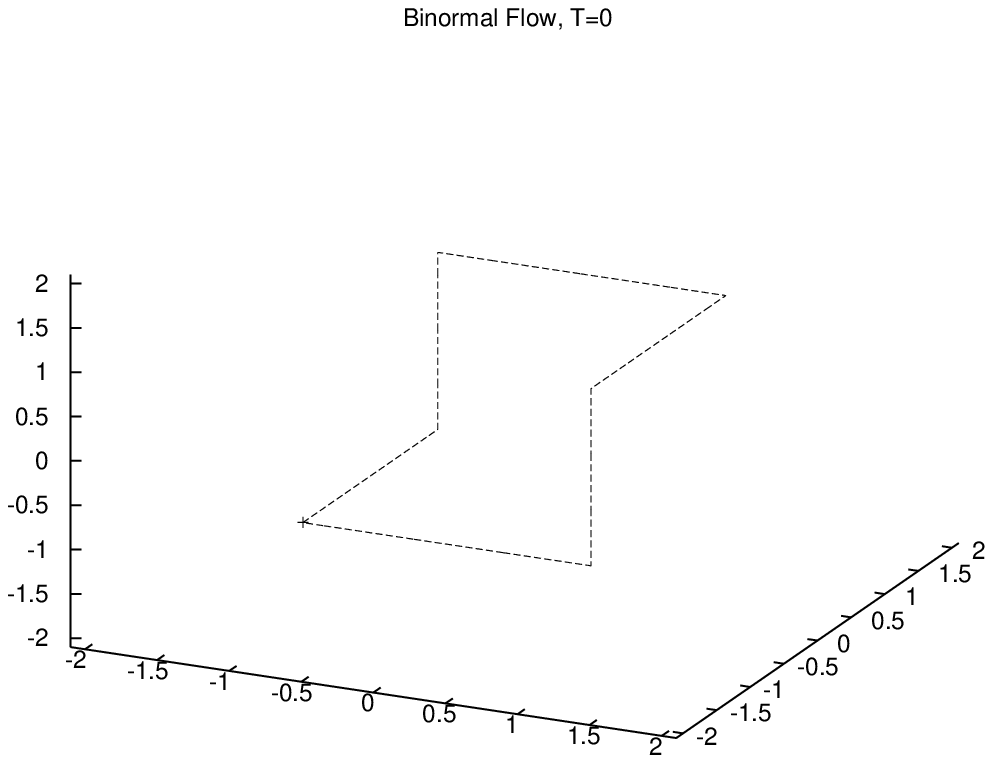}\qquad \includegraphics[width=6cm]{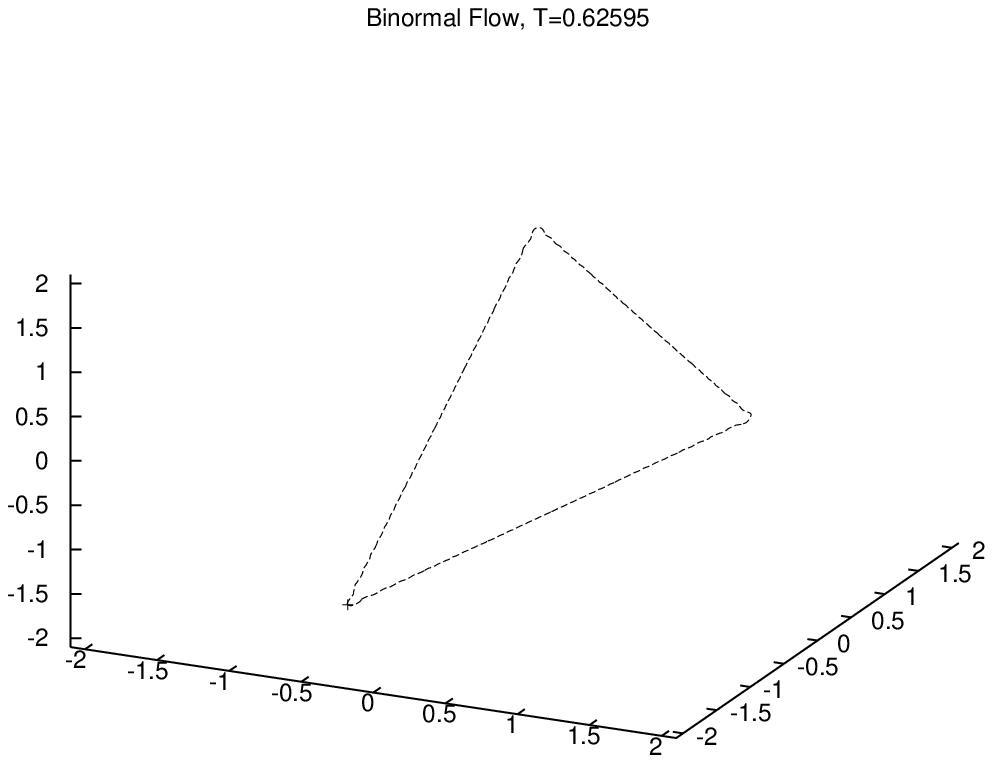}
\end{center}
(other additional times between $0$ and $0.62595$ seem to correspond to different non planar polygons (all with with 
the symmetries of the equilateral triangle), we have not included them in the picture because they are less 
distinctive on small size graphics). 

\begin{question}
Does there exist an almost parametric binormal curvature flow $(T_t)_{t\in \R}$ for which $T_t$ is the integral 1-current 
associated to an oriented polygon for at least two (and possibly an infinite sequence) of different times $t\in
\R.$ In case of positive answer, how to give an interpretation of those solutions in terms of the Hasimoto
transform and the cubic Schr\"odinger equation with Dirac masses ?     
\end{question}

\medskip

\noindent {\bf Addresses:}

\noindent
{\sc Robert L. Jerrard}: {University of Toronto, Department of Mathematics, Toronto, Ontario, M5S 2E4, Canada. E-mail: rjerrard@math.toronto.edu.

\noindent
{\sc Didier Smets}: UPMC, Universit\'e Paris 06, UMR 7598, Laboratoire Jacques-Louis Lions, BC 187, F-75252 Cedex 05, Paris, France. E-mail: smets@ann.jussieu.fr

 \end{document}